\newtheoremstyle{BBstyle0}  {}{}{\itshape}{}{\bfseries}{}{6pt}{}
\newtheoremstyle{BBstyle1}  {3pt}{3pt}{\rmfamily}{}{\itshape}{: }{3pt}{}
\newtheoremstyle{BBstyle2}  {3pt}{3pt}{\itshape}{}{\bfseries\large}{}{0pt}{}
\newtheoremstyle{BBstyle3}  {}{}{\itshape}{}{\bfseries}{: }{3pt}{}
\newtheoremstyle{BBstyle4}  {}{}{\rmfamily}{}{\bfseries}{}{6pt}{}
\newtheorem{thm}{Theorem}
\newtheorem{lem}{Lemma}
\newtheorem{lemma}{Lemma}
\newtheorem{prop}{Proposition}
\newtheorem{df}{Definition}
\newtheorem{rem}{Remark}
\theoremstyle{definition}
\newcommand{\norm}[1]{\left\|{#1}\right\|}
\newcommand{\essinf}{\mathop{\rm ess\ inf}}
\newcommand{\Card}{\mathop{\rm Card}\nolimits}
\newcommand{\E}{{\mathbb{E}}}
\newcommand{\R}{{\mathbb{R}}}
\newcommand{\cB}{{\mathcal{B}}}
\newcommand{\cE}{{\mathcal{E}}}
\newcommand{\cI}{{\mathcal{I}}}
\newcommand{\cJ}{{\mathcal{J}}}
\newcommand{\cL}{{\mathcal{L}}}
\newcommand{\cP}{{\mathcal{P}}}
\newcommand{\cQ}{{\mathcal{Q}}} 
\newcommand{\cR}{{\mathcal{R}}}
\newcommand{\cX}{{\mathcal{X}}}
\newcommand{\cZ}{{\mathcal{Z}}}
\newcommand{\theconvex}{C}
\newcommand{\indi}[1]{\mathds{1}_{#1}}
\DeclarePairedDelimiter{\abs}{\lvert}{\rvert}
\title{Factorization by extremal privacy mechanisms: new insights into efficiency}
\author{Chiara Amorino\thanks{ Universitat Pompeu Fabra and Barcelona School of Economics, Department of Economics and Business, Ram\'on Trias Fargas 25-27, 08005, Barcelona, Spain. The author gratefully acknowledges financial support of PID2022-138268NB-I00/AEI/10.13039/501100011033.} \qquad Arnaud Gloter \thanks{Laboratoire de Math\'ematiques et Mod\'elisation d'Evry, CNRS, Univ Evry, Universit\'e Paris-Saclay, 91037, Evry, France.}} 
\begin{document}
\maketitle
%\footnote{
%CA gratefully acknowledge financial support of ERC Consolidator Grant 815703 “STAMFORD: Statistical Methods for High Dimensional Diffusions”.

%$^{*}$Unit\'e de Recherche en Math\'ematiques, Universit\'e du Luxembourg%, %\url{chiara.amorino@uni.lu}. \\

%{$^\dagger$ }

%{$^\ddagger$ }}

\begin{abstract}
\noindent We study the problem of efficiency under $\alpha$-local differential privacy ($\alpha$-LDP) in both discrete and continuous settings. Building on a factorization lemma—which shows that any privacy mechanism can be decomposed into an extremal mechanism followed by additional randomization—we reduce the Fisher information maximization problem to a search over extremal mechanisms. The representation of extremal mechanisms requires working in infinite-dimensional spaces and invokes advanced tools from convex and functional analysis, such as Choquet’s theorem. Our analysis establishes matching upper and lower bounds on the Fisher information in the high-privacy regime ($\alpha \to 0$), and proves that the maximization problem always admits a solution for any $\alpha$.
As a concrete application, we consider the problem of estimating the parameter of a uniform distribution on $[0, \theta]$ under $\alpha$-LDP. Guided by our theoretical findings, we design an extremal mechanism that yields a consistent and asymptotically efficient estimator in high privacy regime. Numerical experiments confirm our theoretical results.
\\
\\
\noindent
 \textit{Keywords: local differential privacy, Fisher information, efficiency, staircase mechanism, Choquet's theorem} \\ 
 
\noindent
\textit{MSC2020 subject classifications: Primary 62F12, 68P27; secondary 62B15, 46A55}
\end{abstract}

\tableofcontents

\section{Introduction}{\label{s: intro}
With the generalization of large-scale data collection and the ever-increasing computational power of modern computers, statisticians now have access to unprecedented amounts of data. This revolution is not merely quantitative but also qualitative, as it fundamentally changes the way knowledge is built. Statisticians constructs models based on data arising from the natural and social sciences (e.g., physics, chemistry, sociology). However, this data is often sensitive. For example, several recent studies have relied on sensitive medical data, %\cite{Dub20, Jun21, TruOud22}, 
where particular care must be taken to ensure privacy, leading to a necessary trade-off between statistical utility and privacy protection.

Classical privacy-preserving techniques, such as data permutation and traditional anonymization procedures, have proven to be largely ineffective. In fact, numerous privacy breaches have been documented using these methods (see, for example, \cite{Nar08}). One of the most famous examples is the case of the Netflix Prize competition, where anonymized movie ratings were successfully de-anonymized, forcing Netflix to cancel its second competition in 2010 \cite{KeaRot19}. For a detailed and engaging account of this story, we also recommend the introduction of the PhD thesis \cite{Lal23}, which provides an in-depth, well-written, and slightly romanticized (but very enjoyable) description of the events.

In response to such challenges, researchers in statistics, databases, and computer science have worked to formalize privacy as a mathematical framework for limiting disclosure risks. This formalization aims to protect individual data while still allowing for the statistical analysis of aggregated information. The foundational work in this direction is that of Dwork et al. \cite{Dwo06}, who introduced the notion of global differential privacy (also known as central differential privacy). In this setting, a trusted aggregator has access to the entire dataset and produces privatized outputs—while the raw data remains hidden from external users. A practical example of this model would be a centralized data center that stores all user information but only releases privatized statistics.

Research on central privacy has since flourished, but alternative models have also emerged. Notably, local differential privacy (LDP) does not assume a trusted aggregator. Instead, each user privatizes their own data before sending it to a central aggregator. At first glance, local privacy may appear more desirable since it offers stronger privacy guarantees. However, this increased protection often comes at a significant cost to statistical utility, leading to lower-quality estimates compared to those obtained under central privacy.

In recent years, several important extensions and refinements of differential privacy have been developed. These include concentrated differential privacy \cite{BunSte16, DwoRot16}, pufferfish privacy \cite{puff}, and componentwise local differential privacy \cite{AmoGlo23}, among others.

Major technology companies such as Apple \cite{5LDP, 49LDP} and Google \cite{1LDP, 23LDP} have already adopted local differential privacy mechanisms, reflecting the significant real-world impact of privacy-preserving measures on billions of devices. As a result, understanding the fundamental trade-off between local differential privacy and statistical utility has become crucial, leading to a surge of academic research focused on statistical inference under privacy constraints.

A key feature of local differential privacy, as introduced in Section \ref{s: pb formulation and main} and specifically formalized in Equation \eqref{eq: def LDP}, is that the privacy level is directly controlled by the parameter $\alpha \in [0, \infty)$. When $\alpha = 0$, perfect privacy is achieved, while as $\alpha \to \infty$, the privacy constraint progressively disappears. A particularly active area of current research focuses on fixing a desired privacy level $\alpha$ and, among all random mechanisms that satisfy this privacy constraint, identifying those that optimize statistical utility according to a specified criterion. This is precisely the approach adopted in the present manuscript, which specifically aims to investigate privacy mechanisms that are optimal in the sense of maximizing the Fisher information.

The foundational steps in the study of statistical inference under local differential privacy constraints can be found in early works such as \cite{Martin} and \cite{WassermanZhou10}. Since then, the field has experienced rapid and extensive development, with an increasing number of contributions spanning various areas of statistical methodology. Recent advancements include investigations into hypothesis testing \cite{4CLDP, 29CLDP}, M-estimation \cite{Avella}, robustness \cite{31CLDP}, change-point detection \cite{6CLDP}, as well as estimation of means and medians \cite{Martin}. Moreover, significant progress has been made in nonparametric estimation \cite{10CLDP, 11CLDP, Kro24} and parametric inference \cite{AmoGloHal, Asi22, DucRog19, Jos19, KalSte}. Particularly within the parametric setting, considerable attention has been devoted to the pursuit of efficient estimators under privacy constraints—a central theme in recent research.

In the centralized (global) differential privacy paradigm, Smith’s early work \cite{Smi08, Smi11} laid the foundations for efficient estimation under privacy constraints. By contrast, in the local model, \cite{BarChe20} derived upper bounds on the Fisher information achievable under LDP but did not construct mechanisms that attain them. Nam and Lee \cite{NamLee22} then investigated the maximization of Fisher information in the LDP setting, albeit restricted to one-bit communication schemes.

More recently, Duchi and Ruan \cite{DucRua24} introduced the notion of $L_1$-information to precisely characterize the local minimax risk of regular parametric models under LDP, up to universal constants. The closest precursor to our work is the elegant treatment by Steinberger \cite{steinbergerEfficiencyLocalDifferent}, which establishes both the existence of asymptotically efficient locally private estimators and explicit constructions achieving minimum variance. A detailed comparison between his work and ours will be presented in the next subsection, after rigorously describing our findings.

\subsection{Our contribution}
Following the line described above, we pose the question: among all mechanisms in \(\mathcal{Q}_\alpha:= \{ q : q \text{ satisfies } \alpha\text{-LDP} \}\), which one maximizes the Fisher information?
Equivalently, given a regular parametric model \(\mathcal{P}\), the perturbed model \(q\circ\mathcal{P}\) remains regular (see \cite{steinbergerEfficiencyLocalDifferent} and recap in Section ~\ref{ss:Parametric models and Fisher information}), with Fisher information \(\mathcal{I}_{\theta_0}(q\circ\mathcal{P})\). Thus our core problem becomes looking for solutions of the optimization problem
\begin{equation}\label{eq: max info intro}
\sup_{q\in\mathcal{Q}_\alpha}\; \mathcal{I}_{\theta_0}\bigl(q\circ\mathcal{P}\bigr).
\end{equation}
Our factorization lemma (Lemma~\ref{lem : factorisation cas fini}) offers a new vantage point: it shows that any \(\alpha\)-LDP mechanism can be decomposed into two mechanisms, one of which is extremal—that is, it saturates the privacy constraint (see Eq.~\eqref{eq : extremal constrainte discret}). We also refer to these extremal mechanisms as staircase mechanisms.

Staircase mechanisms have become a central object of study in the finite-alphabet setting. Let us mention some key references on the topic. Geng and Viswanath \cite{GenVis12} introduced the one-dimensional staircase mechanism, showing both theoretically and empirically that its piecewise-constant noise outperforms the Laplace mechanism. They later extended it to \(d\) dimensions \cite{Gen15}, again proving superiority over Gaussian or Laplace noise for \(d\ge2\). More recently, \cite{Kul23} proposed a staircase-like scheme for one-dimensional mean estimation, while Kairouz et al.\ \cite{kairouzExtremalMechanismsLocal2015} formulated the privacy–utility trade-off as a constrained optimization and proved that, on finite spaces, the family of staircase mechanisms contains the exact optima for a wide class of information-theoretic utilities.

Leveraging our factorization lemma, we reduce the Fisher‐information maximization in \eqref{eq: max info intro} to searching over extremal (or staircase) mechanisms.  This additional structure lets us show that the optimum admits the closed form
\[
\max_{q\in\mathcal{Q}_\alpha}\mathcal{I}_{\theta_0}(q\circ\mathcal{P})
=(e^\alpha -1)^2\,M^*,
\]
where \(M^*\) is the solution to the linear program
\begin{equation*}
M^*=\max_{w\in\mathbb{R}_+^{2d}}w^\top i
\quad\text{s.t.}\quad R\,w=\mathbf1.
\end{equation*}
Here \(i\) is explicit and given in \eqref{eq : def i beta} in the main and \(R\) is the \(2^d\times d\) staircase‐pattern matrix (see Definition~\ref{def: staircase matrix}). This constrained optimization problem parallels the privacy–utility trade‐off of Kairouz et al.\ \cite{kairouzExtremalMechanismsLocal2015} (see their Eq.~(8) and Theorem 4). 

By establishing matching upper and lower bounds (Theorem~\ref{th : info cas fini}), we not only characterize \(M^*\) but also explicitly construct a staircase mechanism that attains it. Then, to address non discrete spaces,
%infinite-dimensional domains, 
we introduce a continuous analogue of the staircase mechanism (Section~\ref{ss: infinite}). Beyond our specific setting, we believe the continuous staircase mechanism holds broader interest, as its potential applications extend well beyond our Fisher information maximization problem and it opens the way for optimal LDP estimation in infinite domains.

 It is worth noting that  
 \cite{KalSte} analyzes the Fisher information in a continuous space \(\mathcal{X}\). However, his solution to the (continuous) optimization problem is obtained by approximating it through a discretization procedure, whereas our approach provides an exact continuous solution, without the need for any discrete approximation.

Extending our findings to the continuous case is a highly challenging task from a purely theoretical and mathematical perspective. It requires several advanced tools from functional and convex analysis, 
%as well as concepts from topology and set theory (such as Zorn's lemma), 
which make the proofs rather involved and interdisciplinary. %A deep understanding of several distinct mathematical topics is needed to rigorously carry out the analysis. 
The main tools of functional analysis used in the paper are recalled in Section \ref{S: tools}.

The key idea is as follows. In the finite case, proving the factorization lemma relies on the fact that every point in the hyperrectangle \([1, e^\alpha]^d\), where \(d = |\mathcal{X}|\), can be expressed as a convex combination of its extreme points \(\{1, e^\alpha\}^d\). This property is guaranteed by Carathéodory’s theorem, which applies because the hyperrectangle is a compact convex subset of \(\mathbb{R}^d\).

When extending this result to the continuous case, the hyperrectangle is replaced by the set
\[
\theconvex := \left\{ v : \mathcal{X} \rightarrow \mathbb{R} \ \text{measurable} \ \middle| \ 1 \leq v(x) \leq e^\alpha \ \text{a.e.} \right\} \subset L^\infty(\mathcal{X}, \mathbb{R}).
\]
The extremal points of the finite-dimensional hyperrectangle translate, in the continuous setting, to the set
\[
\mathcal{E} := \left\{ v : \mathcal{X} \rightarrow \mathbb{R} \ \text{measurable} \ \middle| \ v(x) \in \{1, e^\alpha\} \ \text{a.e.} \right\}.
\]
In this framework, Carathéodory’s theorem is replaced by Choquet’s theorem. To apply it, we equip \(\theconvex\) with the distance \(d_\star\), which metrizes the weak-\(\star\) topology, and we prove that \(\theconvex\) is convex and compact with respect to this topology.

Another significant challenge arises from the fact that \(\theconvex \subset L^\infty(\mathcal{X}, \mathbb{R})\). For our results, it is crucial to evaluate elements of \(\theconvex\) at specific points \(x \in \mathcal{X}\), which leads to the introduction of the evaluation operator. The key difficulty is that this operator is not continuous with respect to the weak-\(\star\) topology. Providing a rigorous interpretation of pointwise evaluation of integrals defined over the domain \(\mathcal{E}\) is therefore particularly delicate, as detailed in Subsection~\ref{ss:point eval}. The solution to this problem is provided in Proposition~\ref{p: evaluation}, whose proof is quite technical and relies on the introduction of Bochner integrals and on proving the separability of \(\theconvex\) when endowed with the weak-\(\star\) topology.

These results allow us to extend the proof of the factorization lemma to the continuous setting. Consequently, we are able to establish, even in the continuous %infinite-dimensional 
case, both upper and lower bounds for the Fisher information and these bounds coincide as \(\alpha \to 0\), which corresponds to the regime with a significant level of privacy. More precisely, we prove in Theorem \ref{thm : equivalent alpha petit ctn} that the best Fisher information under $\alpha$-LDP constraint in a regular model $(p_\theta)_\theta$ is equivalent to $\frac{\alpha^2}{4} \left( \int \abs{\dot{p}_{\theta}(x)} dx \right)^2$ as $\alpha\to0$.

More generally, we show that, without imposing any restriction on \(\alpha\), the maximization problem introduced in \eqref{eq: max info intro} admits a solution. Specifically, there exists a Radon sub-probability measure \(\bar{\mu}\) on \(\mathcal{E}\) such that the corresponding (continuous version of the) staircase privacy mechanism \(q^{(\bar{\mu})}\) satisfies
\[
\mathcal{I}_{\theta}(q^{(\bar{\mu})} \circ \mathcal{P}) = \max_{q \in \mathcal{Q}_\alpha} \mathcal{I}_{\theta}(q \circ \mathcal{P}),
\]
see Theorem~\ref{thm: main opt cont} for details.

It is important to emphasize that the proof of this result is far from trivial. Although the compactness of \(\theconvex\) in the weak-\(\star\) topology guarantees the existence of a solution in \(\theconvex\), the fact that \(\mathcal{E}\) is not closed prevents us from directly concluding that the solution is supported on \(\mathcal{E}\). Instead, the proof relies on more advanced techniques, combining convex analysis and set-theory arguments, %(in particular the application of Zorn's lemma), 
to establish the existence of an extremal solution supported on \(\mathcal{E}\).

It is also important to highlight that our theoretical framework provides valuable intuition about the structure of optimal mechanisms, which enables us to effectively apply our results even in models that are not regular. This is precisely the focus of Section~\ref{s: unif}, where we consider the case of a private variable following a uniform distribution on the interval \([0, \theta]\), with unknown \(\theta\). In this scenario, we naturally operate in the continuous setting, since \(\mathcal{X} = [0, \infty)\). The factorization lemma continues to apply in this context, allowing us to establish an upper bound on the Fisher information that closely resembles the one obtained for regular models. What is quite intriguing is that, even if we start from a model that is not regular, once privacy is enforced we effectively recover a model that becomes regular again (see Proposition \ref{prop : Fisher unif}).

In Section~\ref{Ss : efficient est unif}, we further assume the availability of a preliminary estimator of the parameter, which does not need to be particularly accurate or close to the true value (note that two-stage estimators are commonly employed in this setting, as illustrated, for instance, in \cite{KalSte} and \cite{steinbergerEfficiencyLocalDifferent}). Guided by the intuition developed through our earlier theoretical results, we introduce an \(\alpha\)-LDP extremal mechanism corresponding to a measure supported on two points in \(\mathcal{E}\). We show that, when the preliminary estimate is smaller than the true parameter value \(\theta_0\), the resulting estimator is consistent, asymptotically Gaussian, and, as \(\alpha \to 0\), its variance matches the previously derived upper bound on the Fisher information. This establishes the efficiency of the proposed estimator in this privacy regime.

We conclude our analysis with numerical experiments to assess the practical performance of the estimation procedure. The simulations confirm our theoretical findings: when the preliminary estimate exceeds \(\theta_0\), the estimator is significantly biased, while when the preliminary estimate is below \(\theta_0\), the estimator performs well and aligns with the theoretical predictions.

\vspace{0.3 cm}

The structure of the paper is as follows. In Section~\ref{s: pb formulation and main}, we rigorously introduce the notion of \(\alpha\)-LDP and formulate the estimation problem. Our main theoretical results are split between Section~\ref{ss: finite}, which presents the findings for the finite case, and Section~\ref{ss: infinite}, which addresses the  continuous setting. The application to the uniform model is developed in Section~\ref{s: unif}. Section~\ref{S: tools} provides the functional analysis tools required for the continuous framework. In Section~\ref{s: proof main}, we present the proofs of the main results, while the proofs of more technical supporting results are deferred to Section~\ref{s: proof technical}.

\section{Problem formulation}{\label{s: pb formulation and main}}
We begin by describing the mathematical formulation of the notion of privacy that we will work with. The process of privatizing raw data and transforming it into public data is modeled through a conditional distribution, known as a privacy mechanism or channel distribution. Let us now formalize this setup.

Let $\mathcal{X}$ and $\mathcal{Z}$ be two separable, complete metric spaces. Equipped with their respective Borel $\sigma$-algebras, they define the measurable spaces $(\mathcal{X}, \Sigma_X)$ and $(\mathcal{Z}, \Sigma_Z)$. In particular, $(\mathcal{X}, \Sigma_X)$ represents the space of sensitive (raw) data, while $(\mathcal{Z}, \Sigma_Z)$ corresponds to the space of privatized (public) data.

Let us formally describe the mechanism that transforms raw data into its public counterpart. Consider $X_1, \dots, X_n$, i.i.d. raw data, distributed as $X \in \mathcal{X}$. We begin by introducing the sequentially interactive privacy mechanism, in which the privatization of the $i$-th observation depends not only on the corresponding sensitive datum $X_i$, but also on the previously released public outputs $Z_1, \dots, Z_{i-1}$. This results in the following conditional independence structure:
$$
X_i, Z_1, \dots, Z_{i-1} \rightarrow Z_i, \qquad Z_i \perp X_k \mid X_i, Z_1, \dots, Z_{i-1} \quad \text{for } k \neq i.
$$
In other words, the privatized output $Z_i$ is drawn according to
$$
Z_i \sim Q_i(\cdot \mid X_i = x_i, Z_1 = z_1, \dots, Z_{i-1} = z_{i-1}),
$$
for a collection of Markov kernels $Q_i : \Sigma_Z \times \mathcal{X} \times \mathcal{Z}^{i-1} \rightarrow [0,1]$.

A line of research concentrates on a special case of the general mechanism described above: the non-interactive setting. In this setting, the value of the public variable $Z_i$ depends solely on the corresponding raw value $X_i$, and is independent of the previously generated variables $Z_1, \dots, Z_{i-1}$. Consequently, the Markov kernel $Q$ does not depend on $i$, and the same mechanism is applied uniformly across all observations. Formally, for each $i \in \{1, \dots, n\}$, the privatized output is given by
\begin{equation}{\label{eq: drawn Z nonint}}
 Z_i \sim Q(\cdot \mid X_i = x_i).   
\end{equation}
On the one hand, non-interactive mechanisms yield i.i.d. privatized samples, which are often easier to analyze and work with. On the other hand, interactive mechanisms can incorporate more information about the data and, as a result, sometimes allow for improved statistical performance compared to non-interactive approaches (see, e.g., \cite{ButKlo25} and \cite{Kro24}).

Later, we will search for privacy mechanisms that are optimal in a sense that will be extensively detailed. %We will see that certain mechanisms, such as the staircase mechanism (which will be discussed in detail below), play a central role in our analysis. 
To study such notions of optimality, it is crucial to quantify the level of privacy provided by a mechanism, allowing for meaningful comparisons between different mechanisms that guarantee the same degree of privacy.

Privacy is measured using the notion of local differential privacy. Given a parameter $\alpha \ge 0$, we say that a random variable $Z_i$ is an $\alpha$-locally differentially private version of $X_i$ if, for all $z_1, \dots, z_{i-1} \in \mathcal{Z}$ and all $x, x' \in \mathcal{X}$, the following holds:
$$
\sup_{A \in \Sigma_Z} \frac{Q_i(A \mid X_i = x, Z_1 = z_1, \dots, Z_{i-1} = z_{i-1})}{Q_i(A \mid X_i = x', Z_1 = z_1, \dots, Z_{i-1} = z_{i-1})} \le \exp(\alpha).
$$
We denote by $\mathcal{Q}_\alpha$ the set of all Markov kernels satisfying this local $\alpha$-differential privacy constraint.

As mentioned earlier, the parameter $\alpha$ quantifies the strength of privacy: the smaller the value, the harder it is to infer sensitive information from the released data. In particular, $\alpha = 0$ corresponds to perfect privacy, while letting $\alpha \to \infty$ gradually removes the privacy constraint. An analogous condition holds in the non-interactive case:
$$
\sup_{A \in \Sigma_Z} \frac{Q(A \mid X = x)}{Q(A \mid X = x')} \le \exp(\alpha).
$$
Under the local differential privacy condition, the kernels $Q(\cdot \mid X = x)$ are mutually absolutely continuous for all $x \in \mathcal{X}$. This allows us to assume the existence of a dominating measure $\mu$ on $(\mathcal{Z}, \Sigma_Z)$ with respect to which each kernel $Q$ admits a density, denoted $q_x(z)$. The $\alpha$-local differential privacy constraint can then be rewritten in terms of these densities as:
\begin{equation}{\label{eq: def LDP}}
\sup_{z \in \mathcal{Z}} \frac{q_x(z)}{q_{x'}(z)} \le \exp(\alpha), \quad \forall x, x' \in \mathcal{X}.
\end{equation}
From now on, we will write $q_x(z)$ to denote the density $q(z \mid X = x)$.

Finally, note that if $q_x(z) = 0$ for some $x \in \mathcal{X}$ and $z \in \mathcal{Z}$, then the $\alpha$-LDP constraint implies that $q_{x'}(z) = 0$ for all $x' \in \mathcal{X}$. In other words, such a value $z$ could be removed from $\mathcal{Z}$ without altering the randomization mechanism.

With this background in place, we can now present our main results, which crucially depend on whether the spaces under consideration are discrete or continuous. We begin with the finite setting in Section \ref{ss: finite}, as it allows us to fix the notation and highlight the key ideas in a simpler framework. We then extend our analysis in Section \ref{ss: infinite}, focusing on the scenario where the space $\mathcal{X}$ is an open subset of $\mathbb{R}^d$. %continuous. %infinite dimensional.

\section{Main results: Discrete case}{\label{ss: finite}}
Let us begin by considering the case where both $\mathcal{X}$ and $\mathcal{Z}$ are finite sets. Specifically, we take $\mathcal{X} = \{x_1, \dots, x_d\}$ and $\mathcal{Z} = \{z_1, \dots, z_l\}$.

In this setting, one of the most studied and widely used privacy mechanisms is the staircase mechanism. A key reference on this topic is \cite{kairouzExtremalMechanismsLocal2015}, where it is shown that, when $\mathcal{X}$ is finite, the optimal mechanism that maximizes utility functions expressible as sums of sublinear functions has an extremal structure.

Following \cite{kairouzExtremalMechanismsLocal2015}, we say that a randomization mechanism is extremal if, for all $z \in \mathcal{Z}$ and all $(x, x') \in \mathcal{X}^2$, the log-likelihood ratios satisfy
$$
\left| \ln \frac{q_{x'}(z)}{q_x(z)} \right| \in \{0, \alpha\},
$$
which is equivalent to the condition:
\begin{equation}\label{eq : extremal constrainte discret}
\forall z \in \mathcal{Z}, \ \forall (x,x') \in \mathcal{X}^2, \quad \frac{q_{x'}(z)}{q_x(z)} \in \{e^{-\alpha}, 1, e^\alpha\}.
\end{equation}
As before, if $q_x(z) = 0$ for some $z \in \mathcal{Z}$ and $x \in \mathcal{X}$, this constraint is interpreted as requiring $q_{x'}(z) = 0$ for all $x' \in \mathcal{X}$.

We refer to any mechanism satisfying this constraint as a staircase mechanism. In the context of global differential privacy, a similar definition can be extended to apply to all neighboring database queries $x, x'$ (or, equivalently, pairs differing within a given sensitivity), thus recovering many known optimal mechanisms. Notably, the mechanisms studied in \cite{GenVis12, GenVis13} and \cite{Ghosh12} are particular instances of the staircase mechanism defined in \eqref{eq : extremal constrainte discret}.

There are several reasons why staircase mechanisms deserve particular attention. First, as noted above, they arise naturally in the context of optimal mechanisms and have been extensively studied across different notions of privacy and utility. Second, extremal mechanisms saturate the $\alpha$-LDP constraint. Third, and most importantly for our purposes, one of our main results (see Lemma \ref{lem : factorisation cas fini}) shows that any privacy mechanism can be decomposed into the composition of two mechanisms, one of which is an extremal mechanism.

Given the central role staircase mechanisms play in our analysis, and the frequency with which we will refer to them throughout the paper, we now provide a more detailed description.

\begin{df}\label{def: staircase matrix}
For $\beta \in \{0, \dots, 2^d - 1\}$, let $d_j(\beta)$ denote the $j$-th component of the $d$-dimensional binary vector corresponding to the binary representation of $\beta$ in the dyadic basis, i.e., $\beta = \sum_{j=0}^{d-1} d_j(\beta) 2^j$, with $d_j(\beta) \in \{0, 1\}$.
A matrix is called a staircase pattern matrix if its $\beta$-th column is a vector $r_\beta \in \{1, e^\alpha\}^d$ for each $\beta \in \{0, \dots, 2^d - 1\}$, where the $j$-th component of $r_\beta$ is defined as
$(r_\beta)_j = \indi{\{d_j(\beta) = 0\}} + e^\alpha \, \indi{\{d_j(\beta) = 1\}}.$
Each column vector $r_\beta$ is called a staircase pattern.
\end{df}

To clarify the construction, consider the case $d = 2$. There are $2^d = 4$ staircase patterns, and the corresponding staircase pattern matrix is:
$$
\begin{bmatrix}
1 & 1 & e^\alpha & e^\alpha \\
1 & e^\alpha & 1 & e^\alpha
\end{bmatrix}.
$$
In general, for arbitrary $d$, the staircase patterns are given by:
\begin{equation}\label{eq : R matrix by slice}
	r_0=\begin{bmatrix} 1\\1\\ 1\\ \vdots\\ 1\end{bmatrix},~
	r_1=\begin{bmatrix} e^\alpha\\1 \\ 1\\ \vdots\\ 1\end{bmatrix},~
	r_2=\begin{bmatrix} 1 \\ e^\alpha\\ 1\\ \vdots\\ 1\end{bmatrix},~\dots~,~
	r_{2^d-1}=\begin{bmatrix} e^\alpha \\ e^\alpha\\ e^\alpha\\ \vdots\\ e^\alpha\end{bmatrix}.
\end{equation}
Equivalently, the vectors $(r_\beta)_{\beta \in \{0, \dots, 2^d - 1\}}$ are the extremal points of the hyper-rectangle $[1, e^\alpha]^d$.
We denote by $\mathcal{E} = \{0, \dots, 2^d - 1\}$ the index set of these extremal points ${r_0, \dots, r_{2^d - 1}}$.
Interpreting each $r_\beta$ as a function on $\mathcal{X}$, we write $r_\beta(x_j) = (r_\beta)_j$ for $x_j \in \mathcal{X}= \{ x_1, ... , x_d \}$. 

For each $\beta \in \{0, \dots, 2^d - 1\}$, we define the level sets:
\begin{equation}\label{eq: Fbeta+ 4.8}
F^+_\beta = \{ x \in \mathcal{X} \mid r_\beta(x) = e^\alpha \}, \quad
F^-_\beta = \{ x \in \mathcal{X} \mid r_\beta(x) = 1 \}.
\end{equation}
This implies that for all $x \in \mathcal{X}$, the function $r_\beta$ satisfies
\begin{equation}{\label{eq: r beta x}}
r_\beta(x) = e^\alpha \indi{F^+_\beta}(x) + \indi{F^-_\beta}(x) = 1 + (e^\alpha - 1) \indi{F^+_\beta}(x),
\end{equation}
i.e., the function $r_\beta$ takes values in $\{1, e^\alpha\}$, with the structure fully determined by $F^+_\beta$.

Note that as $\beta$ ranges from $0$ to $2^d - 1$, the sets $F^+_\beta$ range over all possible subsets of $\mathcal{X}$. For instance, $F^+_0 = \emptyset$ because $r_0 = (1, \dots, 1)^T$ by \eqref{eq : R matrix by slice}, and $F^+_{2^d - 1} = \mathcal{X}$ since $r_{2^d - 1} = (e^\alpha, \dots, e^\alpha)^T$.

\subsection{A factorization lemma}

We are now ready to present our first main result: the factorization lemma, which shows that any privacy mechanism can be decomposed into the composition of two mechanisms, one of which is extremal in the sense of Definition \ref{def: staircase matrix}.

In order to do so, let us introduce some notation. If $q^{(1)}: \mathcal{X} \to \mathcal{E}$ and $q^{(2)}: \mathcal{E} \to \mathcal{Z}$ are privacy mechanisms between finite sets, we denote their composition by $q = q^{(2)} \circ q^{(1)}$, representing the sequential application of the two randomization steps. The resulting mechanism satisfies:
$$
q_x(z) = \sum_{y \in \mathcal{E}} q^{(1)}_x(y) \, q^{(2)}_y(z).
$$
\begin{lemma}\label{lem : factorisation cas fini}
	Let $q$ be an $\alpha$-LDP mechanism between the finite spaces $\cX=\{x_1,\dots,x_d\}$ and $\cZ=\{z_1,\dots,z_l\}$. %There exists $\cY$ a finite space, 
	There exist $q^{(1)}$ an extremal mechanism from $\cX$ to $\cE$, and $q^{(2)}$ a randomization mechanism for $\cE$ to $\cZ$ such that $q=q^{(2)}\circ q^{(1)}$.
	
	Moreover, we have the representation $q^{(1)}_x(\beta)= \omega_\beta r_\beta(x)$, where
	$(\omega_\beta)_\beta$ defines a sub-probability measure on $\cE$, satisfying $e^{-\alpha} \le \sum_{\beta=0}^{2^d-1} \omega_{\beta}\le 1$.
\end{lemma}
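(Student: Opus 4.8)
The plan is to reconstruct the mechanism $q$ by first extracting, for each released symbol $z\in\cZ$, the "height profile" $x\mapsto q_x(z)$ and decomposing it along the staircase pattern vectors. Concretely, fix $z\in\cZ$. The $\alpha$-LDP constraint \eqref{eq: def LDP} says that the vector $(q_{x_1}(z),\dots,q_{x_d}(z))$ has all ratios in $[e^{-\alpha},e^\alpha]$, so after dividing by its minimal coordinate $m(z):=\min_j q_{x_j}(z)$ we obtain a vector lying in the hyper-rectangle $[1,e^\alpha]^d$. By Carathéodory's theorem (as the excerpt flags, this is the finite-dimensional instance of the later Choquet argument), every point of $[1,e^\alpha]^d$ is a convex combination of the extremal points $\{r_\beta\}_{\beta\in\cE}$; so there exist weights $\lambda_\beta(z)\ge 0$ with $\sum_\beta \lambda_\beta(z)=1$ and
\[
\frac{q_x(z)}{m(z)} \;=\; \sum_{\beta\in\cE}\lambda_\beta(z)\,r_\beta(x)\qquad\text{for all }x\in\cX.
\]
Multiplying back, $q_x(z)=\sum_\beta m(z)\lambda_\beta(z)\,r_\beta(x)$.

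Next I would define the candidate first-stage mechanism by aggregating over $z$: set $\omega_\beta := \sum_{z\in\cZ} m(z)\lambda_\beta(z)$ and $q^{(1)}_x(\beta):=\omega_\beta r_\beta(x)$, and define the second stage, for $\beta$ with $\omega_\beta>0$, by $q^{(2)}_\beta(z):=m(z)\lambda_\beta(z)/\omega_\beta$ (and arbitrarily, say uniformly on $\cZ$, when $\omega_\beta=0$). Then one checks directly that
\[
\sum_{\beta\in\cE} q^{(1)}_x(\beta)\,q^{(2)}_\beta(z)
=\sum_{\beta\in\cE} \omega_\beta r_\beta(x)\,\frac{m(z)\lambda_\beta(z)}{\omega_\beta}
=\sum_{\beta\in\cE} m(z)\lambda_\beta(z) r_\beta(x)
= q_x(z),
\]
so $q=q^{(2)}\circ q^{(1)}$. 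It is immediate that $q^{(2)}$ is a genuine Markov kernel: its entries are nonnegative and $\sum_{z} q^{(2)}_\beta(z)=\sum_z m(z)\lambda_\beta(z)/\omega_\beta=\omega_\beta/\omega_\beta=1$ by the definition of $\omega_\beta$. That $q^{(1)}$ is extremal in the sense of \eqref{eq : extremal constrainte discret} is built in: for each $\beta$, $q^{(1)}_x(\beta)/q^{(1)}_{x'}(\beta)=r_\beta(x)/r_\beta(x')\in\{e^{-\alpha},1,e^\alpha\}$ since $r_\beta(x),r_\beta(x')\in\{1,e^\alpha\}$ (with the convention handled when $\omega_\beta=0$, i.e.\ the column is identically zero).

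It then remains to verify the two normalization/privacy bookkeeping claims. First, $q^{(1)}$ is itself a privacy mechanism, i.e.\ $\sum_{\beta} q^{(1)}_x(\beta)=1$ for every $x$: indeed $\sum_\beta \omega_\beta r_\beta(x)=\sum_{z}m(z)\sum_\beta\lambda_\beta(z)r_\beta(x)=\sum_z m(z)\cdot\frac{q_x(z)}{m(z)}=\sum_z q_x(z)=1$ since $q$ is a mechanism — and note this computation is independent of $x$, which is the crucial consistency point. Second, the total mass bound $e^{-\alpha}\le\sum_\beta\omega_\beta\le 1$: evaluating $\sum_\beta q^{(1)}_x(\beta)=1$ at an $x$ where $r_\beta(x)=1$ for all $\beta$ is impossible in general, so instead I would argue that $1=\sum_\beta\omega_\beta r_\beta(x)$ and, since each $r_\beta(x)\in[1,e^\alpha]$, we get $\sum_\beta\omega_\beta\le \sum_\beta\omega_\beta r_\beta(x)=1$ and $\sum_\beta\omega_\beta r_\beta(x)\le e^\alpha\sum_\beta\omega_\beta$, whence $\sum_\beta\omega_\beta\ge e^{-\alpha}$. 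The main obstacle — and the only genuinely nontrivial point — is the application of Carathéodory at fixed $z$: one must be careful that the weights $\lambda_\beta(z)$ can be chosen measurably/consistently in $z$ (here trivial, since $\cZ$ is finite, so one just picks any decomposition for each of the finitely many $z$), and that the resulting $\omega_\beta$ do not depend on $x$ — which is exactly what the normalization identity $\sum_z q_x(z)=1$ guarantees. Everything else is routine verification of the Markov-kernel and extremality conditions.
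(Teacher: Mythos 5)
Your proof is correct and follows essentially the same route as the paper: normalize each column $q_\cdot(z)$ by its minimum $m(z)$, decompose the resulting point of $[1,e^\alpha]^d$ via Carath\'eodory into a convex combination $\sum_\beta \lambda_\beta(z)\,r_\beta$ of staircase vectors, aggregate over $z$ to define $\omega_\beta=\sum_z m(z)\lambda_\beta(z)$, and read off $q^{(1)}$ and $q^{(2)}$ with the same normalization and mass-bound checks. The only detail the paper spells out that you gloss over is the degenerate case $m(z)=0$ (where the $\alpha$-LDP constraint forces $q_x(z)\equiv 0$, so the division is vacuous and that $z$ contributes nothing to $\omega_\beta$), but this is purely cosmetic.
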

\begin{proof}
We define the minimal mass function
$$
\underline{q}(z) = \min_{x \in \cX} q_x(z).
$$
Note that, from the definition of $\alpha$-LDP in \eqref{eq: def LDP} and the following remark, it follows that if $\underline{q}(z) = 0$, then $q_x(z) = 0$ for all $x \in \cX$.

Let us now introduce a normalized version of the mechanism $q$. For each $(x, z) \in \cX \times \cZ$, define
$$
u_x(z) = 
\begin{cases}
\displaystyle \frac{q_x(z)}{\underline{q}(z)}, & \text{if } \underline{q}(z) > 0, \\
1, & \text{if } \underline{q}(z) = 0.
\end{cases}
$$
By construction and by the $\alpha$-LDP constraint in \eqref{eq: def LDP}, we have that for all $z \in \cZ$, the vector $u(z) := (u_x(z))_{x \in \cX} \in [1, e^\alpha]^d$, where we recall that $d = |\cX|$.

The hyperrectangle $[1, e^\alpha]^d \subset \mathbb{R}^d$ is compact and convex, so by the Carathéodory theorem, the vector $u(z)$ belongs to the convex hull of the extremal points of $[1, e^\alpha]^d$. Therefore, for all $z \in \cZ$, we may write
$$
u(z) = \sum_{\beta=0}^{2^d - 1} c_{z, \beta} r_\beta,
$$
where $c_{z, \beta} \ge 0$, $\sum_{\beta=0}^{2^d - 1} c_{z, \beta} = 1$, and the $r_\beta$'s are the staircase pattern vectors defined in \eqref{eq : R matrix by slice}. Accordingly, for all $x \in \cX$,
$$
u_x(z) = \sum_{\beta=0}^{2^d - 1} c_{z, \beta} r_\beta(x),
$$
and thus
\begin{equation} \label{eq : qxz convex hull}
q_x(z) = \underline{q}(z) u_x(z) = \sum_{\beta=0}^{2^d - 1} \underline{q}(z) c_{z, \beta} r_\beta(x).
\end{equation}
Define, for each $\beta \in \mathcal{E} = \{0, \dots, 2^d - 1\}$,
\begin{equation} \label{eq: def omega beta 4.5}
\omega_\beta := \sum_{z \in \cZ} \underline{q}(z) c_{z, \beta} \ge 0.
\end{equation}
Since $q_x(z)$ is a probability distribution over $\cZ$, we have $\sum_{z \in \cZ} q_x(z) = 1$, which yields
\begin{equation} \label{eq: constraint factorisation 4.55}
1 = \sum_{\beta=0}^{2^d - 1} \sum_{z \in \cZ} \underline{q}(z) c_{z, \beta} r_\beta(x) = \sum_{\beta=0}^{2^d - 1} \omega_\beta r_\beta(x).
\end{equation}
This suggests defining the first-stage mechanism $q^{(1)} : \cX \to \cE$ as
\begin{equation} \label{eq: cond q1 4.6}
q^{(1)}_x(\beta) := \omega_\beta r_\beta(x),
\end{equation}
which forms a probability distribution over $\cE = \{0, \dots, 2^d - 1\}$. Moreover, it satisfies the extremality constraint in \eqref{eq : extremal constrainte discret}, since
$$
\frac{q^{(1)}_x(\beta)}{q^{(1)}_{x'}(\beta)} = \frac{r_\beta(x)}{r_\beta(x')} \in \{e^{-\alpha}, 1, e^\alpha\},
$$
using the fact that each $r_\beta$ takes values in $\{1, e^\alpha\}$.
Now fix any $x \in \cX$. Since $r_\beta(x) \in \{1, e^\alpha\}$, we obtain the inequalities
\begin{equation}{\label{eq: mass omega beta}}
 1 = \sum_{\beta} \omega_\beta r_\beta(x) \le \sum_\beta \omega_\beta e^\alpha, \quad \text{and} \quad 1 = \sum_\beta \omega_\beta r_\beta(x) \ge \sum_\beta \omega_\beta,   
\end{equation}
implying that the total mass $\sum_\beta \omega_\beta$ lies in $[e^{-\alpha}, 1]$. Thus, $\omega$ is a sub-probability measure.

We now define the second-stage mechanism $q^{(2)} : \cE \to \cZ$. For all $\beta$ such that $\omega_\beta > 0$, set
$$
q^{(2)}_\beta(z) := \frac{\underline{q}(z) c_{z, \beta}}{\omega_\beta},
$$
which defines a probability distribution on $\cZ$, due to the normalization in \eqref{eq: def omega beta 4.5}. If instead $\omega_\beta = 0$, we may define $q^{(2)}_\beta(z)$ arbitrarily as any probability distribution on $\cZ$ — for example, set
$$
q^{(2)}_\beta(z) := \frac{1}{|\cZ|} \quad \text{for all } z \in \cZ.
$$
This definition is consistent, since for such $\beta$, $\underline{q}(z) c_{z, \beta} = 0$ for all $z$.

In all cases, we have the identity
\begin{equation} \label{eq: 4.75}
\omega_\beta q^{(2)}_\beta(z) = \underline{q}(z) c_{z, \beta}, \quad \text{for all } (\beta, z) \in \cE \times \cZ.
\end{equation}
Substituting \eqref{eq : qxz convex hull} and \eqref{eq: 4.75} into the expression for $q_x(z)$, and using \eqref{eq: cond q1 4.6}, we obtain the desired factorization:
$$
q_x(z) = \sum_{\beta=0}^{2^d - 1} \omega_\beta q^{(2)}_\beta(z) r_\beta(x) = \sum_{\beta=0}^{2^d - 1} q^{(1)}_x(\beta) q^{(2)}_\beta(z),
$$
which shows that $q = q^{(2)} \circ q^{(1)}$.
\end{proof}

\begin{rem}
The factorization lemma above shows that any $\alpha$-LDP mechanism can be decomposed into an extremal $\alpha$-LDP mechanism, followed by an additional randomization step. Since any such randomization further reduces the statistical information available from the data—without contributing to the satisfaction of the $\alpha$-LDP constraint— applying a second randomization is, from a statistical perspective, detrimental. The first (extremal) step alone suffices to ensure privacy, so only extremal mechanisms need to be considered in theory.

Building on this insight, we will derive an expression for the maximal Fisher information achievable by any $\alpha$-LDP mechanism, under suitable regularity assumptions on a parametric model.
\end{rem}

\begin{rem}
The factorization lemma \ref{lem : factorisation cas fini} applies to the case where the input space $\mathcal{X}$ is finite. In this setting, the extremal mechanism takes values in the set $\mathcal{E}$, whose cardinality is $2^{\Card(\mathcal{X})}$—significantly larger than $\Card(\mathcal{X})$ itself. However, the actual support of the randomization $q^{(1)}$ is determined by the support of the sub-probability measure $\omega$, which can be much smaller than the full set $\mathcal{E}$ (see Point (a) of Theorem 2 in \cite{kairouzExtremalMechanismsLocal2015}).

One of the main findings of this paper consists in extending Lemma \ref{lem : factorisation cas fini} to the case where the input space $\mathcal{X}$ is continuous, for example $\mathcal{X} = \mathbb{R}$. As already mentioned in the introduction and as we will see in Section \ref{ss: infinite}, in such a setting the set of extremal points $\mathcal{E}$ becomes infinite-dimensional.
\end{rem}

\subsection{Parametric models and Fisher information}\label{ss:Parametric models and Fisher information}
In this subsection, we explain how the factorization lemma can be used to derive an expression for the maximal Fisher information achievable by any privacy mechanism satisfying the $\alpha$-LDP constraint, in the case where the data space $\mathcal{X}$ is discrete. To do so, we first introduce some background.

Consider i.i.d. data $X_1, \ldots, X_n$ drawn from a distribution $P_\theta$ on the finite space $\mathcal{X} = \{x_1, \ldots, x_d\}$, where $P_\theta$ belongs to a parametric model $\mathcal{P} = (P_\theta)_{\theta \in \Theta}$ defined on an open set $\Theta \subset \mathbb{R}^{d_\Theta}$. We assume that the model is regular in a sense to be better specified later (see in particular Definition \ref{def: DQM}), and denote the corresponding Fisher information by $\mathcal{I}_\theta(\mathcal{P}) \in \mathbb{R}^{d \times d}$. We write $p_\theta(x) = P_\theta(X = x)$ for the probability mass function.

If data must be protected through local differential privacy, the observed variables $Z_1, \ldots, Z_n$ are sanitized via a Markov kernel, as described in Section \ref{s: pb formulation and main}. Let $q : \mathcal{X} \to \mathcal{Z}$ be such an $\alpha$-LDP mechanism with output space $\mathcal{Z} = \{z_1, \ldots, z_l\}$. The distribution of the public variable $Z$ is then given by $\widetilde{p}_\theta(z) = P_\theta(Z = z) = \int_{\mathcal{X}} p_\theta(x) q_x(z) \mu(dx)$, where $\mu$ (resp. $\nu$) denotes the counting measure on $\mathcal{X}$ (resp. $\mathcal{Z}$).

If the map $\theta \mapsto p_\theta(x)$ is continuously differentiable for all $x \in \mathcal{X}$, then $\theta \mapsto \widetilde{p}_\theta(z)$ is also continuously differentiable for all $z \in \mathcal{Z}$. We will recall a more precise result on the preservation of regularity under privacy constraints, as established in \cite{steinbergerEfficiencyLocalDifferent} (see \eqref{eq : score public} below). Thus, the resulting statistical model under the privacy mechanism $q$ remains regular and has Fisher information denoted by $\mathcal{I}_\theta(q \circ \mathcal{P})$.

Under standard regularity assumptions, the maximum likelihood estimator in the privatized model $q \circ \mathcal{P}$ is asymptotically normal with distribution $N(0, \mathcal{I}_\theta(q \circ \mathcal{P})^{-1})$. It is also efficient in the sense that no regular estimator in $q \circ \mathcal{P}$ can have smaller asymptotic variance. However, since the mechanism $q$ is freely chosen subject to the $\alpha$-LDP constraint \eqref{eq: def LDP}, it is natural to ask which mechanism maximizes the Fisher information. This leads to the optimization problem
\begin{equation}{\label{eq: opt Fisher}}
\sup_{q \in \mathcal{Q}_\alpha} \mathcal{I}_\theta(q \circ \mathcal{P}),
\end{equation}
where we recall that $\mathcal{Q}_\alpha$ denotes the set of $\alpha$-LDP channels from $\mathcal{X}$ to a finite set $\mathcal{Z}$.
This optimization problem is challenging and has been thoroughly investigated in the elegant work \cite{steinbergerEfficiencyLocalDifferent}, which develops a general theory of statistical efficiency under local differential privacy.

The factorization lemma proven earlier allows us to revisit the problem from a novel perspective. It implies that we can restrict attention to the class of extremal (staircase) mechanisms, significantly reducing the search space. This additional structure enables us to derive an explicit expression for the maximal Fisher information under $\alpha$-LDP constraints, as presented in Theorem \ref{th : info cas fini}.

To formalize the notion of regularity and rigorously study asymptotic efficiency, we adopt the classical framework of differentiability in quadratic mean (DQM) at an interior point $\theta_0 \in \overset{\circ}{\Theta}$.

\begin{df}\label{def: DQM}
A statistical model $\mathcal{P} = (P_\theta)_{\theta \in \Theta}$, with $\Theta \subset \R^{d_\Theta}$, sample space $(\mathcal{X}, \mathcal{F})$, and dominating measure $\mu$, is said to be \textit{differentiable in quadratic mean (DQM) at $\theta_0 \in \overset{\circ}{\Theta}$} if the $\mu$-densities $p_\theta = \frac{dP_\theta}{d\mu}$ satisfy, for $h \in \mathbb{R}^{d_\Theta}$,
$$
\int_\cX \left(\sqrt{p_{\theta_0+h}(x)} - \sqrt{p_{\theta_0}(x)} - \frac{1}{2} h^{\top}s_{\theta_0}(x) \sqrt{p_{\theta_0}(x)} \right)^2 \mu(dx) = o(\|h\|^2)
\quad \text{as } h \to 0,
$$
for some measurable vector-valued function $s_{\theta_0} : \cX \to \mathbb{R}^{d_\Theta}$. This function is called the score at $\theta_0$.
\end{df}

This definition is independent of the choice of the dominating measure $\mu$. To see this, consider two dominating measures $\mu_1$ and $\mu_2$, and note that both corresponding integrals can be written with respect to the measure $\mu = \mu_1 + \mu_2$.

We say that a model is DQM (or simply regular) if it is DQM at every point $\theta \in \overset{\circ}{\Theta}$. Let us also recall some properties of the score function, which will be crucial in the sequel. According to Theorem 7.2 in \cite{Van07}, if the model $\mathcal{P}$ is DQM at $\theta_0$, then the score satisfies 
\begin{equation}{\label{eq: score centered}}
\mathbb{E}_{\theta_0}[s_{\theta_0}] = 0,
\end{equation}
and the Fisher information
$$
\mathcal{I}_{\theta_0}(\mathcal{P}) = \mathbb{E}_{\theta_0}[s_{\theta_0} s_{\theta_0}^{\top}]
$$
exists and is finite. Moreover, in a $\mathcal{C}^1$ model, the score function is explicitly given by
$$
s_{\theta_0}(x) = \frac{\nabla_\theta p_{\theta_0}(x)}{p_{\theta_0}(x)}.
$$
Regarding transformations of DQM models, Proposition A.5 in \cite{Bic93} states that if $\mathcal{P} = (P_\theta)_{\theta \in \Theta}$ is DQM and $T: \mathcal{X} \to \mathcal{Z}$ is measurable, then the transformed model $T\mathcal{P} = (P_\theta \circ T^{-1})_{\theta \in \Theta}$ is also DQM, with score function
$$
t_{\theta_0}(z) = \mathbb{E}[s_{\theta_0}(X) \mid Z = z],
$$
where $(X, Z) \sim (X, T(X))$ under $P_{\theta_0}$.

For local differential privacy, one must consider randomized transformations—i.e., Markov kernels. An extension of the result in \cite{Bic93} to this context is provided in Lemma 3.1 of \cite{steinbergerEfficiencyLocalDifferent}. It shows that if the original model $(P_\theta)_\theta$ is DQM at $\theta_0$, then the privatized model $(q \circ P_\theta)_\theta$ is also DQM at $\theta_0$, with score function
\begin{equation} \label{eq : score public}
t_{\theta_0}(z) = \mathbb{E}\left[s_{\theta_0}(X) \mid Z = z\right] =
\frac{\int_\cX s_{\theta_0}(x) q_x(z) p_{\theta_0}(x) \mu(dx)}{\int_\cX q_x(z) p_{\theta_0}(x) \mu(dx)}.
\end{equation}

In what follows, given a parametric model $\mathcal{P} = (P_\theta)_\theta$ on $\mathcal{X}$, we denote by $q \circ \mathcal{P} = (\tilde{p}_\theta)_\theta$ the model induced on $\mathcal{Z}$ via the channel $q$. We denote by
$$
\mathcal{I}_{\theta_0}(\mathcal{P}) = \mathbb{E}\left[s_{\theta_0}(X)s_{\theta_0}(X)^{\top}\right]
$$
the Fisher information of the private variable $X$, and by
$$
\mathcal{I}_{\theta_0}(q \circ \mathcal{P}) = \mathbb{E}\left[t_{\theta_0}(Z)t_{\theta_0}(Z)^{\top}\right]
$$
that of the public variable $Z$.

From Lemma 3.1 in \cite{steinbergerEfficiencyLocalDifferent}, we know that the Fisher information of the public variable is always smaller than or equal to that of the private one. More precisely,
\begin{equation} \label{eq : decroissance Fisher}
\mathcal{I}_{\theta_0}(q \circ \mathcal{P}) = \mathcal{I}_{\theta_0}(\mathcal{P}) -
\mathbb{E}\left[\left(s_{\theta_0}(X) - t_{\theta_0}(Z)\right)\left(s_{\theta_0}(X) - t_{\theta_0}(Z)\right)^{\top} \right].
\end{equation}

The factorization lemma provides a valuable structural insight into any randomized channel $q$. Specifically, it guarantees the existence of a decomposition of the form $q = q^{(2)} \circ q^{(1)}$, where $q^{(1)}$ is an extremal $\alpha$-LDP mechanism. This yields the chain of inequalities:
\begin{equation}{\label{eq: fact on Fisher}}
\mathcal{I}_{\theta_0}\big((\widetilde{p}_\theta)_\theta\big)
= \mathcal{I}_{\theta_0}(q \circ \mathcal{P}) 
= \mathcal{I}_{\theta_0}((q^{(2)} \circ q^{(1)}) \circ \mathcal{P}) 
= \mathcal{I}_{\theta_0}(q^{(2)} \circ (q^{(1)} \circ \mathcal{P})) 
\le \mathcal{I}_{\theta_0}(q^{(1)} \circ \mathcal{P}). 
\end{equation}
Therefore, to study the optimization problem in \eqref{eq: opt Fisher}, it suffices to restrict our attention to extremal mechanisms of the form $q = q^{(1)}$.

Moreover, the fact that \( q^{(1)} \) is an extremal staircase mechanism introduces additional structure that enables us to reformulate the optimization problem \eqref{eq: opt Fisher}. In the proof of Proposition \ref{prop : borne sup Fisher fini}, we show that if the parameter $\theta$ is one dimensional, the optimization problem \eqref{eq: opt Fisher} is the same as
\[
\max_{q \in \mathcal{Q}_\alpha} \, \mathcal{I}_{\theta_0}(q \circ \mathcal{P}) = (e^\alpha - 1)^2 M^\star,
\]
where \( M^\star \) denotes the optimal value of the following linear program:
\begin{equation} \label{eq: opt 14.75}
M^\star = \max_{\omega \in \mathbb{R}_+^{2d}} \, w^T i \quad \text{subject to} \quad Rw = \bm{1},
\end{equation}
with \( i = (i_\beta)_{\beta \in \mathcal{E}} \) defined by
\begin{equation} \label{eq : def i beta}
i_\beta := \frac{\left( \int_\mathcal{X} s_\theta(x) \indi{F^+_\beta}(x) p_\theta(x) \, \mu(dx) \right)^2}{1 + (e^\alpha - 1) \int_{F^+_\beta} p_{\theta_0}(x) \, dx},
\end{equation}
with $F_\beta^+$ as in \eqref{eq: Fbeta+ 4.8}. The expression of $i_\beta$ has been derived explicitly in our proof, starting from the use of staircase mechanisms. Here, \( R \) is the staircase pattern matrix introduced in Definition~\ref{def: staircase matrix}.

Readers familiar with \cite{kairouzExtremalMechanismsLocal2015} will notice a strong analogy between this optimization problem and the fundamental privacy–utility tradeoff discussed in their Equation (8), reformulated in their Theorem 4.

This leads directly to the statement of the next theorem. 

\begin{thm}\label{th : info cas fini}
	Assume that $\Theta \subset \R$ and that $\cP=(p_{\theta})_\theta$ 
	is DQM at $\theta_0 \in \overset{\circ}{\Theta}$ with score function $s_{\theta_0}$.
	 Assume that $s_{\theta_0}(x)\neq 0$ for all $x\in \cX$.
	Then, there exists $\overline{\alpha}$ such that if $\alpha \le \overline{\alpha}$,
	\begin{equation*}
		\max_{q \in \cQ_\alpha}  \cI_{\theta_0}(q \circ \cP) = 	\cI^{\text{max},\alpha}_{\theta_0}((p_\theta)_\theta)		
	\end{equation*}
		where
	\begin{equation}\label{eq : def I max caf fini}
		\cI^{\text{max},\alpha}_{\theta_0}((p_\theta)_\theta)
		:=\frac{(e^\alpha-1)^2}{4}% 
		\times \frac{\E\left[\abs{s_{\theta_0}(X)}\right]^2}{\left[ (1-n_\text{max})+e^\alpha n_\text{max}  \right]
			\left[n_\text{max}+(1-n_\text{max})e^\alpha   \right]}
	\end{equation}
	with 
    \begin{equation}{\label{eq: def nmax}}
   n_\text{max}:= \int_{x: s_{\theta_0}(x)>0} p_{\theta_0}(x) \mu(dx)
	 \in (0,1).  
    \end{equation}
     The value of $\overline{\alpha}$ is independent of the channel $q$, but depends on the model $(p_{\theta})_\theta$.
\end{thm}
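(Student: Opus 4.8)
The plan is to reduce the theorem to an explicit linear-programming (LP) computation and then exhibit matching primal and dual feasible points, the dual one being valid precisely for small $\alpha$. By Lemma~\ref{lem : factorisation cas fini} and the chain \eqref{eq: fact on Fisher} one may restrict to extremal mechanisms, and — as established in the proof of Proposition~\ref{prop : borne sup Fisher fini} — problem \eqref{eq: opt Fisher} then reads $\max_{q\in\cQ_\alpha}\cI_{\theta_0}(q\circ\cP)=(e^\alpha-1)^2M^\star$ with $M^\star=\max\{w^\top i:\ w\in\R_+^{2^d},\ Rw=\mathbf 1\}$ and $i_\beta=S(F^+_\beta)^2/(1+\delta N(F^+_\beta))$, where I abbreviate $\delta:=e^\alpha-1$, $E:=\E_{\theta_0}[\,|s_{\theta_0}(X)|\,]$, $n:=n_{\text{max}}$, $S(A):=\int_A s_{\theta_0}p_{\theta_0}\,d\mu$, $N(A):=\int_A p_{\theta_0}\,d\mu$, $u_1:=(1-n)+e^\alpha n$ and $u_2:=n+e^\alpha(1-n)$, so that $u_1+u_2=e^\alpha+1$ and $u_1-\delta n=u_2-\delta(1-n)=1$. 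The theorem is equivalent to $M^\star=E^2/(4u_1u_2)$, since $(e^\alpha-1)^2E^2/(4u_1u_2)$ is exactly the right-hand side of \eqref{eq : def I max caf fini}.

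\emph{Lower bound, valid for all $\alpha$.} Since $\E_{\theta_0}[s_{\theta_0}]=0$ and $s_{\theta_0}$ never vanishes, $A^\star:=\{x:s_{\theta_0}(x)>0\}$ and $(A^\star)^c$ are both non-empty, with $S(A^\star)=-S((A^\star)^c)=E/2$ and $N(A^\star)=n$. As $\beta\mapsto F^+_\beta$ exhausts all subsets of $\cX$, pick $\beta_1,\beta_2$ with $F^+_{\beta_1}=A^\star$, $F^+_{\beta_2}=(A^\star)^c$ and set $w_{\beta_1}=w_{\beta_2}=1/(e^\alpha+1)$, all other coordinates zero. Evaluating the $j$-th coordinate of $Rw$ separately for $x_j\in A^\star$ and $x_j\in(A^\star)^c$ shows $Rw=\mathbf 1$, and the objective equals $\tfrac{1}{e^\alpha+1}\big(\tfrac{E^2}{4u_1}+\tfrac{E^2}{4u_2}\big)=\tfrac{E^2}{4u_1u_2}$. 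Hence $M^\star\ge E^2/(4u_1u_2)$ for every $\alpha$, which already gives $\max_q\cI_{\theta_0}(q\circ\cP)\ge\cI^{\text{max},\alpha}_{\theta_0}((p_\theta)_\theta)$.

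\emph{Upper bound, valid for small $\alpha$.} I construct a dual certificate $\lambda\in\R^d$ with coordinates $\lambda_x=a\,p_{\theta_0}(x)+b\,s_{\theta_0}(x)p_{\theta_0}(x)$, where $a:=E^2/(4u_1u_2)$ and $b:=(1-2n)E/(2u_1u_2)$. Then $\mathbf 1^\top\lambda=a\int p_{\theta_0}+b\int s_{\theta_0}p_{\theta_0}=a$, and using \eqref{eq: r beta x} one computes $(R^\top\lambda)_\beta=r_\beta^\top\lambda=a(1+\delta N(F^+_\beta))+b\,\delta\,S(F^+_\beta)$; by weak LP duality it then suffices to check $(R^\top\lambda)_\beta\ge i_\beta$ for all $\beta$. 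Writing $u:=1+\delta N(A)$, $S:=S(A)$ for $A:=F^+_\beta$, this is the quadratic inequality $S^2-\delta b\,uS-au^2\le0$; substituting $a,b$, its discriminant collapses to $E^2(e^\alpha+1)^2/(4u_1^2u_2^2)$ and its two roots to $uE/(2u_1)$ and $-uE/(2u_2)$, so the inequality becomes
\[
-\frac{(1+\delta N(A))E}{2u_2}\ \le\ S(A)\ \le\ \frac{(1+\delta N(A))E}{2u_1}\qquad\text{for all }A\subseteq\cX .
\]
Since both bounds are increasing in $N(A)$ and $S(A\cap(A^\star)^c)\le0\le S(A\cap A^\star)$, it is enough to check the upper inequality for $B\subseteq A^\star$ and the lower one for $C\subseteq(A^\star)^c$. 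Maximizing the left side over such $B$ turns the upper inequality into $\E_{\theta_0}[(s_{\theta_0}(X)-t^\star)^+]\le E/(2u_1)$ with $t^\star:=\delta E/(2u_1)>0$, and the lower one into $\E_{\theta_0}[(-s_{\theta_0}(X)-\widetilde t)^+]\le E/(2u_2)$ with $\widetilde t:=\delta E/(2u_2)>0$. Now $t\mapsto\E_{\theta_0}[(s_{\theta_0}-t)^+]$ is piecewise linear and equals $E/2-nt$ on $[0,\min\{s_{\theta_0}(x):s_{\theta_0}(x)>0\}]$, while $t\mapsto\E_{\theta_0}[(-s_{\theta_0}-t)^+]$ equals $E/2-(1-n)t$ on $[0,\min\{-s_{\theta_0}(x):s_{\theta_0}(x)<0\}]$; since $u_1-\delta n=1$ and $u_2-\delta(1-n)=1$, both inequalities hold — in fact with equality — as soon as $t^\star$, resp.\ $\widetilde t$, lies in the corresponding interval. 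This holds whenever $(e^\alpha-1)E\le2\min_{x\in\cX}|s_{\theta_0}(x)|$ (recall $u_1,u_2\ge1$), which defines $\overline\alpha:=\ln\!\big(1+2\min_{x\in\cX}|s_{\theta_0}(x)|/\E_{\theta_0}[|s_{\theta_0}(X)|]\big)>0$, depending only on the model. For $\alpha\le\overline\alpha$, weak duality yields $M^\star\le\mathbf 1^\top\lambda=E^2/(4u_1u_2)$, hence $M^\star=E^2/(4u_1u_2)$ and the claimed formula.

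The main obstacle is the dual step: guessing the right one-parameter family $\lambda=a\,p_{\theta_0}+b\,s_{\theta_0}p_{\theta_0}$ (morally the span of $1$ and the score), checking the algebraic coincidences by which the discriminant and the roots factor neatly through $u_1,u_2$, and recognizing that the surviving scalar inequality is the assertion that $t\mapsto\E_{\theta_0}[(s_{\theta_0}-t)^+]$ is still on its first linear piece at $t^\star$. That last point is exactly where $\alpha\le\overline\alpha$ is forced and cannot be removed: for larger $\alpha$ the optimal staircase mechanism may be supported on level sets of $s_{\theta_0}$ other than $\{s_{\theta_0}>0\}$ and $\{s_{\theta_0}<0\}$, and the closed form breaks.
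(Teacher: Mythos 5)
Your proposal is correct, and while the setup (factorize via Lemma~\ref{lem : factorisation cas fini}, pass to the LP $M^\star=\max\{w^\top i:Rw=\mathbf 1,\,w\ge0\}$) coincides with the paper's Proposition~\ref{prop : borne sup Fisher fini}, the way you establish $M^\star=E^2/(4u_1u_2)$ is genuinely different from the paper's Lemma~\ref{lem : determination max fini}. The paper argues on the primal side: it studies the support size $q$ of an optimal $\overline\omega$, rules out $q=1$, computes the $q=2$ value exactly, and then shows through several subcases that $q\ge3$ is suboptimal once $\alpha$ is small enough, with $\overline\alpha$ emerging implicitly from a collection of strict inequalities between the $i_\beta$'s. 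You instead produce an explicit dual certificate $\lambda=a\,p_{\theta_0}+b\,s_{\theta_0}p_{\theta_0}$, verify $\mathbf 1^\top\lambda=a=E^2/(4u_1u_2)$, reduce dual feasibility $R^\top\lambda\ge i$ to a quadratic in $S(F^+_\beta)$ whose roots factor through $u_1,u_2$, and observe that the surviving scalar constraint holds (with equality) precisely while $t\mapsto\E[(s_{\theta_0}-t)^+]$ is still on its first linear piece — checked separately on the two sides of $\{s_{\theta_0}>0\}$ after restricting to $B\subseteq A^\star$ and $C\subseteq(A^\star)^c$, which is a clean monotonicity reduction. The payoff of the dual route is a closed-form, model-explicit threshold $\overline\alpha=\ln\bigl(1+2\min_x|s_{\theta_0}(x)|/\E[|s_{\theta_0}(X)|]\bigr)$ and a short verification of optimality without case enumeration; the paper's primal route, by contrast, gives more direct structural insight into how the optimal staircase degenerates as $\alpha$ grows, at the cost of a heavier combinatorial argument and a less explicit $\overline\alpha$. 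The lower bound (the two-point feasible $w$ supported on $A^\star$ and $(A^\star)^c$) is essentially the same construction as the paper's Proposition~\ref{prop : borne inf Fisher fini}, just expressed at the LP level rather than directly on the channel.
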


The proof of Theorem \ref{th : info cas fini} is provided in Section \ref{s: proof main}. It follows directly from Propositions \ref{prop : borne sup Fisher fini} and \ref{prop : borne inf Fisher fini} in Section \ref{s: proof main}, which establish upper and lower bounds, respectively. We emphasize that in the lower bound, we explicitly construct an $\alpha$-LDP mechanism that attains the Fisher information $\cI^{\text{max},\alpha}_{\theta_0}((p_\theta)_\theta)$. Hence, this proposition gives an exact characterization of the maximal Fisher information achievable under the $\alpha$-LDP constraint, at least for small values of $\alpha$.

\begin{rem}{\label{rk: choice Fmax}}
The key idea in the proof of the upper bound is to introduce the sets
\[
F_{\max}^+ := \left\{ x \in \mathcal{X} : s_{\theta_0}(x) > 0 \right\} \quad \text{and} \quad F_{\max}^{\prime +} := \left\{ x \in \mathcal{X} : s_{\theta_0}(x) < 0 \right\},
\]
which form a partition of \( \mathcal{X} \), since \( s_{\theta_0}(x) \neq 0 \) by hypothesis. We then observe that for any measurable set \( F_\beta^+ \subset \mathcal{X} \), the numerator of \( i_\beta \) in \eqref{eq : def i beta} satisfies
\[
\left( \int_\mathcal{X} s_\theta(x) \indi{F^+_\beta}(x) p_\theta(x) \, \mu(dx) \right)^2 \le \left( \frac{1}{2} \int_\mathcal{X} |s_\theta(x)| p_\theta(x) \, \mu(dx) \right)^2 = \left( \frac{1}{2} \mathbb{E}[|s_\theta(X)|] \right)^2,
\]
with equality if and only if \( F_\beta^+ = F_{\max}^+ \) or \( F_\beta^+ = F_{\max}^{\prime +} \). Among the collection of sets \( (F_\beta^+)_{\beta \in \mathcal{E}} \) introduced in \eqref{eq: Fbeta+ 4.8}, we denote by \( \beta_{\max} \) and \( \beta_{\max}' \) the indices corresponding to \( F_{\max}^+ \) and \( F_{\max}^{\prime +} \), respectively. These provide natural candidates for the support of an optimal solution to the linear program \eqref{eq: opt 14.75}. The remainder of the proof of the upper bound is more technical, consisting in showing that the maximum of the optimization problem is indeed attained by a sub-probability measure \( (\omega_\beta)_\beta \) supported on the set \( \{ \beta_{\max}, \beta_{\max}' \} \).
\end{rem}

\begin{rem}
It is worth noting that, in the derivation of the upper bound, a preliminary and simple estimate shows that
$
	\cI_{\theta_0}((\widetilde{p}_\theta)_\theta) \le 
	\frac{(e^\alpha-1)^2}{4} \times \E\left[\abs{s_{\theta_0}(X)}\right]^2,
	$
and this bound holds even without assuming the additional condition $\alpha \le \bar{\alpha}$. Furthermore, when $\alpha$ is close to zero, this expression coincides with the one in \eqref{eq : def I max caf fini}, since the denominator
$
	\left[ (1-n_\text{max})+e^\alpha n_\text{max}  \right]
	\left[n_\text{max}+(1-n_\text{max})e^\alpha   \right]
	$
simply reduces to 1 in that regime. We will see in the next section that this result extends to more general settings, in particular when the sample space $\mathcal{X}$ is continuous (see Theorem \ref{thm : equivalent alpha petit ctn}).
\end{rem}

\section{Main results: Extension to the Continuous Case}{\label{ss: infinite}}

The goal of this section is to extend the results established in Section~\ref{ss: finite} to the setting where \( \mathcal{X} \) is no longer a finite set. Specifically, we now consider \( \mathcal{X} \subset \mathbb{R}^d \) to be an open subset for some \( d \geq 1 \), \( \mathcal{B}(\mathcal{X}) \) the Borel \( \sigma \)-algebra on \( \mathcal{X} \), and \( \mathcal{Z} \) a finite or countable set.

%\arnaud{Voir sur cette remarque} We emphasize that, although we consider here the case \( \mathcal{X} \subset \mathbb{R}^d \), all our results naturally extend to a more general framework where \( \mathcal{X} \) is a topologically complete separable metric space, equipped with a Borel measure \( \mu \) that is nonatomic and normalized (see Remark~\ref{rk: extension functional space} for a more detailed discussion). This generalization is particularly interesting and useful for applications concerning the statistical analysis of stochastic processes driven by Brownian motion, under local differential privacy constraints. In such contexts, where the observations are trajectories of the process, the natural underlying space is the Wiener space equipped with the Wiener measure. It is therefore important to highlight that our theoretical results remain valid in this broader setting as well.

%\medskip

Note that the problem formulation introduced at the beginning of Section~\ref{s: pb formulation and main} was not restricted to the discrete case, and the notation used here remains fully consistent with that section. In particular, we denote by $\mathcal{Q}_\alpha$ the set of randomization mechanisms from $\mathcal{X}$ to $\mathcal{Z}$ of the form $q_x(z) = \mathbb{P}(Z = z \mid X = x)$, such that
\begin{equation*}
	\forall z \in \mathcal{Z}, \quad \frac{q_x(z)}{q_{x'}(z)} \le e^\alpha, \quad \text{for a.e. } dx \, dx'.
\end{equation*}
The key point now lies in extending the definition of extremal mechanisms to the continuous setting. As in the discrete case, such randomizations will play a central role in the study of optimal privacy mechanisms.

To this end, we introduce notation that enables us to generalize the space of extremal vectors introduced in~\eqref{eq : R matrix by slice} (or equivalently, the staircase mechanisms defined in Definition~\ref{def: staircase matrix}) to the continuous case. From this, we prove the extension of the results from Section~\ref{ss: finite}.

We define the set
\begin{align*}
	\theconvex = \left\{ v : \mathcal{X} \rightarrow \mathbb{R} \text{ measurable} \, \middle| \, 1 \leq v(x) \leq e^\alpha \text{ a.e.} \right\} \subset L^\infty(\mathcal{X}, \mathbb{R}),
\end{align*}
which plays the role of the hyperrectangle $[1, e^\alpha]^d$ in the discrete case (where $d = |\mathcal{X}|$). As the extremal points of $[1, e^\alpha]^d$ were key in the discrete setting, the extremal points of $\theconvex$ will now play an analogous role. This gives rise to the set
\begin{align*}
	\mathcal{E} = \left\{ r : \mathcal{X} \rightarrow \mathbb{R} \text{ measurable} \, \middle| \, r(x) \in \{1, e^\alpha\} \text{ a.e.} \right\}.
\end{align*}
For any $r \in \mathcal{E}$, define the sets
\[
F^+_r := r^{-1}(\{e^\alpha\}) \quad \text{and} \quad F^-_r := r^{-1}(\{1\}),
\]
so that, $dx$-almost everywhere, we can write
\begin{equation}{\label{eq: def r(x)}}
r(x) = e^\alpha \indi{F^+_r}(x) + \indi{F^-_r}(x) = 1 + (e^\alpha - 1)\indi{F^+_r}(x).
\end{equation}
These sets and expressions are the continuous analogues of the discrete objects introduced in \eqref{eq: Fbeta+ 4.8} and \eqref{eq: r beta x}.

As detailed in Section~\ref{S: tools} on functional analytic tools, we endow $\theconvex$ with a distance $d_\star$ that metrizes the weak-$\star$ topology. The set $\theconvex$ is convex, compact, and separable (see Lemmas~\ref{l: B closed} and~\ref{l: B separable}). Moreover, $\mathcal{E}$ coincides with the set of extremal points of $\theconvex$ (see Lemma \ref{lem : extremal points}). By Theorem~\ref{th: Choquet}, $\mathcal{E}$ is a $G_\delta$ set (i.e., a countable intersection of open sets), and since we equip $\theconvex$ with the Borel $\sigma$-algebra $\mathcal{B}(\theconvex)$, it follows that $\mathcal{E}$ is measurable. 

Note that $\theconvex \subset \widehat{B} \subset L^\infty(\mathcal{X}, \mathbb{R})$, 
where $\widehat{B}$ is the closed ball $B_{\norm{\cdot}_\infty}(0,e^\alpha)$,
so evaluating an element of $\theconvex$ at a point $x \in \mathcal{X}$ is essential. The evaluation operator $e$ defined and extensively studied in Section \ref{ss:point eval} plays a key role in this context:
\begin{equation*}
	e : 
	\begin{cases}
		(\mathcal{X} \times  \widehat{B}, \mathcal{B}(\mathcal{X}) \otimes \mathcal{B}( \widehat{B})) \to \mathbb{R} \\
		(x, r) \mapsto e_x(r).
	\end{cases}
\end{equation*}
%This operator is extensively studied in Subsection~\ref{ss:point eval}. 
We show in Subsection~\ref{ss:point eval} it is measurable and satisfies, for all $r \in  \widehat{B}$, $e_x(r) = r(x)$ for a.e.\ $x$. 

We are now ready to introduce extremal randomizations in the continuous setting. For any non-negative Borel measure $\mu$ on $\mathcal{E}$ satisfying
\begin{equation} \label{eq : cond norm mu continue}
	\text{for a.e.\ } x \in \mathcal{X}, \quad \int_{\mathcal{E}} e_x(r) \, \mu(dr) = 1,
\end{equation}
we define a randomization mechanism from $\mathcal{X}$ to $\mathcal{E}$ given by the kernel
\begin{equation} \label{eq : def random generic extr ctn}
	q^{(\mu)}_x(dr) = e_x(r) \mu(dr).
\end{equation}
This randomization is extremal in the sense that
\begin{align*}
	\frac{q^{(\mu)}_x(dr)}{q^{(\mu)}_{x'}(dr)} 
	= \frac{e_x(r)}{e_{x'}(r)} 
	= \frac{r(x)}{r(x')} \in \{e^{-\alpha}, 1, e^{\alpha}\}, \quad \text{for a.e.\ } dx \, dx'.
\end{align*}
Since $e_x(r) \in [1, e^\alpha]$, condition~\eqref{eq : cond norm mu continue} implies the bounds
\begin{equation}{\label{eq: bound mu E}}
e^{-\alpha} \leq \mu(\mathcal{E}) \leq 1.
\end{equation}

\subsection{A factorization lemma}{\label{ss: Fisher}}

We emphasize that the randomizations just introduced constitute a natural generalization of the extremal randomizations appearing in Lemma~\ref{lem : factorisation cas fini}. In the finite case, the randomization takes values in the discrete set $\{0, \dots, 2^{\Card(\mathcal{X})} - 1\}$, which indexes the extremal vectors appearing in~\eqref{eq : R matrix by slice}. In contrast, in the continuous setting, the randomization takes values in the set $\mathcal{E}$ of measurable functions with values in $\{1, e^\alpha\}$—the analogous of the extremal vectors from the discrete case.

The normalization condition~\eqref{eq : cond norm mu continue} is the continuous counterpart of the constraint~\eqref{eq: constraint factorisation 4.55} in the finite case. Similarly, the sub-probability measure $\mu$ on $\mathcal{E}$ plays the role of the coefficient vector $(\omega_\beta)_{\beta = 0, \dots, 2^{|\mathcal{X}|} - 1}$ used in the discrete setting.

\medskip
With this analogy in mind, we are now ready to extend the factorization result of Lemma~\ref{lem : factorisation cas fini} to the case where $\mathcal{X}$ is a continuous space.

\begin{prop} \label{prop : factor ctn}
Let $q$ be an $\alpha$-LDP mechanism from an open subset $\mathcal{X} \subset \mathbb{R}^d$ to a finite or countable space $\mathcal{Z}$. Then there exist a non-negative Radon measure $\mu$ on $\mathcal{E}$ and a randomization mechanism $q^{(2)}$ from $\mathcal{E}$ to $\mathcal{Z}$,
such that the mechanism $q$ admits the factorization
\[
	q = q^{(2)} \circ q^{(\mu)},
\]
where $q^{(\mu)}$ is defined by $q^{(\mu)}_x(dr) = e_x(r)\mu(dr)$, and the measure $\mu$ satisfies
$e^{-\alpha} \le \mu(\mathcal{E}) \le 1$.
\end{prop}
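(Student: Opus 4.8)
The plan is to mirror the proof of Lemma~\ref{lem : factorisation cas fini}, replacing Carathéodory's theorem by Choquet's theorem and handling the measure-theoretic subtleties of the infinite-dimensional setting. First I would set $\underline{q}(z) = \inf_{x \in \mathcal{X}} q_x(z)$ for each $z \in \mathcal{Z}$ (a countable infimum of measurable functions, hence measurable; note this must be interpreted up to the a.e.\ ambiguity inherent in $q_x(\cdot)$ as an element of $C$, so some care is needed to pick a genuine representative). As in the finite case, the $\alpha$-LDP constraint forces $\underline{q}(z) = 0 \Rightarrow q_x(z) = 0$ for a.e.\ $x$. Then I define the normalized ratios $u^{(z)}(x) = q_x(z)/\underline{q}(z)$ where $\underline{q}(z) > 0$ (and $u^{(z)} \equiv 1$ otherwise), which gives $u^{(z)} \in C$ for every $z$, since $1 \le q_x(z)/\underline{q}(z) \le e^\alpha$ a.e.\ by the privacy constraint.

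Next, since $C$ is convex, compact, and metrizable (for $d_\star$) with extreme-point set exactly $\mathcal{E}$ (Lemmas~\ref{l: B closed},~\ref{l: B separable}, and~\ref{lem : extremal points}), Choquet's theorem (Theorem~\ref{th: Choquet}) yields for each $z$ a Radon probability measure $c_z$ on $\mathcal{E} \subset C$ representing $u^{(z)}$ barycentrically: $\int_{\mathcal{E}} r \, c_z(dr) = u^{(z)}$ in $C$. The crucial step is to convert this barycentric identity in $L^\infty$ into a \emph{pointwise} statement. This is exactly where Proposition~\ref{p: evaluation} enters: the evaluation operator $e$ is jointly measurable and satisfies $e_x(r) = r(x)$ for a.e.\ $x$, for every fixed $r$. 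Using this, and a Fubini/Bochner argument to commute the barycenter integral with pointwise evaluation, one shows that for a.e.\ $x$,
\[
q_x(z) = \underline{q}(z)\, u^{(z)}(x) = \underline{q}(z) \int_{\mathcal{E}} e_x(r)\, c_z(dr).
\]
I then define $\mu := \sum_{z \in \mathcal{Z}} \underline{q}(z)\, c_z$, a non-negative Radon measure on $\mathcal{E}$ (the series converges because $\sum_z \underline{q}(z) \le \sum_z q_{x_0}(z) = 1$ for any fixed $x_0$, so $\mu(\mathcal{E}) \le 1$; conversely summing the display over $z$ gives $\int_{\mathcal{E}} e_x(r)\mu(dr) = 1$ for a.e.\ $x$, which together with $e_x(r) \ge 1$ forces $\mu(\mathcal{E}) \ge e^{-\alpha}$, so the normalization~\eqref{eq : cond norm mu continue} and the bounds~\eqref{eq: bound mu E} hold). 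Setting $q^{(\mu)}_x(dr) = e_x(r)\mu(dr)$ and defining $q^{(2)}_r(z) = \underline{q}(z)\, \frac{dc_z}{d\mu}(r)$ where this Radon–Nikodym derivative exists (and arbitrarily, say uniformly on $\mathcal{Z}$, where $\mu$-null), one checks $q^{(2)}$ is a Markov kernel $\mathcal{E} \to \mathcal{Z}$ and that $q_x(z) = \int_{\mathcal{E}} q^{(2)}_r(z)\, q^{(\mu)}_x(dr)$, i.e.\ $q = q^{(2)} \circ q^{(\mu)}$.

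The main obstacle, and the place requiring genuine care rather than routine bookkeeping, is the passage from the $L^\infty$-valued barycentric identity $\int_{\mathcal{E}} r\, c_z(dr) = u^{(z)}$ to the pointwise identity $\int_{\mathcal{E}} e_x(r)\, c_z(dr) = u^{(z)}(x)$ for a.e.\ $x$: the evaluation maps $r \mapsto e_x(r)$ are \emph{not} weak-$\star$ continuous, so one cannot simply apply the defining property of the barycenter. This is resolved precisely by the joint measurability of $e$ from Proposition~\ref{p: evaluation}, together with separability of $(C, d_\star)$, which lets one realize the barycenter as a Bochner integral and invoke a Fubini-type exchange; the measurability of $\mathcal{E}$ (as a $G_\delta$) is needed so that $c_z$ is a legitimate Borel measure on $\mathcal{E}$ and $\mu$ is well-defined. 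A secondary technical point is the measurable selection of $z \mapsto c_z$ (so that $\mu$ and the Radon–Nikodym derivatives are measurable in the right variables), which I would handle by appealing to measurable-selection results for the barycenter map, or — since $\mathcal{Z}$ is countable — simply noting that each $c_z$ can be chosen individually with no joint-measurability requirement across $z$.
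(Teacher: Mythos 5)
Your proposal is correct and follows essentially the same route as the paper's proof: essential-infimum normalization, Choquet's theorem on the compact convex set $\theconvex$ with extreme points $\mathcal{E}$, the evaluation operator of Proposition~\ref{p: evaluation} to upgrade the Bochner barycentric identity to a pointwise one, the definition $\mu=\sum_z\underline{q}(z)\mu_z$, and a Radon--Nikodym construction for $q^{(2)}$. One small inaccuracy: $\inf_{x\in\mathcal{X}}q_x(z)$ is not a countable infimum (only $\mathcal{Z}$ is countable, not $\mathcal{X}$), which is why the paper uses $\essinf$ — but you flag the a.e.\ ambiguity yourself, so this is a slip of phrasing rather than a gap.
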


\begin{proof}
As $q$ satisfies the $\alpha$-LDP constraint, we have for all $z \in \mathcal{Z}$:
\[
\frac{q_x(z)}{q_{x'}(z)} \le e^\alpha, \quad \text{for a.e. } (x, x') \in \mathcal{X} \times \mathcal{X}.
\]
This implies that the function
\[
\underline{q}(z) := \essinf_{x \in \mathcal{X}} q_x(z),
\]
defined analogously to the minimal mass function in Lemma~\ref{lem : factorisation cas fini}, satisfies $\underline{q}(z) > 0$. Define for each $z \in \mathcal{Z}$ and $x \in \mathcal{X}$ the ratio
\[
u_x(z) := \frac{q_x(z)}{\underline{q}(z)}.
\]
By construction, for all $z \in \mathcal{Z}$, the function $x \mapsto u_x(z)$ satisfies $1 \le u_x(z) \le e^\alpha$ for almost every $x \in \mathcal{X}$. 

Let us denote $v_z : x \mapsto u_x(z)$, which defines a function in the compact convex set $\theconvex$ introduced earlier. We apply Choquet's Theorem (see Theorem~\ref{th: Choquet}), whose applicability to the set $\theconvex$ is justified in Section~\ref{S: tools}. This yields the existence of a probability measure $\mu_z$ on $\mathcal{E}$ such that
\[
v_z = \int_{\mathcal{E}} r \, \mu_z(dr),
\]
where the integral is taken in the Bochner sense. %, and the equality holds in the space $\widehat{B}$ (see Section~\ref{ss:point eval} for details). 
By Proposition~\ref{p: evaluation}, applying the evaluation operator $e_x$ gives
\[
e_x(v_z) = \int_{\mathcal{E}} e_x(r) \mu_z(dr).
\]
Note that for a.e.\ $x \in \mathcal{X}$, we have $e_x(v_z) = v_z(x) = u_x(z)$, so that
\[
u_x(z) = \int_{\mathcal{E}} e_x(r) \mu_z(dr), \quad \text{for a.e.\ } x \in \mathcal{X},\ \forall z \in \mathcal{Z}.
\]
Recalling that $q_x(z) = u_x(z) \underline{q}(z)$, we deduce
\begin{equation}
\label{eq : q x with mu r in proof}
q_x(z) = \int_{\mathcal{E}} e_x(r) \mu_z(dr)\underline{q}(z), \quad \text{for a.e.\ } x \in \mathcal{X},\ \forall z \in \mathcal{Z}.
\end{equation}
Since $q$ is a randomization mechanism, we have
\[
\sum_{z \in \mathcal{Z}} q_x(z) = 1, \quad \text{for a.e.\ } x \in \mathcal{X}.
\]
Using this and~\eqref{eq : q x with mu r in proof}, we find for a.e.\ $x \in \mathcal{X}$:
\begin{align*}
1 &= \sum_{z \in \mathcal{Z}} \int_{\mathcal{E}} e_x(r) \mu_z(dr) \underline{q}(z) = \int_{\mathcal{E}} e_x(r) \left( \sum_{z \in \mathcal{Z}} \underline{q}(z) \mu_z(dr) \right),
\end{align*}
where the second line uses Fubini-Tonelli. We now define the measure
\begin{equation}
\label{eq : def mu proof fact ctn}
\mu := \sum_{z \in \mathcal{Z}} \underline{q}(z) \mu_z,
\end{equation}
which is a non-negative measure on $\mathcal{E}$. With this definition, we have shown:
\[
\int_{\mathcal{E}} e_x(r) \mu(dr) = 1, \quad \text{for a.e.\ } x \in \mathcal{X},
\]
which is exactly the normalization condition~\eqref{eq : cond norm mu continue}. Consequently, the measure $\mu$ defines a randomization mechanism with values in $\mathcal{E}$, via
\[
q^{(\mu)}_x(dr) := e_x(r) \mu(dr).
\]
It remains to define a randomization $q^{(2)}$ from $\mathcal{E}$ to $\mathcal{Z}$ such that $q = q^{(2)} \circ q^{(\mu)}$. From~\eqref{eq : def mu proof fact ctn}, we know that for each $z \in \mathcal{Z}$, the measure $\underline{q}(z) \mu_z$ is absolutely continuous with respect to $\mu$. By the Radon-Nikodym theorem, there exists a non-negative measurable function $\tilde{q}^{(2)}_z : \mathcal{E} \to \mathbb{R}_+$ such that
\begin{equation}
\label{eq : Rad Nyk in proof}
\underline{q}(z) \mu_z(dr) = \tilde{q}^{(2)}_z(r) \mu(dr).
\end{equation}
Observe that
\begin{equation}
\label{eq: 21.5}
\left( \sum_{z \in \mathcal{Z}} \tilde{q}^{(2)}_z(r) \right) \mu(dr) = \sum_{z \in \mathcal{Z}} \underline{q}(z) \mu_z(dr) = \mu(dr),
\end{equation}
which implies that $\sum_{z \in \mathcal{Z}} \tilde{q}^{(2)}_z(r) = 1$ for $\mu$-a.e.\ $r \in \mathcal{E}$.

We now define the randomization $q^{(2)}$ from $\mathcal{E}$ to $\mathcal{Z}$ by:
\[
q^{(2)}_r(z) := 
\begin{cases}
\tilde{q}^{(2)}_z(r) & \text{if } \sum_{z \in \mathcal{Z}} \tilde{q}^{(2)}_z(r) = 1, \\
\indi{z_1}(z) & \text{otherwise},
\end{cases}
\]
for some fixed $z_1 \in \mathcal{Z}$. Note that $\mu(dr)$-a.e.,
%on the support of $\mu$, 
$q^{(2)}_r(z) = \tilde{q}^{(2)}_z(r)$ for all $z \in \mathcal{Z}$, and the definition outside the support of $\mu$ is irrelevant.

We can now return to~\eqref{eq : q x with mu r in proof}, and using~\eqref{eq : Rad Nyk in proof}, we write for all $z\in\cZ$ and $dx$-a.e. :
\begin{align*}
q_x(z) &= \int_{\mathcal{E}} e_x(r) \tilde{q}^{(2)}_z(r) \mu(dr) = \int_{\mathcal{E}} e_x(r) q^{(2)}_r(z) \mu(dr) = \int_{\mathcal{E}} q^{(2)}_r(z) q^{(\mu)}_x(dr),
\end{align*}
which establishes that
\begin{equation}
\label{eq : composition in the proof ctn}
q = q^{(2)} \circ q^{(\mu)}.
\end{equation}
This completes the factorization construction in the continuous case.
\end{proof}

\begin{rem}
\begin{enumerate}
    \item The proof of Proposition \ref{prop : factor ctn} follows the general scheme of the proof of Lemma \ref{lem : factorisation cas fini}, but in a more involved setting. A key aspect for the application of Choquet's theorem is the compactness of the set $\theconvex$, which motivates the introduction of the weak-$\star$ topology. However, as pointed out in Remark \ref{rk: E not closed} below, the set of extremal points $\cE$ is not closed in the weak-$\star$ topology, and thus not compact. This represents a fundamental difference with the finite case, where the set of extremal points is finite, and hence compact.
    
    \item The proposition above shows that any $\alpha$-LDP randomization can be factorized as the composition of two randomizations, one of which is an explicit extremal randomization $q^{(\mu)}$, fully described by a measure $\mu$. As in the discrete case, this representation will be instrumental in deriving upper bounds on the Fisher information of $\alpha$-LDP randomizations. Indeed, we can consider maximizing the Fisher information over all extremal randomizations, that is, over all possible measures $\mu$. However, since $\cE$ is not compact in general (in the weak-$\star$ topology), the set of sub-probability measures on $\cE$ is not compact either . This presents a major challenge in establishing the existence of a solution to the corresponding optimization problem, as discussed in detail before Proposition \ref{prop : maximisation pb convex}.
\end{enumerate}
\end{rem}

\subsection{Fisher information in the continuous case}{\label{Ss : Fisher cont}}
As in the discrete case, we now investigate how the factorization lemma can be leveraged to derive an expression for the maximal Fisher information. Again, addressing this problem requires some regularity assumptions on the statistical model.

Assume that $\cX$ is an open subset of $\R^d$ and that $(P_{\theta})_{\theta \in \Theta}$ is a family of probability measures on $\cX$, where $\Theta \subset \R^{d_\Theta}$. We suppose that this family is dominated by the Lebesgue measure and denote its density by $p_{\theta}(x) := \frac{dP_\theta(x)}{dx}$.

Let $\alpha > 0$ and let $\mu$ be a Radon sub-probability measure on $\cE$ satisfying \eqref{eq : cond norm mu continue}. We consider the privacy mechanism $q^{(\mu)}$ as introduced in Section \ref{ss: infinite}, given by $q^{(\mu)}_x(dr) = e_x(r)\mu(dr)$. Denote by $Z$ the associated public data, and by $(\widetilde{P}_{\theta})_{\theta \in \Theta}$ the induced model on $\cE$, i.e., the law of $Z$.

The following lemma shows that this model admits a continuous density and is differentiable in quadratic mean, with an explicit and continuous score function.
\begin{lem} \label{lem : expression tilde p et score ctn}
Let $\theta_0 \in \overset{\circ}{\Theta}$. The following statements hold:
\begin{enumerate}
	\item The probability measure $\widetilde{P}_{\theta}$ is absolutely continuous with respect to $\mu$ and its density is given by
	\begin{equation} \label{eq : tilde p ctn}
		\widetilde{p}_\theta(r) := \frac{d\widetilde{P}_{\theta}}{d\mu}(r)
		= \int_\cX r(x) p_\theta(x) dx.
	\end{equation}
	\item Assume that the model $(P_{\theta})_{\theta \in \Theta}$ is DQM at $\theta_0$ with score function $s_{\theta_0}$ such that $\E_{\theta_0}\left[\norm{s_{\theta_0}(X)}^2\right] < \infty$. Then the model $(\widetilde{P}_{\theta})_{\theta \in \Theta}$ is also DQM at $\theta_0$, and its score function is given by
	\begin{equation} \label{eq : score t ctn}
		t_{\theta_0}(r) = \frac{\int_\cX s_{\theta_0}(x) r(x)p_{\theta_0}(x) dx}{\widetilde{p}_{\theta_0}(r)} = (e^\alpha - 1) \frac{\int_{F_r^+} s_{\theta_0}(x) p_{\theta_0}(x) dx}{\widetilde{p}_{\theta_0}(r)}.
	\end{equation}
\item The functions $r \mapsto \widetilde{p}_{\theta_0}(r)$ and $r \mapsto t_{\theta_0}(r)$ are continuous on $\cE$.
\end{enumerate}
\end{lem}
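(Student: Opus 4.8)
The plan is to prove the three assertions in turn, items~1 and~3 being short and item~2 carrying the real content; throughout I write $\langle v,f\rangle:=\int_{\mathcal X}v(x)f(x)\,dx$ for $v\in L^\infty(\mathcal X)$ and $f\in L^1(\mathcal X,dx)$, the duality pairing that defines the weak-$\star$ topology on $\widehat B$. For item~1 I would compute the law of $Z$ directly: for $A\in\mathcal B(\mathcal E)$, $\widetilde P_\theta(A)=\int_{\mathcal X}q^{(\mu)}_x(A)\,p_\theta(x)\,dx=\int_{\mathcal X}\int_A e_x(r)\,\mu(dr)\,p_\theta(x)\,dx$; since $(x,r)\mapsto e_x(r)$ is $\mathcal B(\mathcal X)\otimes\mathcal B(\widehat B)$-measurable and nonnegative (Subsection~\ref{ss:point eval}) and $\mathcal E$ is measurable, Tonelli gives $\widetilde P_\theta(A)=\int_A\bigl(\int_{\mathcal X}e_x(r)p_\theta(x)\,dx\bigr)\mu(dr)$, and for each fixed $r$ Proposition~\ref{p: evaluation} yields $e_x(r)=r(x)$ for $dx$-a.e.\ $x$, so the inner integral equals $\int_{\mathcal X}r(x)p_\theta(x)\,dx\in[1,e^\alpha]$. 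This identifies $\widetilde P_\theta\ll\mu$ with density $\widetilde p_\theta$ as in~\eqref{eq : tilde p ctn}; the choice $A=\mathcal E$ together with~\eqref{eq : cond norm mu continue} confirms $\int_{\mathcal E}\widetilde p_\theta\,d\mu=1$.

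For item~2 the key observation is that $q^{(\mu)}$ is a genuine Markov kernel from $\mathcal X$ into the standard Borel space $(\mathcal E,\mathcal B(\mathcal E))$ admitting the density $x\mapsto e_x(r)$ with respect to the finite dominating measure $\mu$ (measurability from Subsection~\ref{ss:point eval}, and~\eqref{eq : cond norm mu continue} makes each $q^{(\mu)}_x$ a probability measure). I would then invoke the transfer principle for Markov kernels---Lemma~3.1 of~\cite{steinbergerEfficiencyLocalDifferent}, the randomized extension of Proposition~A.5 of~\cite{Bic93} recalled around~\eqref{eq : score public}, whose hypotheses hold since $\E_{\theta_0}\left[\norm{s_{\theta_0}(X)}^2\right]<\infty$---to conclude that $(\widetilde P_\theta)_\theta$ is DQM at $\theta_0$ with score $t_{\theta_0}(r)=\E[s_{\theta_0}(X)\mid Z=r]$. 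Making this explicit by disintegration: under $P_{\theta_0}$ the pair $(X,Z)$ has joint law $p_{\theta_0}(x)e_x(r)\,dx\,\mu(dr)$ and $Z$ has marginal $\widetilde p_{\theta_0}(r)\mu(dr)$, so $t_{\theta_0}(r)=\widetilde p_{\theta_0}(r)^{-1}\int_{\mathcal X}s_{\theta_0}(x)e_x(r)p_{\theta_0}(x)\,dx=\widetilde p_{\theta_0}(r)^{-1}\int_{\mathcal X}s_{\theta_0}(x)r(x)p_{\theta_0}(x)\,dx$; substituting $r(x)=1+(e^\alpha-1)\indi{F^+_r}(x)$ from~\eqref{eq: def r(x)} and using $\int_{\mathcal X}s_{\theta_0}(x)p_{\theta_0}(x)\,dx=\E_{\theta_0}[s_{\theta_0}(X)]=0$ (by~\eqref{eq: score centered}) gives the second form in~\eqref{eq : score t ctn}. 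If one prefers not to quote the transfer lemma at this level of generality, the same conclusion follows by reproducing the Bickel--Le~Cam argument: transport the Hellinger expansion $\sqrt{p_{\theta_0+h}}=\sqrt{p_{\theta_0}}+\tfrac12 h^\top s_{\theta_0}\sqrt{p_{\theta_0}}+\xi_h$ with $\|\xi_h\|_{L^2(dx)}=o(\|h\|)$ through the positive linear contraction $T:L^1(\mathcal X,dx)\to L^1(\mathcal E,\mu)$, $(Tf)(r):=\int_{\mathcal X}r(x)f(x)\,dx$ (which maps densities to densities by~\eqref{eq : cond norm mu continue}), controlling $\sqrt{\widetilde p_{\theta_0+h}}-\sqrt{\widetilde p_{\theta_0}}$ via $\sqrt a-\sqrt b=(a-b)/(\sqrt a+\sqrt b)$ and Cauchy--Schwarz. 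I expect this item to be the main obstacle: if the cited lemma does not literally cover the abstract output space $\mathcal E$, the delicate point in the self-contained route is the familiar passage from $L^1$-control of the density increments to $L^2$-control of the square-root increments; in either case the one genuinely nontrivial external input is Proposition~\ref{p: evaluation}.

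Item~3 becomes soft once the pairing structure is recognized. By item~1, $\widetilde p_{\theta_0}(r)=\langle r,p_{\theta_0}\rangle$ with $p_{\theta_0}\in L^1(\mathcal X,dx)$, so $r\mapsto\widetilde p_{\theta_0}(r)$ is $d_\star$-continuous on $\widehat B$ by the very definition of the weak-$\star$ topology, in particular on $\mathcal E$. Since $\int_{\mathcal X}\norm{s_{\theta_0}(x)}p_{\theta_0}(x)\,dx=\E_{\theta_0}\left[\norm{s_{\theta_0}(X)}\right]\le\E_{\theta_0}\left[\norm{s_{\theta_0}(X)}^2\right]^{1/2}<\infty$, the vector field $s_{\theta_0}p_{\theta_0}$ lies (componentwise) in $L^1(\mathcal X,dx)$, so the numerator $r\mapsto\langle r,s_{\theta_0}p_{\theta_0}\rangle=\int_{\mathcal X}s_{\theta_0}(x)r(x)p_{\theta_0}(x)\,dx$ is $d_\star$-continuous as well. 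Finally $\widetilde p_{\theta_0}(r)=\int_{\mathcal X}r(x)p_{\theta_0}(x)\,dx\ge\int_{\mathcal X}p_{\theta_0}(x)\,dx=1$ because $r(x)\ge1$ a.e., so the denominator is continuous and bounded away from $0$, whence $t_{\theta_0}=\langle\,\cdot\,,s_{\theta_0}p_{\theta_0}\rangle/\widetilde p_{\theta_0}$ is continuous on $\mathcal E$ (in fact on all of $\theconvex$).
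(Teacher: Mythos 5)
Your proposal follows essentially the same route as the paper's proof: item~1 by direct computation with Tonelli and $e_x(r)=r(x)$ a.e., item~2 by invoking Lemma~3.1 of Steinberger (the randomized DQM-transfer result) together with the centering of $s_{\theta_0}$ and the expression~\eqref{eq: def r(x)}, and item~3 by recognizing both $\widetilde p_{\theta_0}$ and the numerator of $t_{\theta_0}$ as weak-$\star$ continuous pairings with $L^1$ functions and bounding the denominator below by $1$. Your extra sketch of a self-contained Bickel--Le~Cam argument for item~2 is a sensible fallback but is not needed; the paper simply cites the Steinberger lemma as you do in your primary route.
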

\begin{proof}
\begin{enumerate}
	\item The law of $Z$ is given by
	\[
		\widetilde{P}_{\theta}(dr) = \int_\cX q^{(\mu)}_x(dr) p_\theta(x)dx = \int_\cX e_x(r) \mu(dr) p_\theta(x)dx = \left( \int_\cX r(x) p_\theta(x) dx \right) \mu(dr),
	\]
	where we used that $e_x(r) = r(x)$ $dx$-a.e. for $r \in \cE$. Hence, the density with respect to $\mu$ is as in \eqref{eq : tilde p ctn}.
	\item Applying Lemma 3.1 in \cite{steinbergerEfficiencyLocalDifferent}, we know that the model $(P_\theta)$ is DQM with score function $s_{\theta_0}$. From Lemma A1 in the same reference, we obtain:
	\[
		t_{\theta_0}(r) = \frac{1}{\widetilde{p}_{\theta_0}(r)} \int_\cX s_{\theta_0}(x) e_x(r) p_{\theta_0}(x) dx = \frac{\int_\cX s_{\theta_0}(x) r(x) p_{\theta_0}(x) dx}{\widetilde{p}_{\theta_0}(r)}.
	\]
    Recalling that $r(x)$ is as in \eqref{eq: def r(x)} and that $\int_{\mathcal{X}} s_{\theta_0}(x)p_{\theta_0}(x) dx = 0$ we deduce the second formula.
	\item The continuity of $r \mapsto \widetilde{p}_{\theta_0}(r)$ follows from \eqref{eq : tilde p ctn}, the integrability of $p_{\theta_0}$, and the definition of the weak-$\star$ topology on $\cE$. Analogously, the continuity of $r \mapsto t_{\theta_0}(r)$ follows from \eqref{eq : score t ctn} and the integrability of $s_{\theta_0}(x)p_{\theta_0}(x)$.
\end{enumerate}
\end{proof}
We now aim to use the factorization lemma to study the %maximal 
information $\cI_{\theta_0}(q\circ \cP)$ that can be extracted from an $\alpha$-LDP view of the private data through a randomisation mechanism $q$ from $\cX$ to some finite or countable output space $\cZ$.
%To this end, define
%\begin{equation*}
%	\cJ^{\text{max},\alpha}_{\theta_0} := \sup_{q \in \cQ_\alpha} \cI_{\theta_0}(q \circ \cP),
%\end{equation*}
%where the supremum runs over all $\alpha$-LDP mechanisms $q$ from $\cX$ to some finite or countable output space $\cZ$. 
Using the factorization result in Proposition \ref{prop : factor ctn}, we obtain the inequality:
\begin{align}{\label{eq: fact on Fisher cont}}
	\mathcal{I}_{\theta_0}(q \circ \mathcal{P}) 
	&= \mathcal{I}_{\theta_0}((q^{(2)} \circ q^{(\mu)}) \circ \mathcal{P}) \\
	&= \mathcal{I}_{\theta_0}(q^{(2)} \circ (q^{(\mu)} \circ \mathcal{P})) 
	\le \mathcal{I}_{\theta_0}(q^{(\mu)} \circ \mathcal{P}), \nonumber
\end{align}
where we used \eqref{eq : decroissance Fisher}.
Therefore, to study the maximization of the Fisher information, we may restrict our attention to extremal mechanisms of the form $q = q^{(\mu)}$. Thanks to Lemma \ref{lem : expression tilde p et score ctn}, we can write the Fisher information explicitly:
\begin{align}\nonumber
	\cI_{\theta_0}(q^{(\mu)}\circ \cP)
	&= \E_{\theta_0} \left[ t_{\theta_0}(Z) t_{\theta_0}(Z)^T \right] = \int_\cE t_{\theta_0}(r) t_{\theta_0}(r)^T \widetilde{p}_{\theta_0}(r) \mu(dr) \\
	&=\nonumber  \int_{\cE} 
	\frac{\left( \int_\cX s_{\theta_0}(x) r(x) p_{\theta_0}(x) dx \right)
	\left( \int_\cX s_{\theta_0}(x) r(x) p_{\theta_0}(x) dx \right)^T}
	{\widetilde{p}_{\theta_0}(r)} \mu(dr) \\
    & =\label{eq : Fisher ctn general_preli}(e^\alpha - 1)^2 \int_{\cE} 
	\frac{\left( \int_{F_r^+} s_{\theta_0}(x) p_{\theta_0}(x) dx \right)
	\left( \int_{F_r^+} s_{\theta_0}(x)p_{\theta_0}(x) dx \right)^T}
	{\widetilde{p}_{\theta_0}(r)} \mu(dr).
\end{align}
This representation is a key technical tool to derive the next theorem, which provides general lower and upper bounds for the Fisher information under $\alpha$-LDP constraints, valid for any $\alpha > 0$, and characterizes its behavior in the asymptotic regime $\alpha \rightarrow 0$.

If the parameter space is one-dimensional, the Fisher information is a scalar and we can define
\begin{equation*}
	\cJ^{\text{max},\alpha}_{\theta_0} := \sup_{q \in \cQ_\alpha} \cI_{\theta_0}(q \circ \cP),
\end{equation*}
where the supremum runs over all $\alpha$-LDP mechanisms $q$ from $\cX$ to some finite or countable output space $\cZ$.

\begin{thm} \label{thm : equivalent alpha petit ctn}
Assume that $\Theta \subset \R$ and that the model $(P_{\theta})_{\theta \in \Theta}$ is DQM at $\theta_0$ with score function $s_{\theta_0}$ such that $\E_{\theta_0}\left[\norm{s_{\theta_0}(X)}^2\right] < \infty$.
	Then we have, for any $\alpha>0$,
	\begin{equation} \label{eq :  encadrement Fisher ctn small alpha}
 \frac{(e^\alpha-1)^2}{2e^\alpha(1+e^\alpha)}
 %	\frac{(e^\alpha-1)^2}{1+e^\alpha} \frac{1}{2e^\alpha} 
\left( \int_\cX \abs{s_{\theta_0}(x)}p_{\theta_0}(x)dx \right)^2
\le
		\cJ^{\text{max},\alpha}_{\theta_0} \le \frac{(e^\alpha-1)^2}{4}
		\left( \int_\cX \abs{s_{\theta_0}(x)}p_{\theta_0}(x)dx \right)^2.	
	\end{equation}
	Consequently,
\begin{equation*}
\cJ^{\text{max},\alpha}_{\theta_0} \sim_{\alpha\to0} \frac{\alpha^2}{4}
	\left( \int_\cX \abs{s_{\theta_0}(x)}p_{\theta_0}(x)dx \right)^2.		
\end{equation*}	
\end{thm}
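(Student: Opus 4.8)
The plan is to establish the two bounds separately, using the factorization of Proposition~\ref{prop : factor ctn} together with the explicit Fisher information representation~\eqref{eq : Fisher ctn general_preli}, and then deduce the asymptotic equivalence by a Taylor expansion of the prefactors as $\alpha \to 0$. First, for the \emph{upper bound}, by~\eqref{eq: fact on Fisher cont} it suffices to bound $\cI_{\theta_0}(q^{(\mu)}\circ\cP)$ for an arbitrary extremal mechanism $q^{(\mu)}$. From~\eqref{eq : Fisher ctn general_preli}, in the one-dimensional case each summand in the integral over $\cE$ has the form $(e^\alpha-1)^2 \bigl(\int_{F_r^+} s_{\theta_0} p_{\theta_0}\bigr)^2/\widetilde p_{\theta_0}(r)$. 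Since $\widetilde p_{\theta_0}(r)=\int_\cX r(x)p_{\theta_0}(x)dx \ge \int_\cX p_{\theta_0}(x)dx = 1$ (because $r(x)\ge 1$ a.e.), and since as in Remark~\ref{rk: choice Fmax} we have $\bigl(\int_{F_r^+} s_{\theta_0} p_{\theta_0}\bigr)^2 \le \bigl(\tfrac12\int_\cX |s_{\theta_0}|p_{\theta_0}\bigr)^2$ for any measurable $F_r^+$, each summand is at most $\tfrac{(e^\alpha-1)^2}{4}\bigl(\int_\cX|s_{\theta_0}|p_{\theta_0}\bigr)^2 \cdot \widetilde p_{\theta_0}(r)^{-1}$. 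Wait—one must be careful here: the bound $\widetilde p_{\theta_0}(r)\ge 1$ is used in the denominator, but then $\int_\cE \mu(dr)\le 1$ by~\eqref{eq: bound mu E}, so integrating gives $\cI_{\theta_0}(q^{(\mu)}\circ\cP)\le \tfrac{(e^\alpha-1)^2}{4}\bigl(\int_\cX|s_{\theta_0}|p_{\theta_0}\bigr)^2 \cdot \mu(\cE)\le \tfrac{(e^\alpha-1)^2}{4}\bigl(\int_\cX|s_{\theta_0}|p_{\theta_0}\bigr)^2$, which is exactly the claimed upper bound. Taking the supremum over $q$ yields the right inequality in~\eqref{eq :  encadrement Fisher ctn small alpha}.

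For the \emph{lower bound}, the natural strategy is to exhibit a single explicit extremal mechanism achieving the stated value, mirroring the construction in Remark~\ref{rk: choice Fmax}. Let $F_{\max}^+=\{x: s_{\theta_0}(x)>0\}$ and $F_{\max}^{\prime+}=\{x: s_{\theta_0}(x)<0\}$, and let $r^+, r^-\in\cE$ be the indicator-type functions $r^\pm(x)=1+(e^\alpha-1)\indi{F^+_{\max}}(x)$ and $1+(e^\alpha-1)\indi{F^{\prime+}_{\max}}(x)$ respectively (here I should note that the set where $s_{\theta_0}=0$ can be handled by assigning it to either level or is $p_{\theta_0}$-null in typical models; one may simply split it arbitrarily). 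Put $\mu = \omega^+\delta_{r^+}+\omega^-\delta_{r^-}$ with weights chosen so that the normalization~\eqref{eq : cond norm mu continue} holds, i.e.\ $\int_\cE e_x(r)\mu(dr) = \omega^+ r^+(x)+\omega^- r^-(x)=1$ a.e. Since $F_{\max}^+$ and $F_{\max}^{\prime+}$ partition $\cX$ (up to a null set), this forces $\omega^+ e^\alpha + \omega^- = 1$ on $F_{\max}^+$ and $\omega^+ + \omega^- e^\alpha = 1$ on $F_{\max}^{\prime+}$, whence $\omega^+=\omega^-=\tfrac{1}{1+e^\alpha}$. Plugging this $\mu$ into~\eqref{eq : Fisher ctn general_preli} and using $\int_\cX s_{\theta_0}p_{\theta_0}=0$ so that $\int_{F_{\max}^+}s_{\theta_0}p_{\theta_0} = -\int_{F_{\max}^{\prime+}}s_{\theta_0}p_{\theta_0}=\tfrac12\int_\cX|s_{\theta_0}|p_{\theta_0}=:\tfrac12 J$, one computes $\widetilde p_{\theta_0}(r^+)=\int r^+ p_{\theta_0}= 1+(e^\alpha-1)n_{\max}$ and $\widetilde p_{\theta_0}(r^-)=1+(e^\alpha-1)(1-n_{\max})$ with $n_{\max}=\int_{F_{\max}^+}p_{\theta_0}$. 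The Fisher information becomes $(e^\alpha-1)^2\bigl(\tfrac{J}{2}\bigr)^2\bigl[\tfrac{\omega^+}{\widetilde p_{\theta_0}(r^+)}+\tfrac{\omega^-}{\widetilde p_{\theta_0}(r^-)}\bigr]$. I would then lower-bound the bracket: since $\omega^+=\omega^-=\tfrac1{1+e^\alpha}$ and each $\widetilde p_{\theta_0}(r^\pm)\le e^\alpha$ (as $r^\pm\le e^\alpha$), the bracket is at least $\tfrac{2}{(1+e^\alpha)e^\alpha}$, giving $\cJ^{\max,\alpha}_{\theta_0}\ge (e^\alpha-1)^2\tfrac{J^2}{4}\cdot\tfrac{2}{(1+e^\alpha)e^\alpha}=\tfrac{(e^\alpha-1)^2}{2e^\alpha(1+e^\alpha)}J^2$, the left inequality in~\eqref{eq :  encadrement Fisher ctn small alpha}.

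Finally, the asymptotic equivalence follows by sandwiching: as $\alpha\to 0$, $(e^\alpha-1)^2\sim\alpha^2$, $\tfrac{1}{2e^\alpha(1+e^\alpha)}\to\tfrac14$, and $\tfrac14$ is constant, so both bounds are $\sim \tfrac{\alpha^2}{4}J^2 = \tfrac{\alpha^2}{4}\bigl(\int_\cX|s_{\theta_0}(x)|p_{\theta_0}(x)dx\bigr)^2$, and hence so is $\cJ^{\max,\alpha}_{\theta_0}$. The main obstacle I anticipate is not the algebra but the measure-theoretic bookkeeping: ensuring that $r^+, r^-$ genuinely lie in $\cE$ as elements of $L^\infty$ (the null set $\{s_{\theta_0}=0\}$, and the "a.e." qualifiers in the normalization~\eqref{eq : cond norm mu continue} and in $e_x(r)=r(x)$), and verifying carefully that the two-atom measure $\mu$ indeed satisfies the hypotheses under which~\eqref{eq : Fisher ctn general_preli} and Lemma~\ref{lem : expression tilde p et score ctn} were derived — in particular that $\widetilde p_{\theta_0}\ge 1>0$ so all the Radon–Nikodym densities and score expressions are well defined. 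A secondary subtlety is that the supremum defining $\cJ^{\max,\alpha}_{\theta_0}$ ranges over \emph{all} $\alpha$-LDP mechanisms to arbitrary countable $\cZ$, but the factorization inequality~\eqref{eq: fact on Fisher cont} reduces this cleanly to extremal mechanisms, so no additional work is needed there.
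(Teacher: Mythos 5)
Your proposal is correct and takes essentially the same route as the paper: factorization plus \eqref{eq: fact on Fisher cont} to reduce to extremal mechanisms, then the representation \eqref{eq : Fisher ctn general_preli}, the bound $\bigl(\int_{F_r^+}s_{\theta_0}p_{\theta_0}\bigr)^2\le\tfrac14\bigl(\int_\cX|s_{\theta_0}|p_{\theta_0}\bigr)^2$ together with $\widetilde p_{\theta_0}(r)\ge 1$ and $\mu(\cE)\le 1$ for the upper bound, and the two-atom measure $\mu=\tfrac1{1+e^\alpha}(\delta_{r^+}+\delta_{r^-})$ with $\widetilde p_{\theta_0}(r^\pm)\le e^\alpha$ for the lower bound. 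The measure-theoretic points you flag are handled in the paper simply by defining $F^{\prime+}_{\max}:=\{x:s_{\theta_0}(x)\le 0\}$ (so the partition is exact without worrying about $\{s_{\theta_0}=0\}$), and since $\mu$ is supported on two points the mechanism is effectively binary-valued so it legitimately lies in $\cQ_\alpha$ as required by the definition of $\cJ^{\max,\alpha}_{\theta_0}$.
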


\begin{rem}
We can compare Theorem~\ref{th : info cas fini} in the finite case with Theorem~\ref{thm : equivalent alpha petit ctn}, for the continuous setting. Theorem~\ref{th : info cas fini} provides the exact value of the optimal Fisher information in the discrete case, whereas Theorem~\ref{thm : equivalent alpha petit ctn} gives an asymptotic characterization. These two results are consistent with each other. Let us emphasize that the $\alpha$-LDP mechanism introduced in the proof of the lower bound in Theorem~\ref{thm : equivalent alpha petit ctn} is precisely the continuous analogue of the mechanism used for the lower bound in the discrete case, in Proposition~\ref{prop : borne inf Fisher fini}.
\end{rem}

\begin{rem}
It is interesting to note that, in the case where the score function is given by $s = \frac{\dot{p}_{\theta_0}}{p_{\theta_0}}$ 
and we consider a Gaussian distribution, the quantity $\int_{\mathcal{X}} \abs{s_{\theta_0}(x)} \, p_{\theta_0}(x) \, dx$
is easily computable and equals \( \sqrt{\frac{2}{\pi}} \). We can compare this with the results in \cite{KalSte} concerning the efficient estimation of a Gaussian mean under local differential privacy. In that work, the optimal Fisher information is shown to be 
$\frac{2}{\pi}\frac{(e^\alpha - 1)^2}{(e^\alpha + 1)^2}$.
Notably, in the limit as \( \alpha \to 0 \), one recovers the same result as in our calculation. It is also interesting to compare this with the exact maximal Fisher information derived in the discrete case, in Equation \eqref{eq : def I max caf fini}. Specifically, for a Gaussian variable, it is \( n_{\max} = \frac{1}{2} \), which simplifies the denominator to $\left( \frac{e^\alpha + 1}{2} \right)^2,$
leading to the same optimal Fisher information as reported in \cite{KalSte}. When \( \alpha \) is close to zero, this denominator is asymptotically equivalent to 1, which explains why neglecting it in \eqref{eq : def I max caf fini} still provides the correct asymptotic result. %However, for general values of \( \alpha \), the denominator is no longer negligible. The difficulty is that, in the continuous setting, it is not possible to analyze the denominator in as much detail as in the discrete case (see Remark \ref{rk} for further discussion).

\end{rem}
\begin{rem}
	The representation \eqref{eq : Fisher ctn general_preli} is valid for multidimensional parameters. However, it is not clear how results like Theorems \ref{th : info cas fini}--\ref{thm : equivalent alpha petit ctn} can be extended to this setting. In particular, the definition of the set $F^+_\text{max}$ introduced in Remark \ref{rk: choice Fmax} does not appear to generalize straightforwardly in the multidimensional case.
\end{rem}

We now move to the proof that, for any regular statistical model and given $\alpha > 0$, there exists an extremal mechanism $q^{(\mu)}$ which maximizes the Fisher information.

The key observation is that the continuous version of the maximization problem given in~\eqref{eq: opt 14.75} admits a solution. This is stated formally in the proposition below, the proof of which is provided in Section~\ref{ss: proof cont}. 
%The proof relies on tools from set theory, such as Zorn's lemma, along with convexity arguments. Additional details are provided in Remark \ref{rk: E not closed}.

\begin{prop} \label{prop : maximisation pb convex}
Let $j : \begin{cases} \theconvex \to \R_+ \\ r \mapsto j(r) \end{cases}$ be a bounded, continuous, and convex function. Consider the optimization problem:
\begin{equation} \label{eq : optimisation pb ctn J}
	J^* := \sup_{\mu} \int_\cE j(r) \, \mu(dr), \quad
	\text{subject to } \int_\cE e_x(r) \, \mu(dr) = 1 \quad \text{for $dx$-almost every } x.
\end{equation}
where the supremum is taken over all Radon sub-probability measures $\mu$ on $\cE$.

Then, there exists a Radon sub-probability measure $\overline{\mu}$ on $\cE$ such that
\[
	J^* = \int_\cE j(r) \, \overline{\mu}(dr)
	\quad \text{and} \quad
	\int_\cE e_x(r) \, \overline{\mu}(dr) = 1 \quad \text{for $dx$-almost every } x.
\]
\end{prop}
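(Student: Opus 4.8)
The natural strategy is a compactness argument in the weak-$\star$ topology, but carried out at the level of measures rather than at the level of $\theconvex$. The key difficulty flagged in the paper is that $\cE$ is \emph{not} closed in the weak-$\star$ topology, so the set of Radon sub-probability measures on $\cE$ is not weak-$\star$ compact, and a maximizing sequence of measures $(\mu_n)$ need not have a limit supported on $\cE$. My plan is therefore to first relax the problem to measures on the whole compact set $\theconvex$, solve the relaxed problem by a standard direct method, and then push the optimal relaxed measure back onto $\cE$ using convexity of $j$ together with a disintegration / barycenter argument (a Choquet-type step).

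\textbf{Step 1: the relaxed problem.} Consider
\begin{equation*}
\widehat{J} := \sup_{\nu}\int_{\theconvex} j(r)\,\nu(dr),\qquad \text{subject to}\quad \int_{\theconvex} e_x(r)\,\nu(dr)=1\ \text{for $dx$-a.e. }x,
\end{equation*}
where $\nu$ ranges over all nonnegative Radon measures on $\theconvex$ with $\nu(\theconvex)\le 1$. Since $\cE\subset\theconvex$, we have $\widehat{J}\ge J^*$. The set of such $\nu$ is a subset of the unit ball of $\mathcal{M}(\theconvex)=C(\theconvex)^*$ (here $\theconvex$ is a compact metric space, so $C(\theconvex)$ is separable and the weak-$\star$ topology on bounded sets of $\mathcal{M}(\theconvex)$ is metrizable), hence weak-$\star$ sequentially compact. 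The constraint set is weak-$\star$ closed: the map $\nu\mapsto\int_{\theconvex}e_x(r)\,\nu(dr)$ is, for each fixed $x$, continuous for the weak-$\star$ topology because $r\mapsto e_x(r)$ is continuous on $\theconvex$ (this needs the measurability/continuity properties of the evaluation operator from Proposition~\ref{p: evaluation}; for a.e.\ $x$, $e_x$ agrees with a weak-$\star$-continuous linear functional), and a countable dense set of test functions $x$ suffices to characterize the a.e.\ constraint. The objective $\nu\mapsto\int j\,d\nu$ is weak-$\star$ continuous since $j\in C(\theconvex)$ and $j$ is bounded. A maximizing sequence $(\nu_n)$ thus has a weak-$\star$ convergent subsequence with limit $\overline{\nu}$, which is admissible and attains $\widehat{J}$.

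\textbf{Step 2: from $\theconvex$ back to $\cE$.} Now use that $j$ is convex and $\cE$ is (by Lemma~\ref{lem : extremal points}) exactly the set of extreme points of $\theconvex$. Apply Choquet's theorem (Theorem~\ref{th: Choquet}) to each $r\in\theconvex$: there is a probability measure $\lambda_r$ on $\cE$ with barycenter $r$, i.e.\ $r=\int_\cE \rho\,\lambda_r(d\rho)$ in the Bochner sense, and by the same reasoning as in the proof of Proposition~\ref{prop : factor ctn}, $e_x(r)=\int_\cE e_x(\rho)\,\lambda_r(d\rho)$ for a.e.\ $x$. One can choose $r\mapsto\lambda_r$ measurably (measurable selection of Choquet measures on a metrizable Choquet simplex — or, more safely, invoke a measurable disintegration theorem so that the map is defined $\overline{\nu}$-a.e., which is all we need). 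Define the pushforward-by-disintegration measure
\begin{equation*}
\overline{\mu}(A) := \int_{\theconvex}\lambda_r(A)\,\overline{\nu}(dr),\qquad A\in\mathcal{B}(\cE).
\end{equation*}
Then $\overline{\mu}$ is a nonnegative Radon measure on $\cE$ with $\overline{\mu}(\cE)=\int\lambda_r(\cE)\,\overline{\nu}(dr)=\overline{\nu}(\theconvex)\le 1$, so it is a sub-probability measure. The constraint transfers: for a.e.\ $x$,
\begin{equation*}
\int_\cE e_x(\rho)\,\overline{\mu}(d\rho)=\int_{\theconvex}\Bigl(\int_\cE e_x(\rho)\,\lambda_r(d\rho)\Bigr)\overline{\nu}(dr)=\int_{\theconvex}e_x(r)\,\overline{\nu}(dr)=1.
\end{equation*}
Finally, by Jensen's inequality (here is where convexity of $j$ is essential), $j(r)=j\bigl(\int_\cE\rho\,\lambda_r(d\rho)\bigr)\le\int_\cE j(\rho)\,\lambda_r(d\rho)$, and integrating against $\overline{\nu}$ gives
\begin{equation*}
\widehat{J}=\int_{\theconvex}j(r)\,\overline{\nu}(dr)\le\int_{\theconvex}\int_\cE j(\rho)\,\lambda_r(d\rho)\,\overline{\nu}(dr)=\int_\cE j(\rho)\,\overline{\mu}(d\rho)\le J^*.
\end{equation*}
Combined with $J^*\le\widehat{J}$ from Step 1, all inequalities are equalities, so $J^*=\int_\cE j\,d\overline{\mu}$ and $\overline{\mu}$ is admissible and optimal, as claimed.

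\textbf{Main obstacle.} The hard part is not the soft compactness argument of Step 1 but the measurability in Step 2: making sense of $r\mapsto\lambda_r$ as a genuinely measurable (or $\overline{\nu}$-measurably defined) family of Choquet representing measures, and justifying Jensen's inequality in the Bochner/weak-$\star$ setting with the evaluation operator (which, as the paper stresses, is not weak-$\star$ continuous as a function of $x$). These points presumably use the separability of $\theconvex$ and the Bochner-integral machinery already set up for Proposition~\ref{p: evaluation} and the continuous factorization lemma; one must be careful that Jensen applied through $e_x$ holds only for a.e.\ $x$, and that the resulting a.e.\ statements can be intersected over a countable dense family of $x$'s to recover the constraint in the required form.
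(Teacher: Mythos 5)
Your overall strategy — relax to measures on the compact set $\theconvex$, solve by weak-$\star$ compactness, then push the optimum back to $\cE$ using convexity of $j$ — is the same as the paper's, but the "push back" step is done differently and this is where a genuine gap sits. The paper avoids the measurable-selection issue entirely: it uses the Choquet partial order $\le$ on $\mathfrak{M}^+$ from \cite{Choquet} (same total mass, same barycenter, $\int f\,d\mu_1\le\int f\,d\mu_2$ for all continuous convex $f$), applies Zorn's lemma to get a maximal element $\overline{\mu}$ dominating $\mu^\star$, and then invokes Choquet's Lemma~5 which says any maximal element is supported on the extreme points $\cE$. The constraint and the objective inequality are then immediate from the definition of $\le$. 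Your route instead picks a Choquet representing measure $\lambda_r$ for each $r$ and mixes them over $\overline{\nu}$. For that to define a measure you need $r\mapsto\lambda_r$ to be measurable, and you flag this yourself as the ``main obstacle'' without resolving it. A measurable selection of Choquet representing measures on a metrizable compact convex set does exist, but it is a nontrivial theorem that the paper deliberately routes around; asserting it without a citation or proof leaves a real hole, and it is precisely the hole the paper's Zorn-plus-Lemma-5 argument is designed to plug.

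There is also an error in Step~1. You claim that for fixed $x$ the map $r\mapsto e_x(r)$ is continuous on $\theconvex$ and hence that $\nu\mapsto\int e_x\,d\nu$ is weak-$\star$ continuous. This is false and is explicitly pointed out in the paper (Section~\ref{ss:point eval}): pointwise evaluation is not a continuous linear functional on $L^\infty$ in the weak-$\star$ topology, which is exactly why the operator $e_x$ has to be defined as a limsup of ball averages and why Proposition~\ref{p: evaluation} is needed at all. The correct argument, which the paper uses, is to test the constraint not pointwise in $x$ but against $h\in L^1(\cX)$: the functional $r\mapsto\int_\cX h(x)\,r(x)\,dx$ \emph{is} weak-$\star$ continuous by the very definition of that topology, weak convergence $\mu_{k(n)}\rightharpoonup\mu^\star$ then passes inside, and Fubini plus the identity $e_x(r)=r(x)$ a.e.\ recovers the a.e.-in-$x$ constraint for the limit. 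Your parenthetical that ``$e_x$ agrees with a weak-$\star$-continuous linear functional for a.e.\ $x$'' does not rescue the argument: for no fixed $x$ is $e_x$ a weak-$\star$-continuous functional; continuity holds only after integrating against an $L^1$ weight.
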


\begin{rem}{\label{rk: E not closed}}
The proof of the above proposition relies on the compactness of $\theconvex$ under the weak-$\star$ topology (see Lemma~\ref{l: B closed}), which allows one to establish existence of a solution over $\theconvex$. However, the set $\mathcal{E}$ is not closed in general (see a counter example in Remark~\ref{rem : E not closed}), which prevents us from directly concluding that the solution to the maximization problem is supported in $\mathcal{E}$ via this route.

Nevertheless, we are able to construct an optimal measure supported on $\mathcal{E}$ by using convexity arguments and several tools from set theory, including techniques akin to those used in the proof of Choquet’s theorem, as developed in~\cite{Choquet}, whose main tool to derive the existence of measures supported on $\cE$ is the 
%use of 
Zorn's lemma.
\end{rem}

We now apply Proposition~\ref{prop : maximisation pb convex} to the problem of maximizing the Fisher information, in order to deduce that the corresponding optimization problem admits a solution. 

To simplify the notation, let us define, for any $r \in \theconvex$,
\begin{equation} \label{eq : def i r ctn convex}
	i(r) := \frac{ \left( \int_\cX  s_{\theta_0}(x) \, r(x) \, p_{\theta_0}(x) \, dx \right)^2 }{
		\int_{\cX} r(x) \, p_{\theta_0}(x) \, dx } .
\end{equation}
For $r \in \cE$, by using  \eqref{eq: def r(x)}, we have
\begin{equation}		\label{eq : def i r ctn extremal}
	i(r)	= (e^\alpha - 1)^2 \frac{ \left( \int_{F_r^+}  s_{\theta_0}(x) \, p_{\theta_0}(x) \, dx \right)^2 }{ 1 + (e^\alpha - 1)
		\int_{F_{r}^+} p_{\theta_0}(x) \, dx }.
\end{equation}
 Observe that the quantity multiplying \( (e^\alpha - 1)^2 \) above plays exactly the same role as \( i_\beta \) defined in \eqref{eq : def i beta}, in the finite case.
We are thus led to consider the following optimization problem:
\begin{equation} \label{eq : optimisation pb ctn}
	I^* := \sup_{\mu} \int_\cE i(r) \, \mu(dr), \quad
	\text{subject to } \int_\cE e_x(r) \, \mu(dr) = 1 \quad \text{for $dx$-almost every } x,
\end{equation}
where the supremum is taken over all Radon sub-probability measures $\mu$ on $\cE$.

By Proposition~\ref{prop : maximisation pb convex}, we deduce that the above optimization problem admits a solution. Its applicability is formalized in the following theorem, whose proof can be found in Subsection~\ref{ss: proof cont}. 

\begin{thm}{\label{thm: main opt cont}}
There exists a Radon sub-probability measure $\overline{\mu}$ on $\cE$ such that the mechanism defined by $q^{(\overline{\mu})}(dr) := e_x(r) \, \overline{\mu}(dr)$ satisfies
\begin{equation*}
	\cJ^{\text{max},\alpha}_{\theta_0} = \cI_{\theta_0}\left( q^{(\overline{\mu})} \circ \cP \right).
\end{equation*}
\end{thm}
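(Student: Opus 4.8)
The plan is to reduce the maximisation over $\cQ_\alpha$ to the purely measure-theoretic optimisation \eqref{eq : optimisation pb ctn}, solve the latter with Proposition~\ref{prop : maximisation pb convex}, and then check that the extremal mechanism attached to the optimal measure is admissible and attains the supremum. First I would establish the upper bound $\cJ^{\text{max},\alpha}_{\theta_0}\le I^\star$: for an arbitrary $q\in\cQ_\alpha$, Proposition~\ref{prop : factor ctn} yields a factorisation $q=q^{(2)}\circ q^{(\mu)}$ with $\mu$ a Radon sub-probability measure on $\cE$ obeying the normalisation \eqref{eq : cond norm mu continue}; the data-processing inequality \eqref{eq: fact on Fisher cont} together with the explicit formula \eqref{eq : Fisher ctn general_preli} then gives $\cI_{\theta_0}(q\circ\cP)\le\cI_{\theta_0}(q^{(\mu)}\circ\cP)=\int_\cE i(r)\,\mu(dr)\le I^\star$, with $i$ as in \eqref{eq : def i r ctn convex}--\eqref{eq : def i r ctn extremal}, and taking the supremum over $q$ proves the claim.

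Next I would apply Proposition~\ref{prop : maximisation pb convex} with $j:=i|_{\theconvex}$, for which the three hypotheses must be verified. Boundedness holds because the denominator $\int_\cX r(x)p_{\theta_0}(x)\,dx\ge 1$ (as $r\ge 1$ a.e.\ and $p_{\theta_0}$ is a density) while the numerator is at most $\big(e^\alpha\,\E_{\theta_0}[\abs{s_{\theta_0}(X)}]\big)^2<\infty$ under the assumption $\E_{\theta_0}[\norm{s_{\theta_0}(X)}^2]<\infty$. Weak-$\star$ continuity of $i$ on $\theconvex$ follows from the weak-$\star$ continuity of $r\mapsto\int_\cX s_{\theta_0}(x)r(x)p_{\theta_0}(x)\,dx$ and of $r\mapsto\int_\cX r(x)p_{\theta_0}(x)\,dx$ (their integrands lying in $L^1(\cX)$) combined with the uniform lower bound on the denominator; this is precisely the continuity already recorded in Lemma~\ref{lem : expression tilde p et score ctn}(3). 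Convexity of $i$ is the observation that $i(r)=\ell(r)^2/m(r)$ with $\ell,m$ affine and $m>0$, so $i$ is the composition of the jointly convex perspective map $(a,b)\mapsto a^2/b$ on $\R\times(0,\infty)$ with an affine map. Proposition~\ref{prop : maximisation pb convex} then delivers a Radon sub-probability measure $\overline\mu$ on $\cE$ with $\int_\cE i(r)\,\overline\mu(dr)=I^\star$ and $\int_\cE e_x(r)\,\overline\mu(dr)=1$ for $dx$-almost every $x$.

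It then remains to close the chain. Since $\overline\mu$ satisfies the normalisation, $q^{(\overline\mu)}_x(dr)=e_x(r)\,\overline\mu(dr)$ is a genuine extremal $\alpha$-LDP randomisation, and \eqref{eq : Fisher ctn general_preli} gives $\cI_{\theta_0}(q^{(\overline\mu)}\circ\cP)=\int_\cE i(r)\,\overline\mu(dr)=I^\star$. In view of the upper bound of the first paragraph, it suffices to show $\cI_{\theta_0}(q^{(\overline\mu)}\circ\cP)\le\cJ^{\text{max},\alpha}_{\theta_0}$, which then forces the equalities $I^\star=\cI_{\theta_0}(q^{(\overline\mu)}\circ\cP)\le\cJ^{\text{max},\alpha}_{\theta_0}\le I^\star$. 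I would obtain this by approximating $q^{(\overline\mu)}$ from below by admissible mechanisms: as $\theconvex$ is compact and separable for $d_\star$ (Lemmas~\ref{l: B closed} and~\ref{l: B separable}) and $\cE\subset\theconvex$, pick a refining sequence of finite Borel partitions of $\cE$ generating $\cB(\cE)$, let $T_n:\cE\to\{1,\dots,k_n\}$ be the associated quantisers, and set $q_n:=T_n\circ q^{(\overline\mu)}\in\cQ_\alpha$ (post-composition with a deterministic map preserves the $\alpha$-LDP constraint). Writing $Z$ for the $\cE$-valued output of $q^{(\overline\mu)}$ and $\cG_n:=\sigma(T_n(Z))\uparrow\sigma(Z)$, one has $\cI_{\theta_0}(q_n\circ\cP)=\E_{\theta_0}\big[\E[s_{\theta_0}(X)\mid\cG_n]^2\big]$, which converges to $\E_{\theta_0}[t_{\theta_0}(Z)^2]=\cI_{\theta_0}(q^{(\overline\mu)}\circ\cP)$ as $n\to\infty$ by Lévy's upward convergence theorem (licit since $\E_{\theta_0}[\norm{s_{\theta_0}(X)}^2]<\infty$), whence $\cJ^{\text{max},\alpha}_{\theta_0}\ge\sup_n\cI_{\theta_0}(q_n\circ\cP)=\cI_{\theta_0}(q^{(\overline\mu)}\circ\cP)$.

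I expect the main obstacle to be exactly this last step: the output space $\cE$ of $q^{(\overline\mu)}$ is in general uncountable, whereas $\cQ_\alpha$ is defined through finite or countable output spaces, so one cannot simply assert $q^{(\overline\mu)}\in\cQ_\alpha$. The quantisation argument repairs this but relies on the separability of $\cE$ in the weak-$\star$ topology (Lemma~\ref{l: B separable}) and on the square-integrability of the private score. If, as seems plausible from the linear-programming structure of \eqref{eq : optimisation pb ctn} and the finite case of Theorem~\ref{th : info cas fini}, the proof of Proposition~\ref{prop : maximisation pb convex} in fact produces a finitely supported $\overline\mu$, then $q^{(\overline\mu)}$ is directly admissible and this step becomes immediate. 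Everything else—verifying the hypotheses of Proposition~\ref{prop : maximisation pb convex} and the bookkeeping through \eqref{eq: fact on Fisher cont} and \eqref{eq : Fisher ctn general_preli}—is routine.
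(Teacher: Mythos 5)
Your proof is correct and follows the same high-level architecture as the paper: the upper bound $\cJ^{\text{max},\alpha}_{\theta_0}\le I^\star$ via Proposition~\ref{prop : factor ctn}, the data-processing inequality and the representation \eqref{eq : Fisher ctn general_preli}; existence of the optimal $\overline\mu$ via Proposition~\ref{prop : maximisation pb convex} (the paper packages the verification of its hypotheses into Lemma~\ref{lem : i ctn cvx}, establishing convexity via positive homogeneity plus subadditivity rather than your perspective-function argument, but the two are equivalent); and then the reverse inequality $\cI_{\theta_0}(q^{(\overline\mu)}\circ\cP)\le\cJ^{\text{max},\alpha}_{\theta_0}$ by discretising the uncountable output space of $q^{(\overline\mu)}$ — you correctly identified this as the one non-trivial point.

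Where you genuinely diverge from the paper is in that last discretisation step. The paper quantises the compact space $\theconvex$ into finitely many cells of $d_\star$-radius at most $\varepsilon$, defines $T^\varepsilon:\theconvex\to\{1,\dots,N\}$, and invokes the exact Fisher-information-loss identity \eqref{eq : loss Info quantif}: since $t_{\theta_0}$ and $\widetilde p_{\theta_0}$ are uniformly continuous on the compact $(\theconvex,d_\star)$ (Lemma~\ref{lem : expression tilde p et score ctn}(3)) and $\widetilde p_{\theta_0}\ge 1$, the quantity $\sup_r|t_{\theta_0}(r)-t^\varepsilon_{\theta_0}(T^\varepsilon(r))|$ can be made arbitrarily small, hence so can the loss. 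You instead take a refining sequence of finite partitions of $\cE$ generating $\cB(\cE)$, observe $\cI_{\theta_0}(q_n\circ\cP)=\E_{\theta_0}[\E[s_{\theta_0}(X)\mid\cG_n]^2]$ with $\cG_n\uparrow\sigma(Z)$, and conclude by Lévy's upward martingale convergence in $L^2$. Both arguments are sound. Yours is softer and requires only that the partitions refine and generate (no continuity of $t_{\theta_0}$ is invoked, only $s_{\theta_0}(X)\in L^2$); the paper's is more quantitative, giving an explicit $\varepsilon$-dependent bound for each quantiser and not requiring the sequence of quantisers to be nested. Your closing speculation that $\overline\mu$ might come out finitely supported is not borne out by the paper's Proposition~\ref{prop : maximisation pb convex}, whose Choquet/Zorn construction gives no cardinality control on the support — so the quantisation step really is needed, and you were right to provide it.
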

We have therefore established that one can always find an extremal privacy mechanism that reaches the optimal Fisher information for $\Theta \subset \mathbb{R}$. Moreover, we have explicitly characterized this information in the regime where the privacy parameter $\alpha$ tends to zero. Let us stress that Proposition \ref{prop : maximisation pb convex} is a general result which can be used to maximize other criteria than the Fisher information. 
%\arnaud{C'est un peu flou, car il y a pas d'autre criteria..Cela pourrait etre la trace de la matrice d'Info par ex.}

%The preceding analysis highlights the crucial role played by the structure of the set \(\mathcal{E}\) in shaping the behavior of the Fisher information. As in the discrete setting, the extremal values \(1\) and \(e^\alpha\) appear to be particularly significant. This naturally raises the question: could the analysis be simplified by considering measures on \(\mathcal{E}\) that are, in some sense, insensitive to the distinction between these two values? This question remains open and is left for future investigation. 

\section{Application to the uniform law}{\label{s: unif}}

In this section, we consider the situation of a private variable following a uniform law on the interval $[0,\theta]$ where $\theta>0$ is an unknown parameter. We have $\cX=[0,\infty)$ and $p_{\theta}(x)=\frac{1}{\theta} \indi{[0,\theta]}(x)$. The model is not regular, and it is well known that the rate of estimation of $\theta$ is proportional to the number of data $n$ instead of the usual $\sqrt{n}$ corresponding to finite Fisher information situations.
Consequently, the results of the Section \ref{Ss : Fisher cont} do not apply in this case. However, the factorization result holds true, and so we know that any $\alpha$-LDP mechanism can be factorized through a mechanism $q^{(\mu)}$ as described in Section \ref{ss: infinite}. 
The next proposition shows that for any such channel $q^{(\mu)}$ the model is DQM for almost all values of the parameter. We also give an upper bound on the Fisher information.

It shows that under $\alpha$-LDP constraint, the best rate of estimation for the range of a uniform variable is $\sqrt{n}$, with some upper bound on the Fisher information. The proof of this proposition also elucidate which $\alpha$-LDP mechanism is optimal in the regime $\alpha \to 0$. It leads to an estimation procedure that will be described in Section \ref{Ss : efficient est unif}.

\subsection{Upper bound on the Fisher information}

Let us consider $\mu$ any Radon sub-probability on  measure $\cE$ satisfying the normalization constraint \eqref{eq : cond norm mu continue}. We denote $q^{(\mu)}$ the extremal mechanism from $\cX=[0,\infty)$ to $\cE$ given by $q^{(\mu)}=e_x(r) \mu(dr)$. Let us denote by $\tilde{p}_\theta(dr)$ the law of the public data on $\cE$ when the private one follows the uniform law on $[0,\theta]$. We have
\begin{equation*}
	\tilde{p}_\theta(dr)=\left(\int_\cX r(x)p_\theta(x)dx\right) \mu(dr)=
	\left(\frac{1}{\theta}\int_0^\theta r(x)dx \right)\mu(dr).
\end{equation*}
\begin{prop}\label{prop : Fisher unif}
	 For almost every $\theta_0 \in (0,\infty)$ the model
	$\tilde{p}_\theta(dr)$ is DQM at $\theta_0$. The following upper bound on the Fisher information holds
	\begin{equation*}
		\cI_{\theta_0}\left((\widetilde{p}_\theta)_\theta\right) \le \frac{(e^\alpha-1)^2}{\theta_0^2}.
	\end{equation*}
\end{prop}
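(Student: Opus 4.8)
The plan is to regard the privatized family as a one–parameter dominated model on $\cE$ with $\mu$-density $g_\theta(r):=\widetilde p_\theta(r)=\tfrac1\theta\int_0^\theta r(x)\,dx=\tfrac1\theta G(r,\theta)$, where $G(r,\theta):=\int_0^\theta r(x)\,dx$, and to establish differentiability in quadratic mean (in the sense of Definition~\ref{def: DQM}) at almost every $\theta_0$ by a direct $L^2(\mu)$-differentiation argument, from which the Fisher information bound falls out by a pointwise estimate. First I would record the elementary two-sided bound: since $1\le r(x)\le e^\alpha$ for a.e.\ $x$, one has $\theta\le G(r,\theta)\le e^\alpha\theta$ and hence $1\le g_\theta(r)\le e^\alpha$, so $\sqrt{g_\theta(r)}\ge 1>0$ and no degeneracy occurs. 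For each fixed $r$ the map $\theta\mapsto G(r,\theta)$ is absolutely continuous with $\partial_\theta G(r,\theta)=r(\theta)$ for a.e.\ $\theta$ (Lebesgue differentiation), so on a compact interval $[\theta_0/2,2\theta_0]$ the map $\phi(r,\cdot):=\sqrt{g_\cdot(r)}$ is Lipschitz with a constant $L=L(\alpha,\theta_0)$ \emph{independent of $r$}; it is in particular differentiable a.e., with derivative $\dot\phi(r,\theta)=\tfrac12\big(\tfrac{r(\theta)}{G(r,\theta)}-\tfrac1\theta\big)\sqrt{g_\theta(r)}$ wherever $\partial_\theta G(r,\theta)=r(\theta)$.

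Next I would introduce the candidate score $t_{\theta_0}(r):=\tfrac{e_{\theta_0}(r)}{G(r,\theta_0)}-\tfrac1{\theta_0}$, written through the evaluation operator $e$ of Subsection~\ref{ss:point eval} so that it is a genuine $\mathcal B(\cE)$-measurable function of $r$, and put $D_{\theta_0}:=\tfrac12\,t_{\theta_0}\sqrt{g_{\theta_0}}$; since $\big|e_{\theta_0}(r)\theta_0-G(r,\theta_0)\big|\le(e^\alpha-1)\theta_0$ and $g_{\theta_0}\le e^\alpha$, the function $D_{\theta_0}$ is bounded uniformly in $r$, hence lies in $L^2(\mu)$. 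The core step is a quantifier exchange: for each $r$ and a.e.\ $\theta_0$, $\phi(r,\cdot)$ is differentiable at $\theta_0$ and, by the chain rule together with $\partial_\theta G(r,\theta)=r(\theta)$ a.e.\ and the identity $e_x(r)=r(x)$ for a.e.\ $x$ of Proposition~\ref{p: evaluation} (used to replace $r(\theta_0)$ by $e_{\theta_0}(r)$), its derivative equals $D_{\theta_0}(r)$; hence $\tfrac1h\big(\phi(r,\theta_0+h)-\phi(r,\theta_0)\big)\to D_{\theta_0}(r)$ for a.e.\ $\theta_0$. Applying Fubini to the jointly measurable ``bad set'' $\big\{(\theta_0,r):\ \tfrac1h(\phi(r,\theta_0+h)-\phi(r,\theta_0))\not\to D_{\theta_0}(r)\big\}$ converts ``for $\mu$-a.e.\ $r$, a.e.\ $\theta_0$'' into ``for a.e.\ $\theta_0$, $\mu$-a.e.\ $r$''. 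Combined with the uniform bound $\big|\tfrac1h(\phi(r,\theta_0+h)-\phi(r,\theta_0))\big|\le L$ and the finiteness of $\mu$, dominated convergence upgrades pointwise convergence to $L^2(\mu)$-convergence, which is precisely $\int_\cE\big(\sqrt{g_{\theta_0+h}(r)}-\sqrt{g_{\theta_0}(r)}-hD_{\theta_0}(r)\big)^2\mu(dr)=o(h^2)$, i.e.\ DQM at $\theta_0$ with score $t_{\theta_0}$. That $\E_{\theta_0}[t_{\theta_0}(Z)]=0$ is automatic from DQM, and also follows directly by Fubini: $\int_\cE e_{\theta_0}(r)\,\mu(dr)=1$ for a.e.\ $\theta_0$ by~\eqref{eq : cond norm mu continue}, while $\int_\cE G(r,\theta_0)\,\mu(dr)=\int_0^{\theta_0}\!\int_\cE e_x(r)\,\mu(dr)\,dx=\theta_0$.

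For the Fisher information I would then compute, for such $\theta_0$,
\[
\cI_{\theta_0}\big((\widetilde p_\theta)_\theta\big)=\int_\cE t_{\theta_0}(r)^2\,g_{\theta_0}(r)\,\mu(dr)=\frac{1}{\theta_0^{3}}\int_\cE\frac{\big(e_{\theta_0}(r)\,\theta_0-G(r,\theta_0)\big)^2}{G(r,\theta_0)}\,\mu(dr),
\]
and bound the integrand pointwise: both $e_{\theta_0}(r)\theta_0$ and $G(r,\theta_0)$ lie in $[\theta_0,e^\alpha\theta_0]$ ($\mu$-a.e.\ $r$, for a.e.\ $\theta_0$), so their difference is at most $(e^\alpha-1)\theta_0$ in absolute value, while $G(r,\theta_0)\ge\theta_0$; hence the integrand is $\le(e^\alpha-1)^2\theta_0$, and since $\mu(\cE)\le 1$ by~\eqref{eq: bound mu E} we obtain $\cI_{\theta_0}((\widetilde p_\theta)_\theta)\le(e^\alpha-1)^2/\theta_0^{2}$.

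The main obstacle is the careful bookkeeping of null sets and joint measurability: each $r$ carries its own Lebesgue-null exceptional set in $\theta$ (both for $\partial_\theta G(r,\cdot)=r(\cdot)$ and for $e_x(r)=r(x)$), so the whole argument rests on exchanging these ``for $\mu$-a.e.\ $r$, a.e.\ $\theta$'' statements with their transposes via Fubini. This requires checking that $(\theta,r)\mapsto\phi(r,\theta)$ and the associated bad set are $\mathcal B((0,\infty))\otimes\mathcal B(\cE)$-measurable, and, crucially, that the evaluation operator $e$ may legitimately stand in for pointwise evaluation inside $\mu$-integrals — exactly the content that Proposition~\ref{p: evaluation} was designed to supply.
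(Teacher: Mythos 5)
Your proof is correct, and the Fisher-information computation and pointwise bound are essentially identical to the paper's: you both express $\cI_{\theta_0}$ as a $\mu$-integral of $t_{\theta_0}^2\,g_{\theta_0}$, bound the numerator by $(e^\alpha-1)^2\theta_0^2$, bound the denominator below by $\theta_0$, and invoke $\mu(\cE)\le 1$. Your score $t_{\theta_0}(r)=\tfrac{e_{\theta_0}(r)}{G(r,\theta_0)}-\tfrac1{\theta_0}$ coincides with the paper's (written there through $e^+_{\theta_0}(\indi{F^+_r})$ via the identity $r=1+(e^\alpha-1)\indi{F^+_r}$; the choices agree $\mu\otimes d\theta$-a.e.). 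The Fubini swap of ``for every $r$, a.e.\ $\theta$'' into ``for a.e.\ $\theta$, $\mu$-a.e.\ $r$'' is also the paper's device, using the joint measurability supplied by Proposition~\ref{p: evaluation}.

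Where you genuinely go beyond the paper is in the DQM verification itself. The paper's proof establishes only that the $\mu$-density $\theta\mapsto\widetilde p_\theta(r)$ is differentiable at $\theta_0$ for $\mu$-a.e.\ $r$ and then computes the Fisher information, leaving the $o(\lVert h\rVert^2)$ estimate in Definition~\ref{def: DQM} implicit. You close this gap cleanly: since $1\le g_\theta(r)\le e^\alpha$ and $G(r,\cdot)$ is $e^\alpha$-Lipschitz, the map $\theta\mapsto\sqrt{g_\theta(r)}$ is Lipschitz on $[\theta_0/2,2\theta_0]$ with a constant $L(\alpha,\theta_0)$ independent of $r$, so the difference quotients are uniformly bounded; dominated convergence (using the finiteness of $\mu$) then upgrades the a.e.\ pointwise limit to $L^2(\mu)$-convergence, which is exactly the DQM condition with score $t_{\theta_0}$. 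This is a more self-contained argument and, in my view, a small improvement in rigour over the published proof. One unnecessary but harmless remark: in your final bound you only need $e_{\theta_0}(r)\in[1,e^\alpha]$, which holds for \emph{every} $r\in\theconvex$ and every $\theta_0$ directly from the $\limsup$-of-averages definition of $e$, without invoking the $\{1,e^\alpha\}$-valuedness of $r\in\cE$.
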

\begin{proof}
	The density of the model is given by 
	\begin{equation}
		\label{eq : tilde p unif}
		\tilde{p}_\theta(r) =\frac{\tilde{p}_\theta(dr)}{\mu(dr)}=\frac{1}{\theta} \int_0^\theta r(x)dx.
	\end{equation}
		 We have to prove that this density admits a derivative with respect to the parameter $\theta$. Define 
	\begin{equation*}
		\cR=\left\{(r,x)\in B \times [0,\infty) \mid e_x^+(r):=\lim_{h\to0} h^{-1}\int_x^{x+h} r(y) dy \text{ exists} \right\}.
	\end{equation*}
	By similar argument as for the proof of the measurability of the operator $e_x$ defined by \eqref{eq : def evaluation operator}, we can check that set $\cR$ is measurable with respect to $\cB(B)\otimes\cB([0,\infty))$ (see Step 4 in the proof of Proposition \ref{p: evaluation}). By Fubini theorem, we deduce
	$\int_{\cE \times [0,\infty)} \indi{\cR^c}(r,\theta) \mu(dr)\times d\theta=
\int_{\cE} \left(\int_0^\infty \indi{\cR^c}(r,\theta) d\theta \right)\mu(dr)=0$, as for any $r\in \cE$, $\lim_{h\to0} h^{-1}\int_\theta^{\theta+h} r(y) dy$ exists for almost every $\theta$ and is equal to $r(\theta)$. Now, a further application of Fubini theorem entails  
$ \int_0^\infty\left(\int_{\cE} \indi{\cR^c}(r,\theta) \mu(dr) \right)d\theta=0$.
 Consequently, there exists $\overline{\Theta}$ a set of real, with full Lebesgue
  measure, such that $\int_{\cE} \indi{\cR^c}(r,\theta) \mu(dr)=0$ for all $\theta \in \overline{\Theta}$.

Now, we consider any $\theta_0 \in \overline{\Theta}$. From the definition of $\cR$, we have that $\mu(dr)$ a.e., the function $\theta \mapsto \int_0^\theta r(x)dx$ admits a derivative at $\theta=\theta_0$, denoted as $e^+_{\theta_0}(r)$. Recalling \eqref{eq : tilde p unif}, we deduce that $\tilde{p}_\theta(r)$ admits a derivative at $\theta=\theta_0$ given by
\begin{align} \nonumber
	\frac{\partial}{\partial \theta} \tilde{p}_{\theta_0}(r)
	&= -\frac{1}{\theta_0^2}\int_0^{\theta_0} r(x)dx+\frac{e^+_{\theta_0}(r)}{\theta_0} 
	\\ \label{eq : p tilde dot unif}
	&= \frac{1}{\theta_0^2}\int_0^{\theta_0} (e^+_{\theta_0}(r)-r(x))dx.
\end{align}
We recall the definition of $F^+_r = r^{-1} \left( \{ e^\alpha \} \right),$
and that \( r(x) = 1 + (e^{\alpha} - 1) \, \indi{F^+_r}(x) \).  
Substituting this expression into the definition of \( e^+_{\theta_0}(r) \), we obtain  
\[
e^+_{\theta_0}(r) = 1 + (e^{\alpha} - 1) \, e^+_{\theta_0} \left( \indi{F^+_r} \right).
\]
It implies
$e^+_{\theta_0}(r)-r(x)=(e^{\alpha}-1)(e^+_{\theta_0}(\indi{F^+_r})-\indi{F^+_r}(x))$.
%From \eqref{eq : tilde p unif}--
Inserting in \eqref{eq : p tilde dot unif}, we deduce that
\begin{equation*}
	\frac{(\frac{\partial}{\partial \theta} \tilde{p}_{\theta_0}(r))^2}{\tilde{p}_{\theta_0}(r)}
	=  \frac{(e^\alpha-1)^2}{\theta_0^4}\frac{\left( \int_0^{\theta_0} 
	(e^+_{\theta_0}(\indi{F^+_r})-\indi{F^+_r}(x))dx    \right)^2}{\widetilde{p}_{\theta_0}(r)}.
\end{equation*}
Collecting with  \eqref{eq : tilde p unif}, we deduce the expression for the Fisher information
\begin{equation*}
	\cI_{\theta_0}\left((\widetilde{p}_\theta)_\theta\right) 
	=	\frac{(e^\alpha-1)^2}{\theta_0^3}
	\int_{\cE}
\frac{\left( \int_0^{\theta_0} 
		(e^+_{\theta_0}(\indi{F^+_r})-\indi{F^+_r}(x))dx    \right)^2}{ \int_0^{\theta_0} r(x)dx } \mu(dr).
\end{equation*}	
	Since $e^+_{\theta_0}(\indi{F^+_r}) \in [0,1]$ and $\indi{F^+_r}(x)\in \{0,1\}$, we deduce that 
	\begin{equation} \label{eq : F_max unif}
	\abs*{ \int_0^{\theta_0} 
	(e^+_{\theta_0}(\indi{F^+_r})-\indi{F^+_r}(x))dx} \le \theta_0. 
	\end{equation}
	From $r\ge1$, we have $\int_0^{\theta_0} r(x)dx \ge \theta_0$, which gives
\begin{equation*}
	\cI_{\theta_0}\left((\widetilde{p}_\theta)_\theta\right) 
	\le	\frac{(e^\alpha-1)^2}{\theta_0^3}
	\int_{\cE} \frac{\theta_0^2}{\theta_0} \mu(dr) \le \frac{(e^\alpha-1)^2}{\theta_0^2},
\end{equation*}	
	where we used that $\mu$ is a sub-probability.	
\end{proof}
\begin{rem}
The upper bound given in Proposition \ref{prop : Fisher unif} is very similar
to the upper bound in Theorem \ref{thm : equivalent alpha petit ctn} for regular models. In particular, this upper bound is obtained by searching for $F_r^{+}$ achieving the maximum of the LHS 
in \eqref{eq : F_max unif}. We can see that \eqref{eq : F_max unif}
becomes an equality when $F_r^+=[0,\theta_0)$ and  $F_r^+=[\theta_0,\infty)$. It suggests that the measure $\mu$, supported on the two elements $r_1=1+(e^{\alpha}-1)\indi{[0,\theta_0)}$ and $r_2=1+(e^{\alpha}-1)\indi{[\theta_0,\infty)}$, makes the Fisher information close to the optimal one in the regime $\alpha \to 0$. %This mechanism is the analogue, in this setting, of the mechanism introduced in Thoerem \ref{thm : equivalent alpha petit ctn} for the regular case.
\end{rem}

\subsection{Efficient estimator in the regime $\alpha \to 0$} \label{Ss : efficient est unif}	
	Assume that $X_1,\dots,X_n$ is a sampling of random variables with uniform distribution on $[0,\theta]$, and $\theta>0$. We denote by $\theta_0$ the true value of the parameter.

 From the previous section, we expect that a Bernoulli randomization mechanism is efficient when $\alpha \to 0$. However, the construction of the Bernoulli mechanism necessitates the knowledge of $\theta_0$.
 
 Let us assume that we have a preliminary estimation of the parameter, which we call $\widehat{\theta}^{\text{p}}>0$. For simplicity of the exposition, we consider that 
 $\widehat{\theta}^{\text{p}}$ is a deterministic value. It can be a preliminary estimator independent from $X_1,\dots,X_n$, and our analysis is then conditional to $\widehat{\theta}^{\text{p}}$. Remark that we do not need $\widehat{\theta}^{\text{p}}$ to be close to $\theta_0$.
 
 We introduce the $\alpha$-LDP extremal mechanism corresponding to a two point supported measure $\mu$ on $\cE$, which we write in a more amenable way as a randomization valued on $\cZ=\{0,1\}$, by
 \begin{equation*}
 	P(Z=z \mid X=x)=q_x(z)=
 	\begin{cases}
 		\frac{1}{1+e^\alpha}\left(e^\alpha \indi{\{1\}}(z)+\indi{\{0\}}(z) \right),
 		\text{if $x \in F^+:=[0,\widehat{\theta}^{\text{p}})$}
 		\\
 		\frac{1}{1+e^\alpha}\left( \indi{\{1\}}(z)+e^\alpha \indi{\{0\}}(z) \right),
 		\text{if $x \in (F^+)^c=[\widehat{\theta}^{\text{p}} ,\infty)$}.
 	\end{cases}
 \end{equation*}
We draw the public views $Z_1,\dots,Z_n$ according to this channel. 
Then, we have 
\begin{align} \nonumber
	\widetilde{p}_{\theta}(1)=P_\theta(Z=1)&=\frac{1}{\theta}\int_0^{\widehat{\theta}^{\text{p}} \wedge \theta} \frac{e^\alpha}{1+e^\alpha} dx +
	\frac{1}{\theta}
	\int_{\widehat{\theta}^{\text{p}} \wedge \theta}^{\theta} \frac{1}{1+e^\alpha} dx 
	\\&= \label{eq : tilde p unif estimateur}
	\frac{1}{1+e^\alpha} + \frac{e^\alpha-1}{1+e^\alpha} \left(\frac{\widehat{\theta}^{\text{p}}}{\theta} \wedge 1 \right).
 %	\in [\frac{1}{1+e^\alpha},\frac{e^\alpha}{1+e^\alpha}].  
\end{align}
 Estimating $\widetilde{p}_{\theta}(1)$ by $\overline{Z}_n:=\frac{1}{n}\sum_{i=1}^n Z_i$ gives the following estimator for $\theta$
 \begin{equation*}
 	\widehat{\theta}_n:= \widehat{\theta}^{\text{p}} \frac{e^\alpha -1}{(1+e^\alpha) \overline{Z}_n -1}.
 \end{equation*}
 \begin{prop}
 	1) We have $\widehat{\theta}_n \xrightarrow[a.s.]{n\to\infty} \theta_0 \vee \widehat{\theta}^{\text{p}}$.
 	
 	2) If $\widehat{\theta}^{\text{p}} \le \theta_0$, the estimator is consistent and
 	\begin{equation*}
 		\sqrt{n} \left(\widehat{\theta}_n-\theta_0\right)
 		\xrightarrow[\cL]{n \to \infty} \mathcal{N}\left(0,v(\theta_0,\widehat{\theta}^{\text{p}})\right),
 	\end{equation*}
 	where 
 	\begin{equation*}
 		v(\theta_0,\widehat{\theta}^{\text{p}})=
 		\frac{\theta_0^4}{(\widehat{\theta}^{\text{p}})^2} \frac{1}{(e^\alpha-1)^2}
 		\left[1+(e^\alpha-1) \frac{\widehat{\theta}^{\text{p}}}{\theta_0}\right]
 		\left[e^\alpha-(e^\alpha-1) \frac{\widehat{\theta}^{\text{p}}}{\theta_0}\right].
 	\end{equation*}
 \end{prop}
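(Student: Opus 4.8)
The plan is to write the estimator as $\widehat{\theta}_n = g(\overline{Z}_n)$ with $g(t):=\widehat{\theta}^{\text{p}}\,\tfrac{e^\alpha-1}{(1+e^\alpha)t-1}$, where $Z_1,\dots,Z_n$ are i.i.d.\ Bernoulli$(p_0)$ and, by \eqref{eq : tilde p unif estimateur},
\[
p_0:=\widetilde{p}_{\theta_0}(1)=\frac{1}{1+e^\alpha}+\frac{e^\alpha-1}{1+e^\alpha}\Bigl(\tfrac{\widehat{\theta}^{\text{p}}}{\theta_0}\wedge 1\Bigr),
\]
and then to feed $\overline{Z}_n$ into, respectively, the strong law of large numbers together with the continuous mapping theorem (for 1) and the central limit theorem together with the delta method (for 2). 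The key preliminary computation, used in both parts, is $(1+e^\alpha)p_0-1=(e^\alpha-1)\bigl(\tfrac{\widehat{\theta}^{\text{p}}}{\theta_0}\wedge1\bigr)$, which is strictly positive because $\alpha>0$ and $\widehat{\theta}^{\text{p}}>0$; in particular $g$ is continuous (indeed $\mathcal{C}^\infty$) on a neighborhood of $p_0$.

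For 1), the strong law gives $\overline{Z}_n\to p_0$ a.s., hence $\widehat{\theta}_n=g(\overline{Z}_n)\to g(p_0)=\widehat{\theta}^{\text{p}}\big/\bigl(\tfrac{\widehat{\theta}^{\text{p}}}{\theta_0}\wedge1\bigr)$ a.s. It then remains to observe the elementary identity $\widehat{\theta}^{\text{p}}\big/\bigl(\tfrac{\widehat{\theta}^{\text{p}}}{\theta_0}\wedge1\bigr)=\widehat{\theta}^{\text{p}}\theta_0/(\widehat{\theta}^{\text{p}}\wedge\theta_0)=\theta_0\vee\widehat{\theta}^{\text{p}}$, which one checks by splitting into the cases $\widehat{\theta}^{\text{p}}\le\theta_0$ and $\widehat{\theta}^{\text{p}}>\theta_0$.

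For 2), assume $\widehat{\theta}^{\text{p}}\le\theta_0$ and write $a:=\widehat{\theta}^{\text{p}}/\theta_0\in(0,1]$, so that $p_0=\tfrac{1+(e^\alpha-1)a}{1+e^\alpha}\in(0,1)$ and $1-p_0=\tfrac{e^\alpha-(e^\alpha-1)a}{1+e^\alpha}$; consistency ($\widehat{\theta}_n\to\theta_0$) is the $\widehat{\theta}^{\text{p}}\le\theta_0$ instance of 1). The central limit theorem gives $\sqrt{n}(\overline{Z}_n-p_0)\xrightarrow[\mathcal{L}]{n\to\infty}\mathcal{N}\bigl(0,p_0(1-p_0)\bigr)$, and since $g'(p_0)=-\widehat{\theta}^{\text{p}}(e^\alpha-1)(1+e^\alpha)\big/\bigl((1+e^\alpha)p_0-1\bigr)^2=-\tfrac{(1+e^\alpha)\theta_0^2}{(e^\alpha-1)\widehat{\theta}^{\text{p}}}$ (using $(1+e^\alpha)p_0-1=(e^\alpha-1)a$), the delta method yields $\sqrt{n}(\widehat{\theta}_n-\theta_0)\xrightarrow[\mathcal{L}]{n\to\infty}\mathcal{N}\bigl(0,g'(p_0)^2\,p_0(1-p_0)\bigr)$. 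The last step is the algebraic simplification
\[
g'(p_0)^2\,p_0(1-p_0)=\frac{(1+e^\alpha)^2\theta_0^4}{(e^\alpha-1)^2(\widehat{\theta}^{\text{p}})^2}\cdot\frac{[1+(e^\alpha-1)a][e^\alpha-(e^\alpha-1)a]}{(1+e^\alpha)^2}=v(\theta_0,\widehat{\theta}^{\text{p}}),
\]
obtained by substituting back $a=\widehat{\theta}^{\text{p}}/\theta_0$.

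There is no serious obstacle: the argument is a standard SLLN/CLT plus continuous mapping/delta method. The only points that genuinely require care are (i) checking that the denominator $(1+e^\alpha)t-1$ stays bounded away from $0$ near $t=p_0$, which is exactly where the hypotheses $\alpha>0$ and $\widehat{\theta}^{\text{p}}>0$ enter and guarantee that $g$ is smooth there, and (ii) handling the $\min$/$\max$ correctly when identifying the a.s.\ limit in 1) and the regime $\widehat{\theta}^{\text{p}}\le\theta_0$ in 2). The final variance identity is a routine, if slightly tedious, bookkeeping computation.
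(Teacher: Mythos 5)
Your proof is correct and follows essentially the same route as the paper: both identify the a.s.\ limit of $\overline{Z}_n$ via the LLN and the formula for $\widetilde{p}_{\theta_0}(1)$ to get part 1), and both apply the CLT to $\overline{Z}_n$ and transfer the asymptotic normality to $\widehat{\theta}_n$ for part 2). The only (cosmetic) difference is that you package the transfer via the delta method applied to $g(t)=\widehat{\theta}^{\text{p}}(e^\alpha-1)/((1+e^\alpha)t-1)$, while the paper rewrites $\widehat{\theta}_n-\theta_0$ through an exact algebraic identity (no Taylor remainder) and then invokes Slutsky---both yield the same variance $v(\theta_0,\widehat{\theta}^{\text{p}})$.
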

 \begin{proof} 1) This is obtained by the Law of Large Numbers, $\overline{Z}_n
 	\xrightarrow[a.s.]{n\to\infty}  \widetilde{p}_{\theta_0}(1)$ together with the expression of $\widehat{\theta}_n$ and \eqref{eq : tilde p unif estimateur}.
 	
 2) With a few lines of algebra, we obtain:
\[
\sqrt{n}\left(\widehat{\theta}_n - \theta_0\right) = - \frac{\theta_0 (1 + e^\alpha)}{(1 + e^\alpha)\overline{Z}_n - 1} \sqrt{n}\left(\overline{Z}_n - \widetilde{p}_{\theta_0}(1)\right).
\]
Indeed, we start from the expression:
\[
\widehat{\theta}_n - \theta_0 = - \frac{\hat{\theta}^p (e^\alpha - 1)(1 + e^\alpha)}{\left((1 + e^\alpha)\overline{Z}_n - 1\right)\left((1 + e^\alpha)\widetilde{p}_{\theta_0}(1) - 1\right)} \left( \overline{Z}_n - \widetilde{p}_{\theta_0}(1) \right).
\]
Recalling that we are in the case where \( \hat{\theta}^p \leq \theta_0 \), which implies
\[
(1 + e^\alpha)\widetilde{p}_{\theta_0}(1) - 1 = (e^\alpha - 1) \frac{\hat{\theta}^p}{\theta_0},
\]
we directly obtain the desired result. Applying the Central Limit Theorem, we have:
\[
\sqrt{n} \left( \overline{Z}_n - \widetilde{p}_{\theta_0}(1) \right) \xrightarrow[\mathcal{L}]{n \to \infty} \mathcal{N} \left( 0, \widetilde{p}_{\theta_0}(1)\left(1 - \widetilde{p}_{\theta_0}(1)\right) \right).
\]
After some straightforward computations and substituting \( \widetilde{p}_{\theta_0}(1) \) as in Equation~\eqref{eq : tilde p unif estimateur}, this leads to the desired result regarding the asymptotic distribution of \( \widehat{\theta}_n \).
 \end{proof}
\begin{rem} 	
The estimation procedure is consistent as long as the preliminary estimate $\widehat{\theta}^{\text{p}}$ is smaller than the true value of the parameter. The variance of the estimator is reduced if $\widehat{\theta}^{\text{p}}$ is close to the true value.  

When $\alpha \to 0$, we have
\[
v\bigl(\theta_0,\widehat{\theta}^{\text{p}}\bigr) \sim 
\frac{\theta_0^4}{\alpha^2 \bigl(\widehat{\theta}^{\text{p}}\bigr)^2}.
\]
In particular, if $\widehat{\theta}^{\text{p}} = \theta_0$, the variance of the estimator is, for $\alpha \to 0$, equivalent to 
\[
\frac{\theta_0^2}{\alpha^2},
\]
which is exactly the best achievable variance according to Proposition~\ref{prop : Fisher unif}. In practice, it is impossible to choose $\widehat{\theta}^{\text{p}} = \theta_0$, but it is possible to replace it with some preliminary consistent estimator based on a small subset of the data.
\end{rem}

To see how well the estimation procedure works in practice, we perform a Monte Carlo experiment for the estimation of $\theta_0=1$, with a sample size of $n=10^3$. The value of $\widehat{\theta}^{\text{p}}$ ranges from $0.5* \theta_0$ to $1.3*\theta_0$. 
In Figures \ref{fig : mean}--\ref{fig : std}, we draw the value of the empirical mean and the empirical standard deviation as a function of $\widehat{\theta}^{\text{p}}$. The dashed line in
Figure \ref{fig : std} is the lower bound of the standard deviation as given by the assessment on the Fisher information of Proposition \ref{prop : Fisher unif}. The privacy level is $\alpha=0.3$ and the number of Monte Carlo iterations is $10^6$.

If the value of $\widehat{\theta}^{\text{p}}$ is too small, the standard deviation of the estimator is rather high (see Figure \ref{fig : std}). This standard deviation decreases when $\widehat{\theta}^{\text{p}}$ increases to the true value of the parameter. 
 
In practice, we see that if $\widehat{\theta}^{\text{p}} \in [0.7*\theta_0, \theta_0]$ the estimator works well and is close to the efficiency.
% We see that the estimator has small bias and standard deviation and is, in practice, very close to the efficiency if $\widehat{\theta}^{\text{guess}}$ is close to and smaller than $\theta_0$.
 If $\widehat{\theta}^{\text{p}} > \theta_0$, the estimator is completely biased as shown in Figure \ref{fig : mean}.

\begin{figure}
	\centering
	\begin{minipage}{0.50\textwidth}
		\centering
		\includegraphics[width=0.9\textwidth]{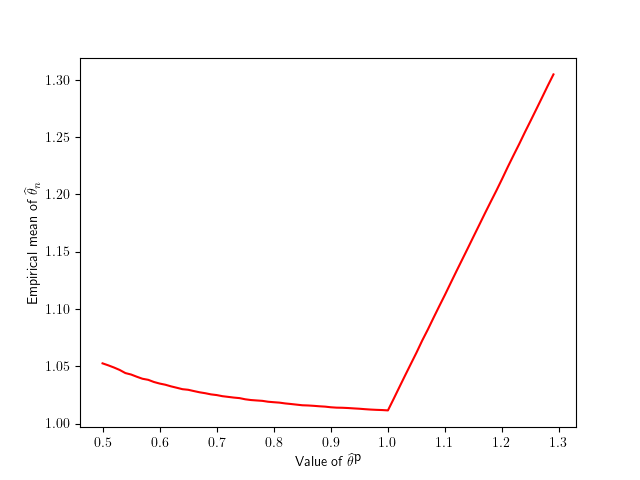} % first figure itself
		\caption{Empirical Mean \label{fig : mean}}
	\end{minipage}\hfill
	\begin{minipage}{0.50\textwidth}
		\centering
		\includegraphics[width=0.9\textwidth]{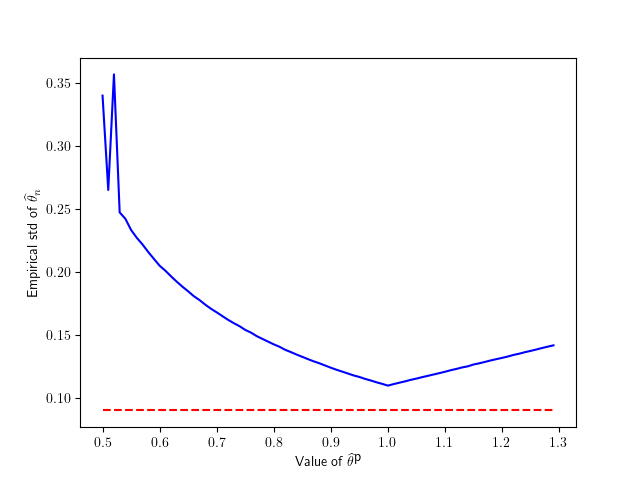} % second figure itself
		\caption{Empirical Std \label{fig : std}}
	\end{minipage}
\end{figure}

\section{Functional analysis tools about Choquet's theorem}{\label{S: tools}}
To establish our main results in a continuous space \({X}\), our key tool is Choquet’s theorem, as stated in Theorem \ref{th: Choquet}. In order to apply it within our framework, a solid understanding of certain concepts in functional analysis is required. The purpose of this section is therefore to introduce the necessary background, with a particular focus on the weak-\(\star\) topology.

\subsection{Weak -$\star$ topology}
Let \({X}\) be a Banach space, and let \({X}'\) denote its topological dual—that is, the space of all continuous linear functionals on \({X}\). This space is itself a Banach space, equipped with the norm
\[
| | | f | | | := \sup_{x \neq 0} \frac{\langle f, x \rangle}{\| x \|},
\]
where \(\langle f, x \rangle\) denotes the dual pairing, meaning the image of \(x\) under the linear functional \(f\).

The weak topology on \({X}\) is defined as the coarsest topology that makes all functionals \(f \in {X}'\) continuous. In finite-dimensional settings, the weak and strong topologies coincide, but this is generally not the case in infinite dimensions. The weak topology becomes particularly useful when the dual space \({X}'\) is well understood—for example, in the case of \(L^p(\cX)\) spaces, where \(\cX\) is an open subset of \(\mathbb{R}^d\) equipped with the Lebesgue measure and \(p \in [1, \infty)\).

In this setting, the topological dual of \(L^p(\cX)\) can be identified with \(L^q(\cX)\), where \(q\) is the conjugate exponent of \(p\), i.e., \(\frac{1}{p} + \frac{1}{q} = 1\). Thanks to the Riesz representation theorem (see \cite{Bre83}, p.61), for any \(f \in (L^p(\cX))'\), there exists a unique \(v \in L^q(\cX)\) such that
\[
\langle f, u \rangle = \int_\cX v u, \quad \text{for all } u \in L^p(\cX).
\]

Given a Banach space \({X}\) and its topological dual \({X}'\), one can similarly define a weak topology on \({X}'\). However, we are now interested in introducing a topology on \({X}'\) that is even finer—strictly so when \({X}\) is not reflexive.

\begin{df}{\label{def: weakstar top}}
The weak-\(\star\) topology on \({X}'\) is the coarsest topology that makes all the maps  
\begin{align*}
J_x : \, \, & {X}' \rightarrow \R \\
& f \mapsto \langle f, x \rangle
\end{align*} 
continuous for every \(x \in {X}\).  
\end{df}

As a direct consequence of its definition, the weak-$\star$ topology satisfies the Hausdorff condition. A sequence \((f_n) \subset {X}'\) is said to converge to \(f \in {X}'\) in the weak-$\star$ topology, denoted by
\[
f_n \xrightharpoonup{\star} f \quad \Leftrightarrow \quad \forall x \in {X}, \ \langle f_n, x \rangle \to \langle f, x \rangle.
\]
It is important to observe that strong convergence implies weak convergence, which in turn implies weak-$\star$ convergence. Moreover, by the Banach–Steinhaus theorem, any sequence that converges in the weak-$\star$ topology is necessarily bounded (see p.16 of \cite{Bre83}).

Weak-$\star$ closed sets are, by definition, the complements of weak-$\star$ open sets. However, it is worth noting that sets which are closed in the strong topology — even convex ones — are not necessarily closed in the weak-$\star$ topology. For instance, if \({X}\) is not reflexive and \(\xi \in {X}'' \setminus J({X})\), where $J$ is the canonic injection, then the hyperplane \(\xi^\perp\), which is closed in both the strong and weak topologies, is not weak-$\star$ closed (see p.42 of \cite{Bre83}).

Nevertheless, closed balls in \({X}'\) are weak-$\star$ closed, and even compact in the weak-$\star$ topology (see pp.42–43 of \cite{Bre83}). This property will play a key role in our subsequent analysis. 

\begin{thm}[Banach-Alaoglu]{\label{th: BanAla}}
 The closed unit ball of the dual space of a normed vector space is compact in the weak-$\star$ topology. 
\end{thm}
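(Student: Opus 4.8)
The plan is to realize the closed unit ball $B' := \{ f \in {X}' : |||f||| \le 1 \}$ of the dual as a closed subset of a compact product space, and then invoke Tychonoff's theorem. For each $x \in {X}$, let $D_x := [-\|x\|,\|x\|] \subset \R$, which is compact, and set $P := \prod_{x \in {X}} D_x$ equipped with the product topology; by Tychonoff's theorem, $P$ is compact. Define the evaluation map
\[
	\Phi : B' \longrightarrow P, \qquad \Phi(f) := \big( \langle f, x \rangle \big)_{x \in {X}} .
\]
This map is well defined, since $|\langle f, x\rangle| \le |||f||| \, \|x\| \le \|x\|$ for $f \in B'$, and it is injective, since a linear functional is determined by its values on ${X}$.

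Next I would check that $\Phi$ is a homeomorphism of $B'$, carrying the subspace weak-$\star$ topology, onto $\Phi(B')$, carrying the subspace topology inherited from $P$. This is immediate from the definitions: the weak-$\star$ topology on $B'$ is the coarsest one making every map $f \mapsto \langle f,x\rangle$ continuous, the subspace topology on $\Phi(B')$ is the coarsest one making every coordinate projection $(c_y)_y \mapsto c_x$ continuous, and these two families of maps correspond under $\Phi$. Concretely, $\Phi$ carries the subbasic open set $\{ f \in B' : \langle f,x\rangle \in U\}$ exactly onto $\{ (c_y)_y \in \Phi(B') : c_x \in U\}$, so it maps a subbasis for one topology onto a subbasis for the other.

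It then remains to show that $\Phi(B')$ is closed in $P$. A tuple $(c_x)_{x\in{X}} \in P$ belongs to $\Phi(B')$ if and only if the assignment $x \mapsto c_x$ is linear; boundedness of the corresponding functional, with norm at most $1$, is then automatic from $|c_x| \le \|x\|$. Hence
\[
	\Phi(B') = \bigcap_{x,y \in {X}} \big\{ (c_u)_u \in P : c_{x+y} - c_x - c_y = 0 \big\} \ \cap\ \bigcap_{\substack{\lambda \in \R \\ x \in {X}}} \big\{ (c_u)_u \in P : c_{\lambda x} - \lambda c_x = 0 \big\}.
\]
Each set in this intersection is the preimage of $\{0\}$ under a map of the form $(c_u)_u \mapsto c_{x+y} - c_x - c_y$, respectively $(c_u)_u \mapsto c_{\lambda x} - \lambda c_x$, and such a map is continuous on $P$ because every coordinate projection is continuous for the product topology. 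A preimage of a closed set under a continuous map is closed, and so is the intersection; thus $\Phi(B')$ is a closed subset of the compact space $P$, hence compact, and therefore $B'$, being homeomorphic to it, is compact in the weak-$\star$ topology.

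The only substantive ingredient is Tychonoff's theorem for an arbitrary, possibly uncountable, product of compact spaces, which is where the axiom of choice enters; everything else --- well-definedness of $\Phi$, the identification of the two topologies, and the closedness of the image --- is a routine unwinding of definitions once the product embedding is in place. The one subtlety worth flagging is that the homeomorphism step has to be carried out purely at the level of topological spaces, since no metric is available on a general dual ball, which is why it is phrased in terms of initial topologies and subbases rather than convergent sequences.
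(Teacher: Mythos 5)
Your proof is correct and is the classical Tychonoff-based proof of Banach--Alaoglu: embed $B'$ into the compact product $\prod_{x} [-\|x\|,\|x\|]$ via the evaluation map, observe that the weak-$\star$ subspace topology and the product subspace topology coincide as initial topologies for the same family of coordinate maps, and show the image is closed because linearity is a pointwise (hence closed) condition while the norm bound is automatic from membership in the product. The paper does not prove this theorem; it is stated as background and cited to Brezis (pp.\ 42--43 of \cite{Bre83}), where essentially this same argument appears, so there is nothing to compare against. One cosmetic remark: you write the homogeneity constraint with $\lambda \in \R$, which implicitly treats ${X}$ as a real space; for a complex normed space one would take $D_x$ to be the closed disk of radius $\|x\|$ in $\C$ and $\lambda \in \C$. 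Since the paper works over $\R$ throughout, this does not affect anything here.
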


Note that the classical notion of compactness, defined via the Borel–Lebesgue property, is equivalent to the Bolzano–Weierstrass property (i.e., every bounded sequence admits a convergent subsequence) only in metric spaces. Now, the dual space \({X}'\), equipped with the weak-$\star$ topology, is not metrizable if \({X}\) is infinite-dimensional. However, it is important to highlight that the closed unit ball of \({X}'\) is metrizable whenever \({X}\) is separable—that is, it admits a countable dense subset.

In that case, one can define a compatible metric on the closed unit ball of \({X}'\) by
\begin{equation}{\label{eq: d ball}}
 d(f, g) := \sum_{n = 1}^\infty 2^{-n} \left|\langle f - g, x_n \rangle\right|,   
\end{equation}
where \((x_n)_{n \in \mathbb{N}}\) is a countable dense subset of the closed unit ball of \({X}\) (see pp.48–49 of \cite{Bre83}).

The Banach–Alaoglu theorem is particularly useful in the case where \({X}\) is separable, since it implies that every bounded sequence in \({X}'\) admits a weak-$\star$ convergent subsequence. This applies, for instance, to the case \({X} = L^1(\cX)\), in which any bounded sequence in \(L^\infty(\cX)\) has a subsequence that converges in the weak-$\star$ sense.

\subsection{About Choquet's theorem}
It is well known that the two endpoints of a line segment determine all the points in between: in vector terms, the segment from \( v \) to \( w \) consists of all points of the form \( \lambda v + (1 - \lambda) w \) with \( 0 \leq \lambda \leq 1 \). A classical result by Hermann Minkowski states that in a Euclidean space, any bounded, closed, and convex set \( C \) is the convex hull of its set of extreme points \( \mathcal{E} \). This means that every point \( c \in C \) can be written as a finite convex combination of points \( e \in \mathcal{E} \). The set \( \mathcal{E} \) may be finite or infinite.

In vector terms, assigning non-negative weights \( w(e) \) to elements \( e \in \mathcal{E} \), with almost all weights being zero, any point \( c \in C \) can be represented as
\[
c = \sum_{e \in \mathcal{E}} w(e) e, \qquad \sum_{e \in \mathcal{E}} w(e) = 1,
\]
so that the weights \( w(e) \) define a probability measure supported on a finite subset of \( \mathcal{E} \). For any affine function \( f \) defined on \( C \), its value at \( c \) can then be expressed as
\[
f(c) = \int_{\mathcal{E}} f(e) \, dw(e).
\]

In infinite-dimensional settings, one aims to obtain a similar representation. This is made possible by Choquet's theorem (see p.337 of \cite{Choquet}), which we state below and will be a key tool in our approach.

\begin{thm}[Choquet]{\label{th: Choquet}}
Let \( C \) be a convex, compact, and metrizable subset of a locally convex Hausdorff topological vector space \( U \). Then:
\begin{itemize}
    \item The set \( \mathcal{E} \subset C \) of extreme points of \( C \) is a \( G_\delta \)-set (i.e., a countable intersection of open sets).
    \item Every point \( b \in C \) is the barycenter of at least one non-negative Radon measure supported on \( \mathcal{E} \); that is, there exists a Radon probability measure \( \mu \) on \( \mathcal{E} \) such that
  \begin{equation} \label{eq : Choquet representation}
  b = \int_{\mathcal{E}} r \, d\mu(r).
  	\end{equation}
  
\end{itemize}
\end{thm}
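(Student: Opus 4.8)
The plan is to prove the two assertions separately, following the classical route through maximal measures in the Choquet order. For the first bullet I would exploit metrizability directly: fix a metric $d$ inducing the topology on $C$ and, for each integer $n\ge 1$, set
$$
F_n:=\Bigl\{x\in C:\ x=\tfrac12(y+z)\ \text{for some}\ y,z\in C\ \text{with}\ d(y,z)\ge\tfrac1n\Bigr\}.
$$
Since $C$ is compact, $C\times C$ is compact, and the barycentre map $(y,z)\mapsto\tfrac12(y+z)$ is continuous, so each $F_n$ is closed. A point of $C$ fails to be extreme precisely when it lies in some $F_n$, hence $\mathcal{E}=\bigcap_{n\ge1}(C\setminus F_n)$ is a countable intersection of relatively open sets, i.e.\ a $G_\delta$. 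In particular $\mathcal{E}$ is Borel, which is what makes the phrase ``measure supported on $\mathcal{E}$'' literally meaningful in the metrizable setting.

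For the second bullet, the first step is to manufacture a continuous strictly convex function on $C$. Since $C$ is compact metrizable, the space $\mathcal{A}(C)$ of continuous affine functions on $C$ is separable and separates the points of $C$ (by Hahn--Banach, as $C$ sits in a locally convex Hausdorff space); pick a countable family $(f_k)_{k\ge1}\subset\mathcal{A}(C)$ separating points with $\sup_C|f_k|\le1$, and set $\varphi:=\sum_{k\ge1}2^{-k}f_k^2$. Then $\varphi$ is continuous and convex, and it is strictly convex: if $x=\tfrac12(y+z)$ with $y\ne z$ then $f_k(y)\ne f_k(z)$ for some $k$, and strict convexity of $t\mapsto t^2$ forces $\varphi(x)<\tfrac12(\varphi(y)+\varphi(z))$. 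Next, equip the set $\mathcal{P}(C)$ of Radon probability measures on $C$ with the weak-$\star$ topology and with the Choquet order $\mu\succeq\nu\iff\int g\,d\mu\ge\int g\,d\nu$ for every continuous convex $g$ on $C$. The set $M_b:=\{\mu\in\mathcal{P}(C):\ \int_C h\,d\mu=h(b)\ \forall h\in\mathcal{A}(C)\}$ of probability measures with barycentre $b$ contains $\delta_b$, is convex, and is weak-$\star$ closed, hence compact; since $\mu\mapsto\int g\,d\mu$ is weak-$\star$ continuous, every $\succeq$-chain in $M_b$ has an upper bound (a cluster point of the chain), so Zorn's lemma furnishes a $\succeq$-maximal element $\mu\in M_b$.

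It remains to show that any such maximal $\mu$ is carried by $\mathcal{E}$. For $g\in C(C)$ let $\hat g(x):=\inf\{h(x):h\in\mathcal{A}(C),\ h\ge g\}$, the upper envelope, a bounded concave upper semicontinuous function with $\hat g\ge g$. The key lemma is that $\mu$ is $\succeq$-maximal if and only if $\int_C(\hat g-g)\,d\mu=0$ for every $g\in C(C)$; the nontrivial implication says that if $\int(\hat g-g)\,d\mu>0$ for some convex $g$, one can build $\nu\succ\mu$ in $M_b$ by a push-up (dilation) argument based on Hahn--Banach applied to the sublinear functional $g\mapsto\int\hat g\,d\mu$ on $C(C)$. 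Applying the lemma to $g=\varphi$ gives $\mu(\{\varphi=\hat\varphi\})=1$, because $\hat\varphi-\varphi\ge0$. Finally $\{\varphi=\hat\varphi\}\subseteq\mathcal{E}$: if $x=\tfrac12(y+z)$ with $y\ne z$, then by concavity of $\hat\varphi$ and strict convexity of $\varphi$,
$$
\hat\varphi(x)\ \ge\ \tfrac12\bigl(\hat\varphi(y)+\hat\varphi(z)\bigr)\ \ge\ \tfrac12\bigl(\varphi(y)+\varphi(z)\bigr)\ >\ \varphi(x),
$$
so $x\notin\{\varphi=\hat\varphi\}$. Hence $\mu(\mathcal{E})=1$, and since $\int h\,d\mu=h(b)$ for all affine $h$, the point $b$ is the barycentre of the Radon probability measure $\mu$ supported on $\mathcal{E}$, which is exactly \eqref{eq : Choquet representation}.

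The main obstacle is the maximal-measure lemma used above: proving that $\succeq$-maximality of $\mu$ is equivalent to $\int(\hat g-g)\,d\mu=0$ for all $g$, and in particular that a non-maximal measure can always be strictly increased while keeping the barycentre fixed. This needs the basic calculus of the upper envelope ($\widehat{g+h}=\hat g+h$ for affine $h$, positive homogeneity $\widehat{\lambda g}=\lambda\hat g$ for $\lambda\ge0$, concavity and upper semicontinuity of $\hat g$) together with a Hahn--Banach separation on $C(C)$; by comparison the $G_\delta$ computation, the construction of $\varphi$, and the Zorn's lemma step are routine.
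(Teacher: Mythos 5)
The paper does not prove Theorem~\ref{th: Choquet}: it is stated as a classical result and cited from \cite{Choquet} (p.~337), and the paper later invokes precisely the Choquet-order machinery (Lemma~1, Corollary~1, Lemma~5 of \cite{Choquet}) inside the proof of Proposition~\ref{prop : maximisation pb convex}, so there is no ``paper's own proof'' to compare against. Your proof is the standard one and is consistent with the cited source: the closed midpoint sets $F_n$ give the $G_\delta$ structure; a strictly convex continuous $\varphi=\sum_k 2^{-k}f_k^2$ is built from a countable separating family of continuous affine functions (this is where metrizability enters, via separability of $\mathcal{A}(C)\subset C(C)$); Zorn's lemma in the Choquet order on the compact convex set $M_b$ of probabilities with barycentre $b$ yields a maximal measure $\mu$; and the upper-envelope characterization forces $\mu(\{\varphi=\hat\varphi\})=1\subseteq\mathcal{E}$. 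The argument is sound, and you correctly identify the one genuinely nontrivial step — that $\succeq$-maximality of $\mu$ is equivalent to $\int_C(\hat g-g)\,d\mu=0$ for all $g\in C(C)$, established by a Hahn--Banach dilation argument — rather than glossing over it; the remaining pieces (closedness of $F_n$ via compactness of $C\times C$ and continuity of the midpoint map, closedness and compactness of $M_b$, the chain-upper-bound verification, and the strict-convexity computation showing $\{\varphi=\hat\varphi\}\subseteq\mathcal{E}$) are all correct as written.
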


In the following, we will apply Choquet's theorem with \( U = L^\infty(\mathcal{X}) \), where \( \mathcal{X} \) denotes the space from which the privacy mechanisms originate and is an open subset of $\R^d$. %to simplify, but the results extend to a more general case (see Remark \ref{rk: extension functional space} below for details). Recall that \( L^\infty(\mathcal{X}) \) is the topological dual of \( L^1(\mathcal{X}) \), and when equipped with the weak-\(\star\) topology, it satisfies the assumptions of Choquet's theorem, since all normed vector spaces are locally convex.

As anticipated in Section \ref{ss: infinite}, in our setting it will be essential to apply Choquet’s theorem to the set
\begin{equation}{\label{eq: def B}}
  \theconvex = \left\{ v : \mathcal{X} \rightarrow \mathbb{R} \, \text{ such that } \, 1 \leq v(x) \leq e^\alpha \text{ a.e.} \right\}. 
\end{equation}
To this end, we must verify that the assumptions of Choquet's theorem are fulfilled for this subset of \( L^\infty(\mathcal{X}) \). Note that \( \theconvex \) can be interpreted as a truncated closed ball of radius \( e^\alpha \), with the unit ball removed. As previously mentioned, although the dual space is not metrizable in general if infinite-dimensional, one can construct a compatible metric on its closed unit ball using the formula in \eqref{eq: d ball}. We begin by showing that a similar construction can be applied to the set \( \theconvex \).

We introduce the notation $\widehat{B}$ for the closed ball, 
\begin{equation}\label{eq: def Ball L infty}
\widehat{B}=B_{\|\cdot\|_\infty}(0, e^{\alpha}) =\left\{ v : \mathcal{X} \rightarrow \mathbb{R} \, \text{ such that } \, \abs{ v(x)} \leq e^\alpha \text{ a.e.} \right\}.
\end{equation}

\begin{lem}{\label{l: B metrizable}}
The set $\theconvex$ defined in \eqref{eq: def B} equipped with the weak-$\star$ topology is metrizable by some metric denoted $d_\star$.  
\end{lem}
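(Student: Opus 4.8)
The plan is to reduce the statement to the standard fact that the closed ball of the dual of a \emph{separable} Banach space is weak-$\star$ metrizable. Here the relevant duality is $L^\infty(\cX) = (L^1(\cX))'$, with pairing $\langle v,u\rangle = \int_\cX v\,u$, and $L^1(\cX)$ is separable because $\cX$ is an open subset of $\R^d$ carrying the $\sigma$-finite Lebesgue measure (e.g. finite rational-coefficient combinations of indicators of dyadic cubes contained in $\cX$ form a countable dense subset of $L^1(\cX)$). Fixing a countable dense subset $(u_n)_{n\ge 1}$ of the closed unit ball of $L^1(\cX)$, I would set, for $v,w\in\theconvex$,
\[
d_\star(v,w):=\sum_{n\ge 1} 2^{-n}\,\abs*{\int_\cX (v-w)\,u_n}\,,
\]
which is exactly the metric of \eqref{eq: d ball} (up to the harmless rescaling $v\mapsto e^{-\alpha}v$) transported from the closed unit ball to $\widehat B = B_{\norm{\cdot}_\infty}(0,e^\alpha)\supset\theconvex$ of \eqref{eq: def Ball L infty}.

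\textbf{Key steps, in order.}
(i) Recall from \cite{Bre83} (pp.~48--49) that, since $L^1(\cX)$ is separable, the closed unit ball of $L^\infty(\cX)=(L^1(\cX))'$ is weak-$\star$ metrizable by the metric of \eqref{eq: d ball}; since multiplication by $e^{-\alpha}$ is a weak-$\star$ homeomorphism of $L^\infty(\cX)$ carrying $\widehat B$ onto the closed unit ball, $\widehat B$ is weak-$\star$ metrizable, and hence so is its subset $\theconvex$ endowed with the subspace (i.e.\ weak-$\star$) topology; a compatible metric is the restriction of $d_\star$ to $\theconvex\times\theconvex$. (ii) For a self-contained verification that the displayed $d_\star$ works: the series converges because $\abs{\int_\cX(v-w)u_n}\le\norm{v-w}_\infty\norm{u_n}_1\le 2e^\alpha$; symmetry and the triangle inequality are immediate; and $d_\star(v,w)=0$ forces $\int_\cX(v-w)u_n=0$ for all $n$, hence $\int_\cX(v-w)u=0$ for all $u\in L^1(\cX)$ by density, hence $v=w$ a.e.\ (test against indicators of finite-measure sets, using $\sigma$-finiteness of $\cX$). (iii) To see $d_\star$ induces the weak-$\star$ topology on $\theconvex$: each map $v\mapsto\int_\cX v\,u_n$ is weak-$\star$ continuous by Definition~\ref{def: weakstar top}, so the partial sums of $d_\star$ are weak-$\star$ continuous and, as the tails are uniformly bounded by $\sum_{n>N}2^{-n}\cdot 2e^\alpha\to 0$, $d_\star$ is weak-$\star$ continuous on $\theconvex\times\theconvex$; thus the $d_\star$-topology is coarser than the weak-$\star$ topology. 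For the converse inclusion, $\widehat B$ is weak-$\star$ compact by the Banach--Alaoglu theorem (Theorem~\ref{th: BanAla}, applied to $L^1(\cX)$ and rescaled), so the identity map $(\widehat B,\text{weak-}\star)\to(\widehat B,d_\star)$ is a continuous bijection from a compact space to a Hausdorff space, hence a homeomorphism; restricting to $\theconvex$ shows the two subspace topologies coincide.

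\textbf{Main obstacle.}
Nothing here is deep; the two points needing genuine care are the separability of $L^1(\cX)$ (which underlies the metrizability of the ball and is where the structure of $\cX\subset\R^d$ enters) and, more subtly, the passage from \emph{comparability} to \emph{equality} of the two topologies on $\theconvex$ --- this is false on all of $L^\infty(\cX)$ and is recovered only by exploiting compactness of $\widehat B$ via Banach--Alaoglu. Everything else is the routine Brezis argument.
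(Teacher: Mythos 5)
Your proof is correct and takes essentially the same route as the paper: both invoke the separability of $L^1(\cX)$, define $d_\star$ by the standard Brezis formula on $\widehat B$, and cite (or, in your case, reprove via Banach--Alaoglu) the metrizability of the weak-$\star$ topology on a bounded ball of the dual of a separable space. The paper simply refers to \cite{Bre83} for the metrizability of $\widehat B$ rather than carrying out the compact-to-Hausdorff homeomorphism argument as you do, but the underlying idea is identical.
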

\begin{proof}
Recall that \( L^1(\mathcal{X}) \) is a separable space, meaning that it admits a countable dense subset. Let us denote such a sequence by \( (x_n)_{n \in \mathbb{N}} \). Using this sequence, one can construct a compatible metric on the closed ball of radius \( e^\alpha \) in \( L^\infty(\mathcal{X}) \). Analogously to the distance \( d \) introduced in \eqref{eq: d ball} for the closed unit ball, we define:
\begin{equation}{\label{eq: def dstar}}
d_\star(f, g) := \sum_{n = 1}^\infty 2^{-n} \left|\langle f - g, x_n \rangle_{L^1, L^\infty}\right|.   
\end{equation}
This distance metrizes the closed ball $\widehat{B}$,
%of radius \( e^\alpha \), denoted by \( B_{\|\cdot\|_\infty}(0, e^{\alpha}) \), 
with respect to the weak-\(\star\) topology. Since the set \( \theconvex \) is a subset of $\widehat{B}$, % \( B_{\|\cdot\|_\infty}(0, e^{\alpha}) \), 
it is metrizable as well.
\end{proof}

We now turn to the proof of the compactness of \( C \). In Lemma \ref{l: B closed}, whose proof can be found in Section \ref{s: proof technical}, we show that \( \theconvex \) is closed with respect to the weak-\(\star\) topology. The compactness of \( \theconvex \) then follows from compactness of $\widehat{B}$ by the Banach–Alaoglu Theorem, as stated in Theorem \ref{th: BanAla}.

\begin{lem}{\label{l: B closed}}
The set $\theconvex$ defined in \eqref{eq: def B} is a compact with respect to the weak-$\star$ topology.  
\end{lem}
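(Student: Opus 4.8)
The plan is to realize $\theconvex$ as a weak-$\star$ closed subset of the ball $\widehat{B}=B_{\|\cdot\|_\infty}(0,e^\alpha)$ of $L^\infty(\mathcal{X})=(L^1(\mathcal{X}))'$, which is weak-$\star$ compact by Banach--Alaoglu (Theorem~\ref{th: BanAla}); since a closed subset of a compact set is compact, this yields the claim. (The convexity of $\theconvex$ is immediate, as the constraint $1\le v(x)\le e^\alpha$ a.e.\ is stable under convex combinations.) To get closedness, recall that the weak-$\star$ topology on $L^\infty(\mathcal{X})$ is by definition the coarsest one making all the maps $J_\varphi:v\mapsto\langle v,\varphi\rangle=\int_\mathcal{X} v\varphi$ continuous, for $\varphi\in L^1(\mathcal{X})$. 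Hence, for each non-negative $\varphi\in L^1(\mathcal{X})$, the sets
\[
H^-_\varphi:=\{v\in\widehat{B}:\langle v,\varphi\rangle\ge\textstyle\int_\mathcal{X}\varphi\},\qquad
H^+_\varphi:=\{v\in\widehat{B}:\langle v,\varphi\rangle\le e^\alpha\textstyle\int_\mathcal{X}\varphi\}
\]
are weak-$\star$ closed in $\widehat{B}$, being preimages of closed half-lines under the restriction of the continuous map $J_\varphi$. I would then prove the identity
\[
\theconvex=\bigcap_{\varphi\in L^1(\mathcal{X}),\,\varphi\ge0}\bigl(H^-_\varphi\cap H^+_\varphi\bigr),
\]
so that $\theconvex$ is weak-$\star$ closed as an arbitrary intersection of weak-$\star$ closed subsets of the compact set $\widehat{B}$.

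The inclusion $\subseteq$ is clear: if $1\le v\le e^\alpha$ a.e., then $\int_\mathcal{X}\varphi\le\int_\mathcal{X} v\varphi\le e^\alpha\int_\mathcal{X}\varphi$ for every non-negative $\varphi\in L^1(\mathcal{X})$. For $\supseteq$, suppose $v\in\widehat{B}$ belongs to every $H^-_\varphi$. Fix a measurable set $K\subset\mathcal{X}$ of finite Lebesgue measure and test against $\varphi=\indi{\{v<1\}\cap K}\in L^1(\mathcal{X})$, $\varphi\ge0$; this gives $\int_{\{v<1\}\cap K}v\ge|\{v<1\}\cap K|$, which is impossible unless $|\{v<1\}\cap K|=0$, since the integrand is strictly below $1$ on that set. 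As $\mathcal{X}\subset\R^d$ is $\sigma$-finite for the Lebesgue measure, letting $K$ exhaust $\mathcal{X}$ yields $v\ge1$ a.e.; the symmetric argument with $H^+_\varphi$ and $\varphi=\indi{\{v>e^\alpha\}\cap K}$ yields $v\le e^\alpha$ a.e., so $v\in\theconvex$. Combining the two inclusions gives the identity above, hence the compactness of $\theconvex$.

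I do not anticipate a real obstacle here: the whole content is the duality characterization of the pointwise a.e.\ bounds, and the only point that needs a little care is the $\sigma$-finiteness step used to upgrade a family of finite-measure test sets to an a.e.\ statement. One mild technical remark worth recording is that the constant functions $1$ and $e^\alpha$ need not lie in $L^1(\mathcal{X})$ when $\mathcal{X}$ has infinite measure; this is harmless, since only the pairings $\langle 1,\varphi\rangle=\int_\mathcal{X}\varphi$ and $\langle e^\alpha,\varphi\rangle=e^\alpha\int_\mathcal{X}\varphi$ against $\varphi\in L^1(\mathcal{X})$ are ever used, and these are finite. A fully sequential variant of the argument is also available, since $\theconvex$ is metrizable by $d_\star$ (Lemma~\ref{l: B metrizable}), but the purely topological version above is cleaner and does not require metrizability.
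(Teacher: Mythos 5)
Your proof is correct and follows essentially the same route as the paper: both realize $\theconvex$ as a weak-$\star$ closed subset of the Banach--Alaoglu-compact ball $\widehat{B}$ and establish closedness by testing the a.e.\ bounds against non-negative $\varphi\in L^1(\mathcal{X})$. The only difference is stylistic — you phrase closedness as an intersection of closed half-spaces $H^\pm_\varphi$ (which avoids invoking metrizability), whereas the paper runs the same duality argument sequentially, relying on the metrizability of $\widehat{B}$.
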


\begin{rem}\label{rem : E not closed}
We show that the set $\theconvex$ is closed with respect to the weak-$\star$ topology. On the other hand, in general, we cannot expect the set $\cE$ to be closed. For instance, take $\cX = [0,1]$, and for $n \ge 1$, define  
\[
r_n = \sum_{k=0}^{n-1} \left( \indi{\left[\frac{2k}{2n}, \frac{2k+1}{2n}\right)} + e^\alpha \, \indi{\left[\frac{2k+1}{2n}, \frac{2k+2}{2n}\right)} \right).
\]
The sequence $(r_n)_n$ takes values in $\cE$. However, it can be shown that $r_n \xrightharpoonup{\star} v$, where $v(x) = \frac{1 + e^\alpha}{2}$ for all $x \in \cX$. Since $v \notin \cE$, we conclude that the set $\cE$ is not closed.

\end{rem}

The set \( C \), as defined in Equation \eqref{eq: def B}, is also convex. Therefore, by Lemmas \ref{l: B metrizable} and \ref{l: B closed}, we are in a position to apply Choquet’s theorem. In particular, this implies that any \( b \in C \subset L^\infty(\mathcal{X}) \) can be represented as  
\[
b = \int_{\mathcal{E}} r \, d\mu(r),
\]  
for some probability measure \( \mu \) supported on the extreme points of \( C \). This representation is an equality between functions of $L^\infty(\cX)$. 
Our next objective is to give a rigorous meaning to the pointwise representation  
\[
b(x) = \int_{\mathcal{E}} r(x) \, d\mu(r), \quad \text{for } x \in \mathcal{X},
\]  
which will be the focus of the upcoming subsection. Before doing so, let us provide extra details on the extremal points of $\theconvex$, as in the following lemma.

\begin{lem}\label{lem : extremal points}
The element	$r \in C$ is extremal iff $r(x)\in \{1,e^\alpha\}$ $dx$ almost everywhere. 
\end{lem}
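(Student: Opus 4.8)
The plan is to verify both implications directly from the definition of an extreme point — namely that $r\in C$ is extremal iff it admits no representation $r=\tfrac12(v+w)$ with $v,w\in C$ and $v\neq w$ in $L^\infty(\cX)$ — since in this $L^\infty$ setting the characterization reduces to an essentially pointwise convexity argument.

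For the "if" direction, I would assume $r(x)\in\{1,e^\alpha\}$ for a.e.\ $x$ and take any $v,w\in C$ with $r=\tfrac12(v+w)$. On $\{r=1\}$ the identity $v(x)+w(x)=2$ together with $v(x),w(x)\ge 1$ a.e.\ forces $v(x)=w(x)=1$ a.e.\ there; symmetrically, on $\{r=e^\alpha\}$ the identity $v(x)+w(x)=2e^\alpha$ with $v(x),w(x)\le e^\alpha$ forces $v(x)=w(x)=e^\alpha$ a.e. Since $\{r=1\}\cup\{r=e^\alpha\}=\cX$ up to a null set, this gives $v=w=r$ in $L^\infty(\cX)$, hence $r$ is extremal.

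For the "only if" direction I would argue by contraposition. If $r(x)\notin\{1,e^\alpha\}$ on a set of positive Lebesgue measure, then writing $\{1<r<e^\alpha\}=\bigcup_{k\ge1}A_{1/k}$ with $A_\varepsilon:=\{x\in\cX:\ 1+\varepsilon\le r(x)\le e^\alpha-\varepsilon\}$, continuity of measure produces some $\varepsilon>0$ with $\abs{A_\varepsilon}>0$. I would then set $v:=r+\varepsilon\indi{A_\varepsilon}$ and $w:=r-\varepsilon\indi{A_\varepsilon}$: both are measurable, both satisfy $1\le v,w\le e^\alpha$ a.e.\ (by the choice of $A_\varepsilon$ on that set, and trivially off it), so $v,w\in C$; and $r=\tfrac12(v+w)$ while $\norm{v-w}_\infty=2\varepsilon>0$ since $\abs{A_\varepsilon}>0$, so $v\neq w$ in $L^\infty(\cX)$, showing $r$ is not extremal.

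The argument is short, so I do not anticipate a genuine obstacle; the only points requiring attention are bookkeeping ones — elements of $C$ are equivalence classes modulo $dx$-null sets, so both "extremal" and "$v\neq w$" must be understood in $L^\infty(\cX)$ — and the extraction of a single $\varepsilon$ in the contraposition step, which rests on the $\sigma$-additivity of Lebesgue measure. In particular, the perturbation argument goes through without appealing to non-atomicity of Lebesgue measure on the open set $\cX\subset\R^d$.
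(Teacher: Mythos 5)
Your proof is correct and follows essentially the same argument as the paper: the "only if" direction is by contraposition via the perturbation $r\pm\varepsilon\indi{A_\varepsilon}$ on a positive-measure set where $r$ is bounded away from both endpoints, exactly as in the paper (which calls your $A_\varepsilon$ simply $\mathcal{N}$). You supply two small details the paper leaves implicit — the explicit pointwise argument for the "if" direction, and the continuity-of-measure step extracting a single $\varepsilon>0$ — but the approach is the same.
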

\begin{proof}
We recall that $r$ is extremal in $\theconvex$ if any decomposition $r = \frac{r_1}{2} + \frac{r_2}{2}$ with $r_1$ and $r_2$ in $\theconvex$ implies $r_1 = r_2 = r$ a.e.. It is then clear that if $r(x) \in \{1, e^\alpha\}$ $dx$-a.e., then $r$ is extremal.  

Conversely, let us show that if $r$ is extremal, it takes a.e. only the two values $1$ and $e^\alpha$. By contradiction, if this is not the case, then there exists some $\varepsilon > 0$ and $\mathcal{N} \subset \mathcal{X}$ such that for $x \in \mathcal{N}$, $r(x) \in [1+\varepsilon, e^\alpha - \varepsilon]$ and $\mathcal{N}$ has positive Lebesgue measure. Then we can split $r$ as
\[
r = \frac{1}{2} \bigl( r \indi{\mathcal{N}^c} + (r+\varepsilon) \indi{\mathcal{N}} \bigr)
+ \frac{1}{2} \bigl( r \indi{\mathcal{N}^c} + (r-\varepsilon) \indi{\mathcal{N}} \bigr)
= \frac{r_1}{2} + \frac{r_2}{2}.
\]
This shows that $r$ is not extremal, arriving at a contradiction.
\end{proof}

\subsection{Point evaluation operator and Bochner integral}{\label{ss:point eval}}
The aim of this subsection is to give a rigorous interpretation to the pointwise representation  
\[
b(x) = \int_{\mathcal{E}} r(x) \, d\mu(r)
\quad \text{for almost every } x \in \mathcal{X} = \R^d.
\]  
To achieve this, the notion of the Bochner integral, introduced in Definition \ref{def: Bocnher} below, will play a key role. 
The measure $\mu$ is supported on $\cE\subset \widehat{B}\subset L^\infty(\cX)$, and the RHS of \eqref{eq : Choquet representation} is an integral of a function valued in the Banach space $U=L^\infty(\cX)$ (see \cite{DieUhl_Book}, \cite{Lang_Book}  for presentation of Banach valued integrals). In particular, the integral can be seen as a Dunford integral on this Banach space (see Lemma 1 p. 52 of \cite{DieUhl_Book}). However, we will precise that this integral is also a Bochner integral for the $d_\star$ distance, meaning that an approximation as given in Definition \ref{def: Bocnher} below is valid. We will deduce a rigorous statement for the pointwise representation.

%We begin by discussing the Bochner integral and the point evaluation operator in a general setting, before showing how this framework applies specifically to our set \( C \).

\begin{df}{\label{def: Bocnher}}
	Let \( (\Omega, \mathcal{F}, \mu) \) be a measure space, where $\mu$ is a sub-probability and let 
	$f : (\Omega, \mathcal{F}) \mapsto (\widehat{B},\cB(\widehat{B}))$ be a measurable map, where $\cB(\widehat{B})$ is the Borel $\sigma$-field of $(\widehat{B}, d_\star)$.
	%	\( Y \) be a Banach space. A measurable function \( f: X \rightarrow Y \) 
	The map $f$ is said to be Bochner integrable if there exists a sequence of integrable simple functions \( (s_n) \) such that
	\[
	\lim_{n \to \infty} \int_\Omega d_\star(f,s_n) d\mu = 0,
	\]
	where the integral on the left-hand side is the usual Lebesgue integral. In this case, the Bochner integral of \( f \) is defined as
	\[
	\int_\Omega f \, d\mu := \lim_{n \to \infty} \int_\Omega s_n \, d\mu,
	\]
	where the convergence holds with the $d_\star$ distance.
\end{df}
%
%\begin{df}{\label{def: Bocnher}}
%Let \( (X, \mathcal{F}, \mu) \) be a measure space, and let \( Y \) be a Banach space. A measurable function \( f: X \rightarrow Y \) is said to be Bochner integrable if there exists a sequence of integrable simple functions \( (s_n) \) such that
%\[
%\lim_{n \to \infty} \int_X \|f - s_n\|_Y \, d\mu = 0,
%\]
%where the integral on the left-hand side is the usual Lebesgue integral. In this case, the Bochner integral of \( f \) is defined as
%\[
%\int_X f \, d\mu := \lim_{n \to \infty} \int_X s_n \, d\mu.
%\]
%\end{df}
%

It can be shown that the sequence \( \left\{ \int_{\Omega} s_n \, d\mu \right\}_{n=1}^\infty \) is a Cauchy sequence for $d_\star$ distance %in the Banach space \( Y \), 
and hence the limit exists (see Remark \ref{rk: Cauchy} below). Moreover, this limit is independent of the choice of approximating simple functions \( \{s_n\}_{n=1}^\infty \), which ensures that the Bochner integral is well defined. 

%It can be shown that the sequence \( \left\{ \int_X s_n \, d\mu \right\}_{n=1}^\infty \) is a Cauchy sequence in the Banach space \( Y \), and hence the limit exists. Moreover, this limit is independent of the choice of approximating simple functions \( \{s_n\}_{n=1}^\infty \), which ensures that the Bochner integral is well defined. 

An important property of Bochner integral is that, if \( T: U \rightarrow \mathbb{R} \) is a continuous linear operator
% between Banach spaces \( Y \) and \( Z \), and if \( f: X \rightarrow Y \) is Bochner integrable, 
then \( T \circ f: \Omega \rightarrow Z \) is also Bochner integrable. In this case, integration and the application of \( T \) can be interchanged:
\begin{equation}{\label{eq: prop Bochner}}
	\int_{\Omega} T f \, d\mu = T \left( \int_{\Omega} f \, d\mu \right).   
\end{equation}
%for all measurable subsets \( \mathcal{E} \in \mathcal{F} \).\\
\\
%
%An important property of Bochner integral is that, if \( T: Y \rightarrow Z \) is a continuous linear operator between Banach spaces \( Y \) and \( Z \), and if \( f: X \rightarrow Y \) is Bochner integrable, then \( T \circ f: X \rightarrow Z \) is also Bochner integrable. In this case, integration and the application of \( T \) can be interchanged:
%\begin{equation}{\label{eq: prop Bochner}}
% \int_{\mathcal{E}} T f \, d\mu = T \left( \int_{\mathcal{E}} f \, d\mu \right)   
%\end{equation}
%for all measurable subsets \( \mathcal{E} \in \mathcal{F} \).\\
%\\
We would like to apply the result above to the evaluation operator at a fixed point $x\in \cX$, $f\in U \mapsto f(x)\in \mathbb{R}$. However, such operator is not continuous for the $d_\star$ distance. 

We formally introduce %on a general space $Y$ 
as follows the evaluation operator. For any \( x \in \cX \), consider the map
\begin{align}\nonumber %\label{eq: evaluation}
e_x : \quad & \widehat{B} \rightarrow \mathbb{R} \\
\label{eq : def evaluation operator}
& b \mapsto \limsup_{\epsilon \to 0} \frac{1}{|B(x, \epsilon)|} \int_{B(x, \epsilon)\cap\cX} b(y) \, dy,
\end{align}
where \( B(x, \epsilon) \) denotes the ball of radius \( \epsilon \) centered at \( x \), and \( |B(x, \epsilon)| := \int_{{\R}^d} \indi{B(x, \epsilon)}(y) \, dy \) is its Lebesgue measure. 

Note that \( e_x(b) = b(x) \) whenever \( x\in\cX \) is a Lebesgue point of the function \( b \), and consequently for almost every \( x \) with respect to the Lebesgue measure. The reason why the evaluation is defined though the limit \eqref{eq : def evaluation operator} is that it enables to prove the joint measurability $(x,b) \mapsto e_x(b)$ in Proposition \ref{p: evaluation} below. We will also justify that the Bochner integral property stated in \eqref{eq: prop Bochner} applies even without continuity of the evaluation operator. It leads in particular to
%  is a Lebesgue point of \( y \).  %, and that if \( y \) is essentially bounded, then almost every \( x \) (with respect to the Lebesgue measure) is a Lebesgue point of \( y \). The operator \( e_x \) is introduced in order to ensure bi-measurability. One can verify that this operator is linear. If it were also continuous, then—by virtue of the Bochner integral property stated in \eqref{eq: prop Bochner}—we would directly obtain, for any \( y \in Y \) admitting a representation of the form \( y = \int_Y f \, d\mu(f) \), and for almost every $x \in \R^d$,
\[
b(x) = e_x(b) = e_x\left( \int_\cE r \, d\mu(r) \right) = \int_\cE e_x(r) \, d\mu(r), ~ dx-a.e.,
\]
which is precisely the rigorous pointwise evaluation we aim to justify.

%However, a key difficulty arises from the fact that the evaluation operator \( e_x \) is not continuous with respect to the metric \( d_\star \) defined in \eqref{eq: def dstar}. As discussed in Lemma \ref{l: B metrizable}, this metric is crucial because it metrizes the weak-\(\star\) topology on \( \theconvex \). Nevertheless, even in the absence of continuity of \( e_x \), it is still possible to recover the desired pointwise representation of the Bochner integral, provided we have additional structural information about the space under consideration, as we will see at the end of this subsection for our set $\theconvex$. \\

To establish the desired pointwise evaluation, the first step is to show that the integral over the extreme points, as given by Choquet’s theorem, is in fact a Bochner integral. To this end, it is essential to verify that \( \widehat{B} \) is separable, as stated in the following lemma. Its proof can be found in Section \ref{s: proof technical}.

\begin{lem}{\label{l: B separable}}
The set $\widehat{B}$ defined in \eqref{eq: def Ball L infty} equipped with the weak-$\star$ topology is separable.  
\end{lem}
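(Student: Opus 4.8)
The goal is to show that $\widehat{B} = B_{\|\cdot\|_\infty}(0,e^\alpha) \subset L^\infty(\mathcal{X},\mathbb{R})$, endowed with the weak-$\star$ topology it inherits as (a subset of) the dual of $L^1(\mathcal{X})$, is separable. The plan is to exploit that $L^1(\mathcal{X})$ is itself separable (since $\mathcal{X}\subset\mathbb{R}^d$ is open with the Lebesgue measure), and that the weak-$\star$ topology on a norm-bounded set of $(L^1)' = L^\infty$ is metrized by the distance $d_\star$ introduced in \eqref{eq: def dstar}, built from a countable dense sequence $(x_n)_{n\in\mathbb{N}}$ of $L^1(\mathcal{X})$. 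Since a metric space is separable if and only if it is second countable, and since subsets of separable metric spaces are separable, it suffices to produce a countable $d_\star$-dense subset of $\widehat{B}$.

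The concrete construction I would use: fix an exhaustion $\mathcal{X} = \bigcup_{k} K_k$ of $\mathcal{X}$ by compact sets, and for each $k$ consider a countable family of measurable partitions of $K_k$ into sets of small diameter — for instance the trace on $K_k$ of the dyadic cubes of $\mathbb{R}^d$ at level $m$. Let $\mathcal{S}$ be the (countable) collection of all functions that are constant on each dyadic cube of some level $m$, vanish outside $K_k$ for some $k$, and take rational values in $[-e^\alpha, e^\alpha]$. I claim $\mathcal{S}$ is $d_\star$-dense in $\widehat{B}$. To see this, fix $b\in\widehat{B}$ and $\varepsilon>0$. Choose $N$ with $\sum_{n>N} 2^{-n}\cdot 2 e^\alpha \|x_n\|_{L^1}$ controlled — more precisely, since $|\langle b - s, x_n\rangle| \le 2e^\alpha \|x_n\|_{L^1}$, the tail $\sum_{n>N} 2^{-n}|\langle b-s,x_n\rangle|$ is small uniformly in $s\in\widehat{B}$; here I should be slightly careful and instead note $|\langle b-s,x_n\rangle|\le 2e^\alpha\|x_n\|_1$ but the weights $2^{-n}$ already force summability only if $\|x_n\|_1$ stays bounded, which I can arrange by taking the dense sequence inside the unit ball of $L^1$, so $\|x_n\|_1\le 1$. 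Then the tail beyond $N$ is at most $2e^\alpha 2^{-N}$, small. For the finitely many $n\le N$, I need $s\in\mathcal{S}$ with $\int_{\mathcal{X}} (b-s)\,x_n\,dx$ small for each $n\le N$ simultaneously; this follows from density of step functions (constant on dyadic cubes, supported on a large compact set, rational values, clipped to $[-e^\alpha,e^\alpha]$) in $L^1$ together with the boundedness of $b$ and $x_n\in L^1$, via a standard $L^1$/$L^\infty$ duality estimate $|\int (b-s)x_n| \le \|b-s\|_{L^\infty(K)}\|x_n\|_{L^1} + \|b\|_\infty\|x_n\mathbf 1_{K^c}\|_{L^1}$ — actually cleaner: approximate $x_n$ in $L^1$ by a continuous compactly supported $g_n$, approximate $b$ uniformly on a large compact set by such an $s$ (possible since the dyadic step functions with rational values are $L^\infty$-dense, away from a null set they approximate Lebesgue points of $b$), and bound each cross term.

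The main obstacle, and the step deserving the most care, is the simultaneous approximation: we need one $s\in\mathcal{S}$ that is good against $x_1,\dots,x_N$ at once, and $s$ must lie in the prescribed countable clipped family (rational values, dyadic cubes, bounded support). This is handled by first choosing a single compact $K$ and a single dyadic level $m$ fine enough that the conditional-expectation-type projection $P_{m,K}b$ of $b$ onto step functions is within $\delta$ of $b$ in $L^1(K)$ — possible by the martingale/Lebesgue differentiation argument — then rounding its (bounded) values to nearby rationals in $[-e^\alpha,e^\alpha]$, which changes $\int s\,x_n$ by at most $(\text{rounding})\cdot\|x_n\|_1$. Choosing $\delta$ and the rounding mesh small relative to $\varepsilon/N$ closes the argument. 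Since $\mathcal{S}$ is manifestly countable (countably many $(k,m)$, finitely many cubes for each, countably many rational value-assignments), this exhibits a countable $d_\star$-dense subset of $\widehat{B}$, proving separability. Finally, I would remark that separability of $\widehat{B}$ immediately gives separability of $\theconvex\subset\widehat{B}$ in the weak-$\star$ topology, which is what is actually invoked in Section~\ref{ss:point eval}.
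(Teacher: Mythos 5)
Your overall strategy is the same as the paper's: exhibit a countable $d_\star$-dense subset of $\widehat{B}$ by discretizing bounded functions on large compacts. The paper truncates an $L^1(\cX)$-dense sequence to the band $[-e^\alpha,e^\alpha]$; you propose dyadic step functions on compacts with rational values. Either countable family is a reasonable candidate, and your tail estimate (using $\|x_n\|_{L^1}\le 1$) is correct and matches the paper's setup for $d_\star$.

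There is, however, a genuine gap in the finite-index part. You control $\|b - P_{m,K}b\|_{L^1(K)}<\delta$, but each $x_n$ is only assumed to lie in $L^1$; the pairing $\bigl|\int_K (b-s)x_n\,dx\bigr|$ is bounded by $\|b-s\|_{L^1(K)}\|x_n\|_{L^\infty}$ or by $\|b-s\|_{L^\infty(K)}\|x_n\|_{L^1}$, and you have neither ingredient. The parenthetical fallback — that dyadic step functions are ``$L^\infty$-dense'' — is false: for instance, if $b$ is the indicator of a set that is not a union of dyadic cubes, then for every step function $s$ at a fixed dyadic level, $\|b-s\|_{L^\infty}\ge 1/2$ on any cube where $b$ takes both values with positive measure. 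Lebesgue differentiation gives a.e.\ (not uniform) convergence, so it cannot rescue a sup-norm estimate. Two standard fixes are available. (i) Before estimating, replace each $x_n$ ($n\le N$) by a bounded $g_n\in L^1\cap L^\infty$ with $\|x_n-g_n\|_{L^1}$ small; absorb the error term $\int_K(b-s)(x_n-g_n)$ using $\|b-s\|_\infty\le 2e^\alpha$, and only then pair the $L^1$ bound on $b-s$ against the $L^\infty$ bound on $g_n$. (ii) Do what the paper does: from $L^1$ convergence of the approximants to $b\indi{K}$, pass to an a.e.-convergent subsequence; truncation to $[-e^\alpha,e^\alpha]$ preserves a.e.\ convergence; then dominated convergence (all functions bounded by $e^\alpha$, $x_n\in L^1$) gives $\int(b-s_j)x_n\to 0$ for every $n$. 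The second route requires no boundedness of $x_n$ and no uniform estimate, which is precisely why the paper's proof never invokes a H\"older pairing of the kind your last step attempts.
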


The separability of \( \widehat{B} \), as established above, plays a crucial role in ensuring that integrals over \( \widehat{B} \) are Bochner integrals. In particular, this means—according to Definition \ref{def: Bocnher}—that they can be approximated by integrals of simple functions.

\begin{lem}{\label{l: Bochner}}
	The set $\widehat{B}$ be the set defined in \eqref{eq: def Ball L infty}
%Let $B$ be the set defined in \eqref{eq: def B}, 
equipped with the weak-$\star$ topology, and corresponding Borel $\sigma$-field. Let \( \mu \) be a sub-probability measure on \( \widehat{B} \). Then,  \( \int_{\widehat{B}} f  d\mu(f) \) is a Bochner integral.
\end{lem}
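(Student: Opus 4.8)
The goal is to show that for any sub-probability measure $\mu$ on $(\widehat{B}, \mathcal{B}(\widehat{B}))$ and any measurable $f : (\Omega,\mathcal{F},\mu) \to (\widehat{B}, d_\star)$, the Dunford-type integral $\int_{\widehat{B}} f\, d\mu(f)$ (here $f$ is the identity on $\widehat{B}$, $\Omega = \widehat{B}$) is in fact a Bochner integral in the sense of Definition \ref{def: Bocnher}. By the classical Bochner integrability criterion, in a setting where the target space is metrizable, it suffices to produce a sequence of integrable simple functions $(s_n)$ with $\int_\Omega d_\star(f, s_n)\, d\mu \to 0$. The plan is to exploit the separability of $\widehat{B}$ for the weak-$\star$ topology (Lemma \ref{l: B separable}) together with the fact that $(\widehat{B}, d_\star)$ is a bounded metric space (indeed $d_\star \le 2e^\alpha$ since each pairing $|\langle f-g, x_n\rangle| \le \|f-g\|_\infty \|x_n\|_{L^1} \le 2e^\alpha$ when the $x_n$ are chosen in the unit ball of $L^1$, so $d_\star$ is bounded by $\sum 2^{-n}\cdot 2e^\alpha = 2e^\alpha$), which takes care of all integrability/domination issues automatically.

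First I would fix a countable dense set $\{b_k\}_{k\ge1}$ in $(\widehat{B}, d_\star)$, provided by Lemma \ref{l: B separable}. For each $n$, cover $\widehat{B}$ by the balls $B_{d_\star}(b_k, 1/n)$, $k \ge 1$, and disjointify: set $A_1^{(n)} = B_{d_\star}(b_1, 1/n)$ and $A_k^{(n)} = B_{d_\star}(b_k, 1/n) \setminus \bigcup_{j<k} B_{d_\star}(b_j, 1/n)$. These are Borel sets partitioning $\widehat{B}$, with $d_\star(b, b_k) < 1/n$ for every $b \in A_k^{(n)}$. Define $s_n : \widehat{B} \to \widehat{B}$ by $s_n(b) = b_k$ whenever $b \in A_k^{(n)}$. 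Strictly speaking this is a countable (not finite) sum, so I would first truncate: choose $K_n$ large enough that $\mu\big(\bigcup_{k > K_n} A_k^{(n)}\big) \le 1/n$, and redefine $s_n$ to take a fixed value $b_1$ on the tail $\bigcup_{k>K_n} A_k^{(n)}$. Then $s_n = \sum_{k=1}^{K_n} b_k \,\indi{A_k^{(n)}} + b_1 \indi{\{\text{tail}\}}$ is a genuine $\widehat{B}$-valued simple function, and it is $\mu$-integrable because $\widehat{B}$ is $d_\star$-bounded and $\mu$ is finite.

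Next I would estimate $\int_{\widehat{B}} d_\star(f, s_n)\, d\mu = \int_{\widehat{B}} d_\star(\mathrm{id}, s_n)\, d\mu$ (recall $f$ is the identity map here). Split the integral over $\bigcup_{k\le K_n} A_k^{(n)}$ and over the tail. On the non-tail part, for $b \in A_k^{(n)}$ with $k \le K_n$ we have $d_\star(b, s_n(b)) = d_\star(b, b_k) < 1/n$, so that contribution is at most $1/n$. On the tail, $d_\star(b, s_n(b)) = d_\star(b, b_1) \le 2e^\alpha$, and the tail has $\mu$-measure at most $1/n$, so that contribution is at most $2e^\alpha/n$. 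Altogether $\int_{\widehat{B}} d_\star(\mathrm{id}, s_n)\, d\mu \le (1 + 2e^\alpha)/n \to 0$, which is exactly the defining condition for Bochner integrability in Definition \ref{def: Bocnher}. One small point to check for cleanliness: that the $A_k^{(n)}$ are indeed Borel in $(\widehat{B}, d_\star)$ — this is immediate since $d_\star$-balls are $d_\star$-open hence Borel, and finite/countable set operations preserve Borel measurability; and that $s_n$ is measurable, which holds since it is constant on each Borel piece of a countable Borel partition.

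The main obstacle, and the only genuinely nontrivial input, is the separability of $\widehat{B}$ for the weak-$\star$ topology — but this is exactly Lemma \ref{l: B separable}, which I am entitled to assume. Given that, the argument is the textbook proof that every strongly measurable (here: Borel measurable into a separable metric space) function into a metric space, with everything bounded and the measure finite, is Bochner integrable; the bookkeeping with the truncation index $K_n$ is the only thing requiring minor care, and it is needed precisely because a ball in an infinite-dimensional separable space requires countably (not finitely) many small balls to cover it, whereas simple functions must have finite range. I would also remark in passing that, since $\mathcal{E} \subset \widehat{B}$ is Borel (it is a $G_\delta$ by Choquet's theorem, as noted in Section \ref{ss: infinite}), the same conclusion applies verbatim to Radon measures supported on $\mathcal{E}$, which is the form in which the result is invoked in the proof of Proposition \ref{prop : factor ctn}.
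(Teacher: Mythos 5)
Your proof is correct, and it reaches the same conclusion via a genuinely different final step. Both you and the paper build the same disjointified countable Borel partition of $\widehat{B}$ by $d_\star$-balls of radius $1/n$ around the dense sequence from Lemma~\ref{l: B separable}, and both define the simple approximant to take the value $b_k$ on the $k$-th cell. The divergence is in how the countable partition is reduced to a finite one: you truncate at an index $K_n$ chosen so that the tail has $\mu$-measure at most $1/n$, and then pay for it with the boundedness of $d_\star$ (giving the estimate $\int d_\star(\mathrm{id}, s_n)\,d\mu \le (1+2e^\alpha)/n$, which is an $L^1(\mu)$-type convergence). The paper instead invokes the weak-$\star$ \emph{compactness} of $\widehat{B}$ (Banach--Alaoglu plus metrizability, Lemma~\ref{l: B closed}): a cover by open $d_\star$-balls of radius $1/M$ has a finite subcover, so for each $M$ the disjointified partition has only finitely many nonempty cells, the approximant is a simple function with no truncation, and $d_\star(I_d^{(M)}(f), f) \le 1/M$ holds \emph{uniformly} in $f$. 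The paper's route is slightly cleaner (no tail to handle, and a uniform rather than merely $L^1$ estimate), but it needs compactness; your route needs only separability, finiteness of $\mu$, and boundedness of $d_\star$, so it is marginally more general. One small point worth flagging: in your closing remark you suggest the truncation is ``needed precisely because a ball in an infinite-dimensional separable space requires countably many small balls to cover it'' — that is true for the norm topology, but here the relevant topology is weak-$\star$, in which $\widehat{B}$ \emph{is} compact and finitely many small $d_\star$-balls do suffice; this is exactly the shortcut the paper takes.
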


The separability and metrizability of the space \( \widehat{B} \), as established
above
 %in Lemma \ref{l: B metrizable}, 
enable us to obtain the desired pointwise evaluation, as stated in Proposition \ref{p: evaluation} below. Its proof can be consulted in Section \ref{s: proof technical}.

\begin{prop}{\label{p: evaluation}}
	1) The map 
	\begin{align*}
		&\widehat{B}\times \cX \to \R
		\\
		&(f,x) \mapsto e_x(f)
	\end{align*}
	is measurable with respect to the $\sigma$-field $\cB(\widehat{B}) \otimes \cB(\cX)$.
	
	2) Let $\mu$ be a sub-probability on the space $\widehat{B}$ and assume that $b=\int_{\widehat{B}} f \mu(df)$. Then,
	%	Let $B$ be the set in Equation \ref{eq: def B}. Let us consider $\mathcal{E} \subset B$ the set of extreme points of $B$. Then, for any $b \in B$ and 
	for almost every $x \in \cX$, 
	\begin{equation}\label{eq : evalutation rigorous}
		b(x) =e_x(b) =\int_{\widehat{B}} e_x(f) d\mu (f). 
	\end{equation}
	3) The $\mu \otimes dx$ measure of the complement of the following set is zero:
	\begin{equation} \label{eq : neg by Fubini}
		\big\{(f,x)\in\widehat{B}\times\cX : e_x(f)  = \lim_{\epsilon \to 0} \frac{1}{|B(x, \epsilon)|} \int_{B(x, \epsilon)\cap\cX} f(y) \, dy\big\}.
	\end{equation}

\end{prop}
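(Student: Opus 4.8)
The plan is to establish the three assertions in the order 1) $\to$ 3) $\to$ 2), since the joint measurability of Part~1 is what makes the Fubini--Tonelli bookkeeping in the other two parts legitimate. Throughout I would abbreviate $g_\epsilon(f,x):=|B(x,\epsilon)|^{-1}\int_{B(x,\epsilon)\cap\cX} f(y)\,dy=\langle f,\phi^x_\epsilon\rangle_{L^\infty,L^1}$ with $\phi^x_\epsilon:=|B(x,\epsilon)|^{-1}\indi{B(x,\epsilon)\cap\cX}\in L^1(\cX)$, so that $e_x(f)=\limsup_{\epsilon\to0^+}g_\epsilon(f,x)$ by \eqref{eq : def evaluation operator}. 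For Part~1, I would first show that for each fixed $\epsilon>0$ the map $(f,x)\mapsto g_\epsilon(f,x)$ is jointly continuous on $(\widehat{B},d_\star)\times\cX$: this rests on the $L^1$-continuity of $x\mapsto\indi{B(x,\epsilon)\cap\cX}$ (the symmetric difference of two equal-radius balls has volume tending to $0$ as the centres converge, and $|B(x,\epsilon)|$ is a fixed positive constant) together with the $d_\star$-continuity of $f\mapsto\langle f,\psi\rangle$ on $\widehat{B}$ for every $\psi\in L^1(\cX)$ (Lemma~\ref{l: B metrizable}), combined via the uniform bound $|\langle f,\psi\rangle|\le e^\alpha\|\psi\|_{L^1}$ valid on $\widehat{B}$. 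Since $\epsilon\mapsto g_\epsilon(f,x)$ is also continuous on $(0,\infty)$ for fixed $(f,x)$, the $\limsup$ can be restricted to rationals, giving
\[
e_x(f)=\inf_{k\ge1}\ \sup_{\epsilon\in(0,1/k)\cap\Q}g_\epsilon(f,x),
\]
a countable combination of $\cB(\widehat{B})\otimes\cB(\cX)$-measurable functions, hence measurable; the same reasoning applies to $\underline e_x(f):=\liminf_{\epsilon\to0^+}g_\epsilon(f,x)$.

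For Part~3, I would observe that the set in \eqref{eq : neg by Fubini} coincides with $\{(f,x): e_x(f)=\underline e_x(f)\}$ (all quantities lie in $[1,e^\alpha]$), hence is product-measurable by Part~1. For each fixed $f\in\widehat{B}\subset L^\infty(\cX)\subset L^1_{\mathrm{loc}}(\cX)$, the Lebesgue differentiation theorem gives that $\lim_{\epsilon\to0^+}g_\epsilon(f,x)$ exists and equals $f(x)$ for Lebesgue-a.e.\ $x$ (using $B(x,\epsilon)\subset\cX$ for small $\epsilon$ since $\cX$ is open), and at such $x$ also $e_x(f)=\underline e_x(f)=f(x)$. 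Thus every $f$-section of the complement of \eqref{eq : neg by Fubini} is Lebesgue-null, and Tonelli's theorem yields $(\mu\otimes dx)$-measure zero for that complement.

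For Part~2, by Part~3 and Tonelli, for $dx$-a.e.\ $x$ the set $\{f\in\widehat{B}:e_x(f)=\lim_{\epsilon\to0^+}g_\epsilon(f,x)\}$ has full $\mu$-measure; discarding a further null set, I would fix such an $x$ that is moreover a Lebesgue point of $b$, so that $e_x(b)=\lim_{\epsilon\to0^+}g_\epsilon(b,x)=b(x)$. For each $\epsilon>0$ the functional $T^x_\epsilon:g\mapsto\langle g,\phi^x_\epsilon\rangle$ is continuous on $(\widehat{B},d_\star)$ (as $\phi^x_\epsilon\in L^1(\cX)$), and $b=\int_{\widehat{B}}f\,\mu(df)$ is a Bochner integral (Lemma~\ref{l: Bochner}), so \eqref{eq: prop Bochner} gives
\[
g_\epsilon(b,x)=T^x_\epsilon(b)=\int_{\widehat{B}}T^x_\epsilon(f)\,\mu(df)=\int_{\widehat{B}}g_\epsilon(f,x)\,\mu(df).
\]
Letting $\epsilon\to0^+$ along a sequence, the integrands converge to $e_x(f)$ for $\mu$-a.e.\ $f$ and are dominated by the constant $e^\alpha$, which is $\mu$-integrable since $\mu$ is a sub-probability; dominated convergence gives $\int g_\epsilon(f,x)\,\mu(df)\to\int e_x(f)\,\mu(df)$, while the left-hand side tends to $e_x(b)=b(x)$. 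This is precisely \eqref{eq : evalutation rigorous}.

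The main obstacle is Part~1: obtaining joint measurability of $e_x$ forces one to (i) prove the joint continuity of the averaging maps $g_\epsilon$, which uses both the $L^1$-continuity of $x\mapsto\indi{B(x,\epsilon)}$ and the metrizability of the weak-$\star$ topology on the bounded set $\widehat{B}$, and (ii) replace the continuum limit $\epsilon\to0^+$ by a countable one, for which continuity in the parameter $\epsilon$ is needed. Once Part~1 is in hand, Parts~3 and~2 reduce to routine applications of the Lebesgue differentiation theorem, Tonelli's theorem, the Bochner-integral identity \eqref{eq: prop Bochner}, and dominated convergence with the uniform bound $e^\alpha$ supplied by membership in $\widehat{B}$.
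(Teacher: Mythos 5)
Your Parts 1 and 3 follow essentially the paper's proof: joint continuity of the fixed-scale averages $(f,x)\mapsto g_\epsilon(f,x)$ on $(\widehat{B},d_\star)\times\cX$, followed by a countable discretization of the $\limsup$ to get measurability, and then Lebesgue differentiation plus Tonelli for the negligibility in Part 3. The genuine departure is in Part 2. The paper's Steps 2--3 first verify the pointwise identity for the simple-function approximants $I_d^{(M)}(f)$ from Lemma~\ref{l: Bochner}, then pair both sides against a test function $h\in L^1(\cX)\cap L^\infty(\cX)$, interchange integrals by Fubini, pass to the limit $M\to\infty$ via weak-$\star$ convergence and dominated convergence, and conclude the a.e.\ identity by arbitrariness of $h$. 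You instead apply the Bochner-integral commutation \eqref{eq: prop Bochner} not to the discontinuous operator $e_x$ (which the paper explicitly flags as the obstruction) but to the fixed-scale averaging functional $T^x_\epsilon(g)=\langle g,\phi^x_\epsilon\rangle_{L^\infty,L^1}$, which \emph{is} $d_\star$-continuous on $\widehat{B}$ as a pairing with an $L^1$ function; this gives $g_\epsilon(b,x)=\int_{\widehat{B}} g_\epsilon(f,x)\,\mu(df)$ for each $\epsilon>0$, and you then send $\epsilon\to0$ using the Lebesgue point property of $b$ on the left-hand side and dominated convergence on the right (with the uniform bound $e^\alpha$ and the $\mu$-a.e.\ convergence of $g_\epsilon(\cdot,x)$ to $e_x(\cdot)$ supplied by Part 3 via Tonelli). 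The reordering $1\to3\to2$ is precisely what makes that sectional convergence available at the right moment. Both arguments are correct; yours is shorter and reuses the same fixed-scale averaging operators that already drive Part 1, whereas the paper's unwinds the Bochner definition through simple-function approximants and test functions, which is more elementary but requires more bookkeeping.
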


%\begin{prop}{\label{p: evaluation}}
%Let $B$ be the set in Equation \ref{eq: def B}. Let us consider $\mathcal{E} \subset B$ the set of extreme points of $B$. Then, for any $b \in B$ and almost every $x \in \R^d$, 
%$$b(x) = \int_{\mathcal{E}} f(x) d\mu (f).$$
%\end{prop}

\begin{rem}{\label{rk: extension functional space}}
It would be both natural and useful for applications to generalize the results stated for $\mathcal{X} \subset \mathbb{R}^d$ to a broader framework where $\mathcal{X}$ is a topologically complete separable metric space, equipped with a nonatomic, normalized Borel measure $\mu$. This extension is especially relevant for problems involving the statistical analysis of stochastic processes driven by Brownian motion under local differential privacy constraints.

While we leave this question for future work, we conjecture that our main results extend to this more general setting. A key technical ingredient in our proofs involving the evaluation operator is the property that almost every point in $\mathcal{X}$ is a Lebesgue point for functions in a suitable class $B \subset \mathcal{X}$. This is straightforward when $\mathcal{X} = \mathbb{R}^d$ (see Step 1 in the proof of Proposition~\ref{p: evaluation}), but is less immediate in an abstract measurable space.

Fortunately, Theorem 2 in \cite{Oxt} ensures the existence of a homeomorphism between a $G_\delta$ subset of $\mathcal{X}$ (with full $\mu$-measure) and the interval $(0,1)$ with the Lebesgue measure. This suggests that the analysis of the evaluation operator, and in particular of Lebesgue points, can be transferred to $(0,1)$ via this homeomorphism, since it is continuous in both directions. 

We believe this intuition supports the conjectured extension, though several technical challenges remain. A detailed treatment of these issues lies beyond the scope of the present work. %As mentioned, such an extension would be particularly valuable for studying statistical inference for Brownian-driven processes in the Wiener space under privacy constraints, and we intend to pursue this direction in future work.

\end{rem}

\section{Proof of main results}{\label{s: proof main}}

This section is dedicated to the proofs of our main results, specifically the bounds on the Fisher information in both the discrete and continuous settings, as presented in Sections \ref{ss: finite} and \ref{ss: infinite}, respectively.

\subsection{Discrete case: proof of Theorem \ref{th : info cas fini}}

Let us begin by introducing some notation that will be useful throughout the proof.
For each $x \in \mathcal{X}$, define the index sets
\begin{equation}{\label{eq: set Qx+-}}
Q^+_x = \{ \beta \in \{0, \dots, 2^d - 1\} \mid r_\beta(x) = e^\alpha \}, \quad 
Q^-_x = \{ \beta \in \{0, \dots, 2^d - 1\} \mid r_\beta(x) = 1 \}.
\end{equation}
With this notation, we can express the mechanism $q^{(1)}$ as
\begin{equation}{\label{eq: mechanism q1}}
q^{(1)}_x(\beta) = \omega_\beta r_\beta(x) = \omega_\beta \left( e^\alpha \indi{Q^+_x}(\beta) + \indi{Q^-_x}(\beta) \right).
\end{equation}

We are now ready to proceed with the proof of Theorem \ref{th : info cas fini}. The result follows directly from the upper and lower bounds stated in Propositions \ref{prop : borne sup Fisher fini} and \ref{prop : borne inf Fisher fini}, respectively.

We begin with the more involved part—establishing the upper bound—as detailed in Proposition \ref{prop : borne sup Fisher fini}.

\begin{prop}\label{prop : borne sup Fisher fini}
	Assume that $\Theta \subset \R$ and that $(p_{\theta})_\theta$ 
	is DQM at $\theta_0 \in \overset{\circ}{\Theta}$ with score function $s_{\theta_0}$. Let $q : \cX \to \cZ$ be any $\alpha$-LDP mechanism taking values in a finite set $\cZ$
	and $(\widetilde{p}_\theta)_\theta$ the model for the public variable.
	 Assume that $s_{\theta_0}(x)\neq 0$ for all $x\in \cX$.
	Then, there exists $\overline{\alpha}$ such that if $\alpha \le \overline{\alpha}$,
	\begin{equation} \label{eq :  uppper bound Fisher finite exacte}
		\cI_{\theta_0}((\widetilde{p}_\theta)_\theta)=\E\left[\abs*{t_{\theta_0}(Z)}^2\right] \le 
		\cI^{\text{max},\alpha}_{\theta_0}((p_\theta)_\theta),
	\end{equation}
	where we recall that
	\begin{equation*}
\cI^{\text{max},\alpha}_{\theta_0}((p_\theta)_\theta)
=\frac{(e^\alpha-1)^2}{4}% 
\times \frac{\E\left[\abs{s_{\theta_0}(X)}\right]^2}{\left[ (1-n_\text{max})+e^\alpha n_\text{max}  \right]
\left[n_\text{max}+(1-n_\text{max})e^\alpha  \right]}
	\end{equation*}
	and $n_\text{max}=\int_{x: s_{\theta_0}(x)>0} p_{\theta_0}(x) \mu(dx)$.
	  The value of $\overline{\alpha}$ is independent of the channel $q$, but depends on the model $(p_{\theta})_\theta$.
\end{prop}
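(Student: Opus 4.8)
\emph{Overview of the plan.} The idea is to use the factorization lemma to replace the arbitrary $\alpha$-LDP channel $q$ by an extremal (staircase) one, to recast the Fisher information of such a channel as the objective of the linear program \eqref{eq: opt 14.75}, and then to identify its value $M^\star$; the restriction $\alpha\le\overline\alpha$ will be needed only at the very last step.

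\emph{Step 1: reduction to a linear program.} By Lemma~\ref{lem : factorisation cas fini} I would write $q=q^{(2)}\circ q^{(1)}$ with $q^{(1)}_x(\beta)=\omega_\beta r_\beta(x)$ extremal and $\omega$ a sub-probability on $\cE$ satisfying $R\omega=\mathbf 1$, so that by \eqref{eq: fact on Fisher} it suffices to bound $\cI_{\theta_0}(q^{(1)}\circ\cP)$. Inserting $q^{(1)}$ into the score formula \eqref{eq : score public}, using $r_\beta=1+(e^\alpha-1)\indi{F^+_\beta}$ together with $\E_{\theta_0}[s_{\theta_0}]=0$ from \eqref{eq: score centered}, gives $\widetilde p_{\theta_0}(\beta)=\omega_\beta D_\beta$ and $t_{\theta_0}(\beta)=(e^\alpha-1)S_\beta/D_\beta$, where $S_\beta:=\int_\cX s_{\theta_0}(x)\indi{F^+_\beta}(x)p_{\theta_0}(x)\,\mu(dx)$ and $D_\beta:=1+(e^\alpha-1)\int_{F^+_\beta}p_{\theta_0}(x)\,\mu(dx)$, whence
\[
\cI_{\theta_0}(q^{(1)}\circ\cP)=\sum_\beta\widetilde p_{\theta_0}(\beta)\,t_{\theta_0}(\beta)^2=(e^\alpha-1)^2\sum_\beta\omega_\beta\,i_\beta
\]
with $i_\beta$ as in \eqref{eq : def i beta}. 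Since $R\omega=\mathbf 1$ already forces $\sum_\beta\omega_\beta\le1$ by \eqref{eq: mass omega beta}, the feasible set of \eqref{eq: opt 14.75} is a nonempty bounded polytope, so $M^\star$ is attained, and everything reduces to computing it.

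\emph{Step 2: a feasible candidate and a crude bound.} Let $F^+_{\max}:=\{s_{\theta_0}>0\}$, $F^{\prime+}_{\max}:=\{s_{\theta_0}<0\}$, a partition of $\cX$ by hypothesis. By the elementary estimate of Remark~\ref{rk: choice Fmax}, $\abs{S_\beta}\le\tfrac12\E[\abs{s_{\theta_0}(X)}]$, with equality iff $F^+_\beta\in\{F^+_{\max},F^{\prime+}_{\max}\}$; call $\beta_{\max},\beta_{\max}'$ the corresponding indices. As $F^+_{\max}$ and $F^{\prime+}_{\max}$ are complementary, evaluating $R\omega=\mathbf 1$ on each of them shows that $\omega=\tfrac1{1+e^\alpha}(\delta_{\beta_{\max}}+\delta_{\beta_{\max}'})$ is feasible, and with $a:=(1-n_\text{max})+e^\alpha n_\text{max}$, $b:=n_\text{max}+(1-n_\text{max})e^\alpha$ (so $a+b=1+e^\alpha$) one computes
\[
\sum_\beta\omega_\beta i_\beta=\frac{\E[\abs{s_{\theta_0}(X)}]^2}{4(1+e^\alpha)}\pa{\frac1a+\frac1b}=\frac{\E[\abs{s_{\theta_0}(X)}]^2}{4ab}=\frac{\cI^{\text{max},\alpha}_{\theta_0}((p_\theta)_\theta)}{(e^\alpha-1)^2},
\]
which yields $M^\star\ge\cI^{\text{max},\alpha}_{\theta_0}((p_\theta)_\theta)/(e^\alpha-1)^2$. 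Conversely, $i_\beta=S_\beta^2/D_\beta\le\tfrac14\E[\abs{s_{\theta_0}(X)}]^2$ (using $D_\beta\ge1$) and $\sum_\beta\omega_\beta\le1$ give the crude, $\overline\alpha$-free bound $M^\star\le\tfrac14\E[\abs{s_{\theta_0}(X)}]^2$, which already matches the target as $\alpha\to0$ since $ab\to1$. The real content is therefore the reverse of the first inequality, $M^\star\le\E[\abs{s_{\theta_0}(X)}]^2/(4ab)$, for $\alpha$ small.

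\emph{Step 3: optimality of the candidate (the main obstacle), and conclusion.} To prove $M^\star\le\E[\abs{s_{\theta_0}(X)}]^2/(4ab)$ I would pass to the dual of \eqref{eq: opt 14.75}: find $y\in\R^d$ minimizing $\sum_j y_j$ subject to $\sum_j y_j+(e^\alpha-1)\sum_{x_j\in F}y_j\ge i_F$ for every subset $F\subseteq\cX$, where $i_F$ denotes $i_\beta$ for the index with $F^+_\beta=F$. Complementary slackness with the candidate of Step~2 forces the constraints at $F=F^+_{\max}$ and $F=F^{\prime+}_{\max}$ to be tight, which pins $\sum_j y_j=\E[\abs{s_{\theta_0}(X)}]^2/(4ab)$ and the two block sums $\sum_{x_j\in F^+_{\max}}y_j$, $\sum_{x_j\in F^{\prime+}_{\max}}y_j$; one then exhibits an explicit block-dependent choice of the individual $y_j$ realising these block sums and verifies the dual feasibility inequality for all $2^d$ subsets $F$, after which weak duality gives $\sum_\beta\omega_\beta i_\beta\le\sum_j y_j$ for every feasible $\omega$. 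Combining with Steps~1--2, for $\alpha\le\overline\alpha$ one gets $\cI_{\theta_0}((\widetilde p_\theta)_\theta)\le(e^\alpha-1)^2 M^\star=\cI^{\text{max},\alpha}_{\theta_0}((p_\theta)_\theta)$. The hard part is precisely that verification: the dual inequality holds with equality only at $F^+_{\max},F^{\prime+}_{\max}$ when $\alpha=0$ and deteriorates continuously in $\alpha$, the margin to be maintained being governed by a model-dependent quantity (essentially the concave frontier $m\mapsto\sup\{\int_A s_{\theta_0}p_{\theta_0}:\ A\subseteq F^+_{\max},\ \int_A p_{\theta_0}=m\}$ and its analogue on $F^{\prime+}_{\max}$); this is exactly what produces $\overline\alpha$, which thus depends on $(p_\theta)_\theta$ but not on $q$. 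That some restriction on $\alpha$ is genuinely needed is visible from the fact that when $s_{\theta_0}$ is sharply concentrated the two-point candidate can be beaten for large $\alpha$ (for instance by a suitable three-point measure).
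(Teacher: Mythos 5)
Steps 1 and 2 of your plan are correct and essentially coincide with the paper's proof of the proposition: reduce to an extremal mechanism via Lemma~\ref{lem : factorisation cas fini} and the information-decrease inequality~\eqref{eq: fact on Fisher}, compute $\widetilde p_{\theta_0}(\beta)=\omega_\beta D_\beta$ and $t_{\theta_0}(\beta)=(e^\alpha-1)S_\beta/D_\beta$, and recast the problem as the linear program~\eqref{eq: opt 14.75}; then observe via Remark~\ref{rk: choice Fmax} that $\abs{S_\beta}$ is maximized exactly at $F^+_\beta\in\{F^+_{\max},F^{\prime+}_{\max}\}$, verify that $\omega=\tfrac1{1+e^\alpha}(\delta_{\beta_{\max}}+\delta_{\beta_{\max}'})$ is feasible, and compute the value $\E[\abs{s_{\theta_0}}]^2/(4ab)$ for this candidate. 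These computations agree with the paper's.

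Your Step 3, however, departs from the paper and is not actually carried out. The paper proves $M^\star\le\E[\abs{s_{\theta_0}}]^2/(4ab)$ by a primal argument (Lemma~\ref{lem :  determination max fini}): it analyzes the support size $q$ of a maximizer $\overline\omega$, dispatches $q=1$, solves $q=2$ explicitly, and rules out $q\ge3$ for $\alpha\le\overline\alpha$ through explicit manipulation of the $2\times2$ subsystems of $R\overline\omega=\mathbf1$ (split into four subcases, with the auxiliary combinatorial Lemma~\ref{lem : for subcase 4 finite cond on F}). You instead propose to exhibit a dual certificate $y\in\R^d$ for the LP. The dual you write down is correct (the $\beta$-th constraint is indeed $\sum_j y_j+(e^\alpha-1)\sum_{x_j\in F^+_\beta}y_j\ge i_\beta$), and complementary slackness does pin down $\sum_j y_j$ and the two block sums $\sum_{x_j\in F^+_{\max}}y_j$, $\sum_{x_j\in F^{\prime+}_{\max}}y_j$. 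But the claim that ``one then exhibits an explicit block-dependent choice of the individual $y_j$ ... and verifies the dual feasibility inequality for all $2^d$ subsets $F$'' is precisely the content of the proposition, and you leave it entirely unproved. In fact it is not at all evident that a block-constant (or any simple ``block-dependent'') $y$ is dual feasible: $\sum_{x_j\in F}y_j$ would then depend only on the cardinalities $\abs{F\cap F^+_{\max}}$ and $\abs{F\cap F^{\prime+}_{\max}}$, whereas $i_F$ depends on the actual masses $\int_F p_{\theta_0}$ and the score integrals $\int_F s_{\theta_0}p_{\theta_0}$; so the dominance $R^T y\ge i$ requires a genuinely model-adapted choice of $y$ (or a different argument). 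You acknowledge this (``the hard part is precisely that verification''), but acknowledging the gap is not the same as closing it. As written, the proof is incomplete at its central step; either carry out the dual-feasibility verification, or fall back on the paper's primal case analysis on $q$.
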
	

\begin{proof}
As \( q \in \cQ_\alpha \), we use the factorisation Lemma~\ref{lem : factorisation cas fini} to write \( q = q^{(2)} \circ q^{(1)} \), where \( q^{(1)} \) is an extremal \(\alpha\)-LDP channel described in the statement of Lemma~\ref{lem : factorisation cas fini}. From \cite{steinbergerEfficiencyLocalDifferent}, and as recalled in~\eqref{eq : decroissance Fisher}, we have
\[
\cI_{\theta_0}\big((\widetilde{p}_\theta)_\theta\big)
= \cI_{\theta_0}(q \circ \cP)
= \cI_{\theta_0}\big((q^{(2)} \circ q^{(1)}) \circ \cP\big)
= \cI_{\theta_0}\big(q^{(2)} \circ (q^{(1)} \circ \cP)\big)
\le \cI_{\theta_0}(q^{(1)} \circ \cP).
\]
Hence, it is sufficient to prove~\eqref{eq :  uppper bound Fisher finite exacte} for \( q = q^{(1)} \). To lighten notation, we will denote this quantity as \( q \) in place of \( q^{(1)} \) in the remainder of the proof. Accordingly, we consider the extremal mechanism \( q : \cX \to \cE = \{0,\dots,2^d-1\} \), with probability coefficients
\begin{equation} \label{eq: qx beta 7.25}
q_x(\beta) = \omega_\beta r_\beta(x),
\end{equation}
as in Lemma~\ref{lem : factorisation cas fini}. Recall from~\eqref{eq: mass omega beta} and~\eqref{eq: r beta x} that
\[
\sum_{\beta=0}^{2^d-1} \omega_\beta \in [e^{-\alpha}, 1],
\quad \text{and} \quad
r_\beta(x) = e^{\alpha} \indi{F^+_\beta}(x) + \indi{F^-_\beta}(x) = 1 + (e^\alpha - 1)\indi{F^+_\beta}(x).
\]
In particular, the expression for the probability \( \widetilde{p}_\theta \) on \( \cE \) is given by
\[
\widetilde{p}_\theta(\beta)
= \int_\cX p_\theta(x) q_x(\beta) \, \mu(dx)
= \int_\cX p_\theta(x) \omega_\beta r_\beta(x) \, \mu(dx)
= \omega_\beta \int_\cX p_\theta(x) \left[1 + (e^\alpha - 1)\indi{F^+_\beta}(x)\right] \mu(dx),
\]
where in the last equality we replaced \( r_\beta(x) \) using~\eqref{eq: r beta x}. Since \( p_\theta \) is a probability density, we have \( \int_\cX p_\theta(x) \mu(dx) = 1 \), so that
\begin{equation} \label{eq: ptilde 7.5}
\widetilde{p}_\theta(\beta) = \omega_\beta \left(1 + (e^\alpha - 1) n^+_\beta\right),
\end{equation}
with
\begin{equation} \label{eq : def nplus}
n^+_\beta = \int_{F^+_\beta} p_\theta(x) \mu(dx).
\end{equation}

Recall that, according to~\cite{steinbergerEfficiencyLocalDifferent} and as stated in~\eqref{eq : score public}, the score function of the model \( q \circ \cP \) is given by
\[
t_\theta(\beta) = \frac{\int_\cX s_\theta(x) q_x(\beta) p_\theta(x) \mu(dx)}{\widetilde{p}_\theta(\beta)}.
\]
We already obtained an expression for the denominator \( \widetilde{p}_\theta(\beta) \) in~\eqref{eq: ptilde 7.5}. For the numerator, using~\eqref{eq: qx beta 7.25} and the definition of \( r_\beta(x) \), we get
\[
\int_\cX s_\theta(x) q_x(\beta) p_\theta(x) \mu(dx)
= \int_\cX s_\theta(x) \omega_\beta r_\beta(x) p_\theta(x) \mu(dx)
= \omega_\beta \int_\cX s_\theta(x) \left[1 + (e^\alpha - 1)\indi{F^+_\beta}(x)\right] p_\theta(x) \mu(dx).
\]
Since the score function \( s_\theta \) of \( \mathcal{P} \) is centered, we have \( \int_\cX s_\theta(x) p_\theta(x) \mu(dx) = 0 \). Thus,
\[
\int_\cX s_\theta(x) q_x(\beta) p_\theta(x) \mu(dx)
= (e^\alpha - 1) \omega_\beta \int_\cX s_\theta(x) \indi{F^+_\beta}(x) p_\theta(x) \mu(dx).
\]
This yields
\begin{equation} \label{eq: t beta 8.5}
t_\theta(\beta) = (e^\alpha - 1)
\frac{\int_\cX s_\theta(x) \indi{F^+_\beta}(x) p_\theta(x) \mu(dx)}{1 + (e^\alpha - 1) n^+_\beta},
\end{equation}
for all \( \beta \) such that \( \omega_\beta > 0 \). Substituting~\eqref{eq: ptilde 7.5} and~\eqref{eq: t beta 8.5}, we obtain the following expression for the Fisher information of \( q \circ \cP \):
\begin{align*}
\cI_\theta(q \circ \cP)
&= \mathbb{E}[t_\theta(Y)^2]
= \int_\mathcal{E} \widetilde{p}_\theta(\beta) \, t_\theta(\beta)^2 \, \nu(d\beta) \\
&= (e^\alpha - 1)^2 \int_\mathcal{E} \omega_\beta
\frac{\left(\int_\cX s_\theta(x) \indi{F^+_\beta}(x) p_\theta(x) \mu(dx)\right)^2}{1 + (e^\alpha - 1) n^+_\beta} \, \nu(d\beta),
\end{align*}
where \( \nu \) is the counting measure on \( \mathcal{E} = \{0, \dots, 2^d - 1\} \). Let us recall
$$i_\beta := \frac{\left(\int_\cX s_\theta(x) \mathbf{1}_{F^+_\beta}(x) p_\theta(x) \mu(dx)\right)^2}{1 + (e^\alpha - 1) n^+_\beta},$$
and consider the maximization problem:
\[
M^* = \sup_{\omega} \int_\mathcal{E} \omega_\beta i_\beta \, \nu(d\beta)
\quad \text{subject to} \quad
\int_\mathcal{E} \omega_\beta r_\beta(x) \, \nu(d\beta) = 1, \quad \forall x \in \cX,
\]
where the supremum is over sub-probability weights \( (\omega_\beta)_{\beta = 0, \dots, 2^d - 1} \) on \( \{0, \dots, 2^d - 1\} \). The constraint arises from~\eqref{eq: constraint factorisation 4.55}. We conclude that
\[
\cI_\theta(q \circ \cP) \le (e^\alpha - 1)^2 M^*.
\]
Thus, the proof of Proposition~\ref{prop : borne sup Fisher fini} is complete once we invoke Lemma~\ref{lem :  determination max fini} below, whose proof can be found in Section \ref{s: proof technical}.

\end{proof}

\begin{lem}\label{lem :  determination max fini}
Under the assumptions of Proposition~\ref{prop : borne sup Fisher fini}, consider the maximization problem
\begin{equation} \label{eq : optimization pb finite}
	M^* = \sup_{\omega \in (\R_+)^{2^d}} \int_{\mathcal{E}} \omega_\beta i_\beta \, \nu(d\beta) \quad
	\text{subject to} \quad \int_{\mathcal{E}} \omega_\beta r_\beta(x) \, \nu(d\beta) = 1, \quad \forall x \in \cX.
\end{equation}
Then, there exists $\overline{\alpha} > 0$ such that for all $\alpha \leq \overline{\alpha}$,
\begin{equation} \label{eq : def M star}
	M^* = \frac{1}{4} \cdot \frac{\E\left[ |s_{\theta_0}(X)| \right]^2}{\left[(1 - n_{\max}) + e^\alpha n_{\max} \right] \left[ n_{\max} + (1 - n_{\max}) e^\alpha \right]},
\end{equation}
where $n_{\max}$ is as defined in~\eqref{eq: def nmax} in the statement of Theorem~1. Moreover, the maximum is attained for a vector $\overline{\omega}$ with only two non-zero components.
\end{lem}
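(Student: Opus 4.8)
The plan is to analyze the linear program \eqref{eq : optimization pb finite} by first massively reducing its effective dimension using the structural fact from Remark \ref{rk: choice Fmax}: the coefficient $i_\beta$ is built from the set $F_\beta^+$, and its numerator $\bigl(\int_\cX s_\theta(x)\indi{F_\beta^+}(x)p_\theta(x)\mu(dx)\bigr)^2$ is maximized — over all subsets of $\cX$ — exactly when $F_\beta^+ = F_{\max}^+ := \{s_{\theta_0}>0\}$ or $F_\beta^+ = F_{\max}^{\prime +}:=\{s_{\theta_0}<0\}$, in which case the numerator equals $\tfrac14\E[|s_{\theta_0}(X)|]^2$. Denote by $\beta_{\max}$ and $\beta'_{\max}$ the corresponding indices, with masses $n_{\max}$ and $1-n_{\max}$ respectively, so that $i_{\beta_{\max}} = \tfrac14\E[|s_{\theta_0}|]^2/(1+(e^\alpha-1)n_{\max})$ and $i_{\beta'_{\max}} = \tfrac14\E[|s_{\theta_0}|]^2/(1+(e^\alpha-1)(1-n_{\max}))$. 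First I would show that the two-point sub-probability $\overline{\omega}$ supported on $\{\beta_{\max},\beta'_{\max}\}$ is \emph{feasible} for the constraint $\sum_\beta \omega_\beta r_\beta(x)=1$: since $F^+_{\beta_{\max}}$ and $F^+_{\beta'_{\max}}$ partition $\cX$, for every $x$ exactly one of $r_{\beta_{\max}}(x),r_{\beta'_{\max}}(x)$ equals $e^\alpha$ and the other equals $1$, so the constraint reduces to two scalar equations $\overline\omega_{\beta_{\max}} e^\alpha + \overline\omega_{\beta'_{\max}} = 1$ and $\overline\omega_{\beta_{\max}} + \overline\omega_{\beta'_{\max}} e^\alpha = 1$, which has the explicit positive solution $\overline\omega_{\beta_{\max}} = \overline\omega_{\beta'_{\max}} = 1/(1+e^\alpha)$. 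Plugging this $\overline\omega$ into the objective gives, after simplification, precisely the right-hand side of \eqref{eq : def M star}; this settles the lower bound $M^* \ge (\text{RHS})$.

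The harder half is the matching upper bound: I must show that no feasible $\omega$ does better than $\overline\omega$, at least once $\alpha\le\overline\alpha$. The natural route is LP duality. The dual of \eqref{eq : optimization pb finite} (noting the constraint is really finitely many equations, one per $x\in\cX$, $|\cX|=d$) seeks $\lambda=(\lambda_x)_{x\in\cX}$ minimizing $\sum_x \lambda_x$ subject to $\sum_{x} \lambda_x r_\beta(x) \ge i_\beta$ for every $\beta\in\cE$. Complementary slackness with the candidate primal $\overline\omega$ forces the dual constraints to be tight at $\beta_{\max}$ and $\beta'_{\max}$, i.e. $\sum_{x\in F^+_{\max}}\lambda_x e^\alpha + \sum_{x\in F^{\prime+}_{\max}}\lambda_x = i_{\beta_{\max}}$ and the symmetric equation at $\beta'_{\max}$; this is two linear equations in the two aggregate unknowns $A:=\sum_{x\in F^+_{\max}}\lambda_x$, $A':=\sum_{x\in F^{\prime+}_{\max}}\lambda_x$, which I solve explicitly. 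One then checks $\sum_x\lambda_x = A+A'$ equals the claimed $M^*$ (so weak duality will close the gap), and — the real work — that this candidate dual vector is \emph{feasible}, i.e. $\lambda_x e^\alpha\,\indi{F_\beta^+}(x)+\lambda_x\indi{F_\beta^-}(x)$ summed over $x$ dominates $i_\beta$ for \emph{every} $\beta$. Here is where $\alpha\le\overline\alpha$ enters: for $\alpha$ small, $i_\beta$ is of order $\alpha^0$ with the sharp constant attained only at $\beta_{\max},\beta'_{\max}$, and the dual slack at every other $\beta$ is bounded below by a strictly positive quantity plus terms that are $O(\alpha)$ uniformly in $\beta$ and in the channel; choosing $\overline\alpha$ small enough (depending only on the model $(p_\theta)_\theta$, via the gap between $\tfrac14\E[|s_{\theta_0}|]^2$ and the next-best value of the numerator over the finitely many subsets $F_\beta^+$) keeps all slacks nonnegative. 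A point to be careful about: some $\lambda_x$ may need to be allocated within $F^+_{\max}$ and $F^{\prime+}_{\max}$ individually (not just their sums) to verify feasibility at the $\beta$'s whose $F_\beta^+$ straddles the two classes; I would either argue the worst case is captured by the aggregates by a convexity/averaging argument, or simply spread $\lambda$ uniformly, e.g. $\lambda_x = A\, p_{\theta_0}(x)/n_{\max}$ on $F^+_{\max}$ and analogously on $F^{\prime+}_{\max}$, and bound the resulting expression.

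The main obstacle I anticipate is exactly this last verification — establishing dual feasibility uniformly over all $2^d$ patterns $\beta$ while extracting an explicit, channel-independent threshold $\overline\alpha$. The clean way to organize it is: (i) write $i_\beta = (e^\alpha-1)^0$-normalized form and Taylor-expand the denominator $1+(e^\alpha-1)n_\beta^+ = 1 + O(\alpha)$; (ii) use the strict inequality from Remark \ref{rk: choice Fmax}, namely $\bigl(\int s_\theta \indi{F_\beta^+} p_\theta\bigr)^2 \le \tfrac14\E[|s_\theta|]^2$ with a quantifiable gap $\delta_0>0$ for all $\beta\notin\{\beta_{\max},\beta'_{\max}\}$ (finiteness of $\cE$ makes the gap positive); (iii) show the dual slack at such $\beta$ is $\ge \delta_0 - C\alpha$ for an explicit $C$ depending on $\E[|s_{\theta_0}|]$ and nothing else, hence positive for $\alpha \le \overline\alpha := \delta_0/C$. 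Combining the feasible primal $\overline\omega$, the feasible dual $\lambda$, and the equality of their objective values yields $M^* = \sum_x\lambda_x = \overline\omega^\top i = \text{RHS of }\eqref{eq : def M star}$ by strong duality (or directly by the sandwich $\overline\omega^\top i \le M^* \le \sum_x \lambda_x$), and the optimal $\overline\omega$ has exactly two nonzero components by construction. Finally, I note the threshold $\overline\alpha$ depends only on $\delta_0$ and $\E[|s_{\theta_0}|]$, hence on the model and not on the channel $q$, as claimed.
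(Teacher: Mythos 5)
Your proposal is correct, but it takes a genuinely different route from the paper's proof. The paper argues directly on the primal LP: it first shows, from the denominator bounds $e^{-\alpha}\le (1+(e^\alpha-1)n_\beta^+)^{-1}\le 1$ and the finite (hence quantifiable) gap between the top and third-best values of the squared numerator over subsets of $\cX$, that for $\alpha$ small the map $\beta\mapsto i_\beta$ peaks exactly at $\beta_{\max}$ and $\beta'_{\max}$; it then classifies optimal $\overline\omega$ by the size $q$ of their support, shows $q=1$ gives $M^*=0$, shows $q=2$ yields precisely the claimed value after solving a $2\times2$ constraint system, and finally rules out $q\ge3$ via a fairly intricate four-subcase analysis extracting $2\times2$ subsystems from $R\overline\omega=\mathbf 1$ and bounding the objective strictly below the $q=2$ value. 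Your LP-duality route is more economical: once the explicit two-point primal $\overline\omega=\tfrac{1}{1+e^\alpha}(\delta_{\beta_{\max}}+\delta_{\beta'_{\max}})$ is shown feasible with objective value equal to the RHS, the upper bound reduces to exhibiting a feasible dual $\lambda$ with matching objective. In fact, the obstacle you flag — allocating $\lambda$ inside $F^+_{\max}$ and $F'^{\prime+}_{\max}$ individually — is not a real issue: after checking that the aggregates $A$ and $A'$ determined by complementary slackness are nonnegative (which holds because $e^\alpha i_{\beta'_{\max}}\ge i_{\beta_{\max}}$ whenever $n_{\max}\le\tfrac12$, WLOG), any nonnegative spread gives $L_\beta:=\sum_{x\in F_\beta^+}\lambda_x\ge0$, so the dual slack at every $\beta\notin\{\beta_{\max},\beta'_{\max}\}$ is at least $L-i_\beta\ge L-i_{(3)}$, which is uniformly positive for $\alpha\le\overline\alpha$ by the numerator-gap argument. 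Thus your approach replaces the paper's $q\ge 3$ subcase gymnastics with a single clean slack estimate, at the cost of invoking LP duality machinery that the paper avoids.
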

	
This concludes the part of the proof dedicated to the upper bound in Theorem~\ref{th : info cas fini}.  
We now proceed with the proof of the lower bound, as stated in Proposition~\ref{prop : borne inf Fisher fini} below.  
Note that this proof heavily relies on the notation and arguments already introduced in the upper bound analysis.

\begin{prop}\label{prop : borne inf Fisher fini}
	Assume that $\Theta \subset \R$ and that $\cP=(p_{\theta})_\theta$ 
	is DQM at $\theta_0 \in \overset{\circ}{\Theta}$ with score function $s_{\theta_0}$.
	Then for all $\alpha>0$, there exists $q \in \cQ_\alpha$, such that $\cI_{\theta_0}(q\circ \cP)=\cI_{\theta_0}^{\text{max},\alpha}$.
\end{prop}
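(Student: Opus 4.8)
The plan is to exhibit the optimal mechanism explicitly --- it is the binary randomized‑response channel that reveals only the sign of the score --- and then to verify, by a short computation reusing the Fisher‑information formula for extremal mechanisms derived in the proof of Proposition~\ref{prop : borne sup Fisher fini}, that it attains $\cI^{\text{max},\alpha}_{\theta_0}$. Concretely, I would set $F^+ := \{x \in \cX : s_{\theta_0}(x) > 0\}$ and define the channel $q$ from $\cX$ to $\cZ = \{0,1\}$ by
\[
q_x(1) = \frac{1 + (e^\alpha - 1)\,\indi{F^+}(x)}{1 + e^\alpha}, \qquad q_x(0) = \frac{1 + (e^\alpha - 1)\,\indi{(F^+)^c}(x)}{1 + e^\alpha}.
\]
Since $\indi{F^+} + \indi{(F^+)^c} = 1$ we have $q_x(0) + q_x(1) = 1$, so $q$ is a Markov kernel, and $q_x(z)/q_{x'}(z) \in \{e^{-\alpha}, 1, e^\alpha\}$ for all $x, x', z$, so $q \in \cQ_\alpha$. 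This is precisely the extremal mechanism $q^{(1)}$ of Lemma~\ref{lem : factorisation cas fini} associated with the sub‑probability $\omega$ placing mass $\tfrac{1}{1+e^\alpha}$ on each of the two staircase patterns with positive level sets $F^+$ and $(F^+)^c$; when $s_{\theta_0}$ never vanishes these are the patterns $r_{\beta_{\max}}, r_{\beta_{\max}'}$ singled out in Remark~\ref{rk: choice Fmax}.

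Next I would compute $\cI_{\theta_0}(q \circ \cP)$. The only analytic input is that the score is centered, $\E_{\theta_0}[s_{\theta_0}(X)] = 0$ (Theorem~7.2 of \cite{Van07}, recalled in~\eqref{eq: score centered}), which gives
\[
\int_{F^+} s_{\theta_0}(x)\, p_{\theta_0}(x)\,\mu(dx) = \tfrac12\,\E\big[\abs{s_{\theta_0}(X)}\big] = -\int_{(F^+)^c} s_{\theta_0}(x)\, p_{\theta_0}(x)\,\mu(dx),
\]
while $\int_{F^+} p_{\theta_0}\,d\mu = n_\text{max}$ and $\int_{(F^+)^c} p_{\theta_0}\,d\mu = 1 - n_\text{max}$. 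Writing $a := (1-n_\text{max}) + e^\alpha n_\text{max}$ and $b := n_\text{max} + (1-n_\text{max})e^\alpha$, the expressions~\eqref{eq : score public}, \eqref{eq: ptilde 7.5}, \eqref{eq: t beta 8.5} (equivalently, the identity $\cI_{\theta_0}(q^{(1)} \circ \cP) = (e^\alpha-1)^2 \sum_\beta \omega_\beta i_\beta$ with $i_\beta$ as in~\eqref{eq : def i beta}, so that $i_{\beta_{\max}} = \tfrac14 \E[\abs{s_{\theta_0}(X)}]^2/a$ and $i_{\beta_{\max}'} = \tfrac14 \E[\abs{s_{\theta_0}(X)}]^2/b$) yield $\widetilde p_{\theta_0}(1) = a/(1+e^\alpha)$, $\widetilde p_{\theta_0}(0) = b/(1+e^\alpha)$ and $t_{\theta_0}(1) = \tfrac{(e^\alpha-1)\,\E[\abs{s_{\theta_0}(X)}]}{2a}$, $t_{\theta_0}(0) = -\tfrac{(e^\alpha-1)\,\E[\abs{s_{\theta_0}(X)}]}{2b}$.

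Since $a + b = 1 + e^\alpha$, combining these would give
\[
\cI_{\theta_0}(q \circ \cP) = \widetilde p_{\theta_0}(1)\, t_{\theta_0}(1)^2 + \widetilde p_{\theta_0}(0)\, t_{\theta_0}(0)^2 = \frac{(e^\alpha-1)^2\,\E[\abs{s_{\theta_0}(X)}]^2}{4(1+e^\alpha)}\Big(\frac1a + \frac1b\Big) = \frac{(e^\alpha-1)^2}{4}\cdot\frac{\E[\abs{s_{\theta_0}(X)}]^2}{ab},
\]
which is exactly $\cI^{\text{max},\alpha}_{\theta_0}$ as in~\eqref{eq : def I max caf fini}. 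I do not expect a genuinely hard step: the proof is a direct verification. The points to watch are (i) the bin $F^+$ must be taken to be \emph{exactly} $\{s_{\theta_0} > 0\}$, so that its complement carries mass $1 - n_\text{max}$ and the two denominators come out as $a$ and $b$ --- any other placement of a zero‑score set leaves the numerators unchanged but alters $ab$; and (ii) in contrast to the upper bound of Proposition~\ref{prop : borne sup Fisher fini}, no restriction $\alpha \le \overline{\alpha}$ is needed here, since the displayed identity for $\cI_{\theta_0}(q\circ\cP)$ is exact for every $\alpha > 0$ (the bound $\overline{\alpha}$ is only used, via Proposition~\ref{prop : borne sup Fisher fini}, to make this value the \emph{maximum}). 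Finally I would dispose of the degenerate case $n_\text{max} \in \{0,1\}$: then the centering forces $s_{\theta_0} = 0$ $P_{\theta_0}$‑a.s., hence $\cI^{\text{max},\alpha}_{\theta_0} = 0$ and $\cI_{\theta_0}(q \circ \cP) \le \cI_{\theta_0}(\cP) = 0$ by~\eqref{eq : decroissance Fisher}, so equality holds trivially.
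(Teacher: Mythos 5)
Your proposal is correct and follows essentially the same route as the paper: both construct the two-output extremal channel based on $F^+ = \{s_{\theta_0} > 0\}$ and verify by direct computation (using the centering of the score and the formulas for $\widetilde p_{\theta_0}$ and $t_{\theta_0}$) that it attains $\cI^{\text{max},\alpha}_{\theta_0}$. Your $a,b$ notation is a mild cosmetic streamlining, and your remark on the degenerate case $n_\text{max}\in\{0,1\}$ is a small but sound extra since this proposition, unlike Proposition~\ref{prop : borne sup Fisher fini}, does not assume $s_{\theta_0}\neq 0$ everywhere.
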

\begin{proof}
The randomization $q$ achieving the maximum is the one discussed in Remark \ref{rk: choice Fmax} and used in the proof of Lemma \ref{lem :  determination max fini},  where the maximization problem \eqref{eq : optimization pb finite} is solved.
%Proposition~\ref{prop : borne sup Fisher fini}, where the maximization problem \eqref{lem :  determination max fini} is solved. 
For the sake of completeness, we detail it here using simplified notation.

Since the support of the optimal measure $\overline{\omega}$ in Lemma~\ref{lem :  determination max fini} has cardinality $2$, we let $\cZ = \{z_1, z_2\}$. Define
\[
F^+ := \{x \in \cX \mid s_{\theta_0}(x) > 0\}.
\]
We now define the randomization:
\begin{equation}\label{eq: optimal q 49.5}
q_x(z) = P(Z = z \mid X = x) =
\begin{cases}
\displaystyle \frac{1}{1 + e^\alpha} \left( e^\alpha \indi{z_1}(z) + \indi{z_2}(z) \right) & \text{if } x \in F^+, \\
\displaystyle \frac{1}{1 + e^\alpha} \left( \indi{z_1}(z) + e^\alpha \indi{z_2}(z) \right) & \text{if } x \notin F^+.
\end{cases}
\end{equation}
After computation, for any $\theta \in \Theta$, the marginal probability of $Z$ is given by
\begin{align*}
\widetilde{p}_{\theta}(z_1) 
&= P_\theta(Z = z_1) = \int_\cX q_x(z_1) p_\theta(x) \mu(dx) \\
&= \frac{1}{1 + e^\alpha} \int_\mathcal{X} \left( e^\alpha \indi{F^+}(x) + \indi{(F^+)^c}(x) \right) p_\theta(x) \mu(dx) \\
&= \frac{1}{1 + e^\alpha} + \frac{e^\alpha - 1}{1 + e^\alpha} \int_{F^+} p_\theta(x) \mu(dx),
\end{align*}
while $\widetilde{p}_{\theta}(z_2) = 1 - \widetilde{p}_{\theta}(z_1)$. From \eqref{eq : score public} and using the definition \eqref{eq: optimal q 49.5} of $q_x(z)$, the score function of the public data is:
\begin{align*}
t_{\theta_0}(z_1) 
&= \frac{1}{\widetilde{p}_{\theta_0}(z_1)} \int_\cX s_{\theta_0}(x) \, q_x(z_1) \, p_{\theta_0}(x) \mu(dx) \\
&= \frac{1}{\widetilde{p}_{\theta_0}(z_1)} \cdot \frac{1}{1 + e^\alpha} \int_\cX s_{\theta_0}(x) \left( e^\alpha \indi{F^+}(x) + \indi{(F^+)^c}(x) \right) p_{\theta_0}(x) \mu(dx) \\
&= \frac{1}{\widetilde{p}_{\theta_0}(z_1)} \cdot \frac{e^\alpha - 1}{1 + e^\alpha} \int_{F^+} s_{\theta_0}(x) p_{\theta_0}(x) \mu(dx).
\end{align*}
Similarly,
\[
t_{\theta_0}(z_2) = \frac{1}{\widetilde{p}_{\theta_0}(z_2)} \cdot \frac{e^\alpha - 1}{1 + e^\alpha} \int_{(F^+)^c} s_{\theta_0}(x) p_{\theta_0}(x) \mu(dx).
\]
Recalling that 
\[
n_{\max} := \int_{F^+} p_{\theta_0}(x) \mu(dx), \quad 
\int_{F^+} s_{\theta_0}(x) p_{\theta_0}(x) \mu(dx) = \frac{1}{2} \int_\cX \abs{s_{\theta_0}(x)} p_{\theta_0}(x) \mu(dx),
\]
we obtain, after straightforward algebra,
\begin{align}\label{eq: end lower}
\cI_{\theta_0}((\widetilde{p}_\theta)_\theta) 
&= \sum_{i = 1, 2} \widetilde{p}_{\theta_0}(z_i) \, t_{\theta_0}(z_i)^2 \notag \\
&= \frac{(e^\alpha - 1)^2}{(1 + e^\alpha)^2} \cdot \frac{1}{4} \left( \int_\cX \abs{s_{\theta_0}(x)} p_{\theta_0}(x) \mu(dx) \right)^2 \left( \frac{1}{\widetilde{p}_{\theta_0}(z_1)} + \frac{1}{\widetilde{p}_{\theta_0}(z_2)} \right) \notag \\
&= \frac{(e^\alpha - 1)^2}{4} \cdot \frac{\E[\abs{s_{\theta_0}(X)}]^2}{(1 + e^\alpha)^2} \cdot \frac{1}{\widetilde{p}_{\theta_0}(z_1)(1 - \widetilde{p}_{\theta_0}(z_1))}.
\end{align}
Moreover, observe that
\[
\widetilde{p}_{\theta_0}(z_1)(1 - \widetilde{p}_{\theta_0}(z_1)) = 
\frac{1}{1 + e^\alpha} \left( 1 + (e^\alpha - 1) n_{\max} \right)
\left( 1 - \frac{1}{1 + e^\alpha} (1 + (e^\alpha - 1) n_{\max}) \right),
\]
which simplifies to
\[
\frac{1}{(1 + e^\alpha)^2} \left( 1 + (e^\alpha - 1) n_{\max} \right) \left( e^\alpha - (e^\alpha - 1) n_{\max} \right).
\]
Plugging this into the denominator of \eqref{eq: end lower} yields the desired expression \eqref{eq : def I max caf fini}, thus concluding the proof.
\end{proof}	

\subsection{Continuous case: proof of results in Section \ref{ss: infinite}}{\label{ss: proof cont}}

As in the proof of the discrete case, we begin by introducing some notation that will be useful throughout the proofs in this subsection.

 For each $x \in \mathcal{X}$, define the sets
\[
Q^+_x := e_x^{-1}(\{e^\alpha\}) = \{ r \in \mathcal{E} \mid e_x(r) = e^\alpha \}, \quad
Q^-_x := e_x^{-1}(\{1\}) = \{ r \in \mathcal{E} \mid e_x(r) = 1 \}.
\]
Since the map $r \mapsto e_x(r)$ is measurable for each $x$, both $Q^+_x$ and $Q^-_x$ are Borel sets. These sets are the continuous counterparts of those defined in \eqref{eq: set Qx+-} in the previous subsection. With this notation, the kernel defined in \eqref{eq : def random generic extr ctn} can be written as
\[
q^{(\mu)}_x(dr) = \left( e^\alpha \indi{Q^+_x}(r) + \indi{Q^-_x}(r) \right) \mu(dr), \quad \text{for all } x \in \mathcal{X},
\]
and the normalization condition \eqref{eq : cond norm mu continue} becomes
\[
e^\alpha \mu(Q^+_x) + \mu(Q^-_x) = 1 \quad \text{for almost every } x \in \mathcal{X}.
\]
This construction plays the same role as the mechanism $q^{(1)}$ in the discrete case, defined in \eqref{eq: mechanism q1}.

With these definitions in place, we can now proceed to establish the upper and lower bounds stated in Theorem~\ref{thm : equivalent alpha petit ctn}.

 \subsubsection{Proof of Theorem \ref{thm : equivalent alpha petit ctn}}
 \begin{proof}
We begin by proving the upper bound on $\cJ^{\text{max},\alpha}_{\theta_0}$. Let $q \in \cQ_\alpha$. By the factorization result in Proposition~\ref{prop : factor ctn}, any such $q$ can be written as a composition $q = q^{(2)} \circ q^{(\mu)}$, where $\mu$ is a sub-probability measure on $\cE$ satisfying the normalization condition \eqref{eq : cond norm mu continue}, and $q^{(2)}$ is a Markov kernel from $\cE$ to $\cZ$. According to \eqref{eq: fact on Fisher cont}, the Fisher information of the model $q \circ \cP$ is upper bounded by the Fisher information of $q^{(\mu)} \circ \cP$. Therefore, it is sufficient to prove the desired upper bound for $\cI_{\theta_0}(q^{(\mu)} \circ \cP)$. Using \eqref{eq : Fisher ctn general_preli}, we obtain
\begin{equation} \label{eq : majo I in prop alpha small ctn}
	\cI_{\theta_0}(q \circ \cP) \le (e^\alpha - 1)^2
	\int_\cE \frac{\left( \int_{F^+_r} s_{\theta_0}(x) p_{\theta_0}(x) dx \right)^2}
	{\widetilde{p}_{\theta_0}(r)} \mu(dr).
\end{equation}
Now, define the sets $F^+_{\text{max}} := \{ x \in \mathcal{X} \mid s_{\theta_0}(x) > 0 \}$ and $F^{\prime +}_{\text{max}} := \{ x \in \mathcal{X} \mid s_{\theta_0}(x) \le 0 \}$, so that $\mathcal{X} = F^+_{\text{max}} \cup F^{\prime +}_{\text{max}}$. As in the discrete case (see in particular Equation~\eqref{eq : iF smaller iFmax} of the proof of Lemma~\ref{lem :  determination max fini}), for any Borel set $F \subset \mathcal{X}$ we have
\[
\left| \int_F s_{\theta_0}(x) p_{\theta_0}(x) dx \right|
\le \frac{1}{2} \int_{\mathcal{X}} |s_{\theta_0}(x)| p_{\theta_0}(x) dx
= \left| \int_{F^+_{\text{max}}} s_{\theta_0}(x) p_{\theta_0}(x) dx \right|
= \left| \int_{F^{\prime +}_{\text{max}}} s_{\theta_0}(x) p_{\theta_0}(x) dx \right|.
\]
Inserting this bound into \eqref{eq : majo I in prop alpha small ctn}, we find
\begin{equation*}
	\cI_{\theta_0}(q \circ \cP) \le
	(e^\alpha - 1)^2
	\frac{ \left( \int_{\mathcal{X}} |s_{\theta_0}(x)| p_{\theta_0}(x) dx \right)^2 }{4}
	\int_\cE \frac{1}{\widetilde{p}_{\theta_0}(r)} \mu(dr).
\end{equation*}
Recalling from \eqref{eq : tilde p ctn} that $\widetilde{p}_{\theta_0}(r) \ge 1$ for all $r \in \cE$ (since $r(x) \ge 1$), and using that $\mu$ is a sub-probability measure, we get
\[
\int_\cE \frac{1}{\widetilde{p}_{\theta_0}(r)} \mu(dr) \le \mu(\cE) \le 1.
\]
This establishes the upper bound in \eqref{eq :  encadrement Fisher ctn small alpha}.

\medskip

We now turn to the lower bound. It is sufficient to construct an $\alpha$-LDP mechanism whose Fisher information exceeds the lower bound in \eqref{eq : encadrement Fisher ctn small alpha}. Define the two extremal elements
\[
r_1 := 1 + (e^\alpha - 1) \indi{F^+_{\text{max}}}, \qquad
r_2 := 1 + (e^\alpha - 1) \indi{F^{\prime +}_{\text{max}}},
\]
which clearly belong to $\mathcal{E}$. Mimicking the construction in the discrete case (see Equation~\eqref{eq: optimal q 49.5}), we define the randomized mechanism $q^{(\mu)}$ with
\begin{equation} \label{eq: qmu 28.5}
	\mu := \frac{1}{1 + e^\alpha} \left( \delta_{r_1} + \delta_{r_2} \right).
\end{equation}
We verify that this mechanism satisfies the normalization condition \eqref{eq : cond norm mu continue}. Since $e_x(r_j) = r_j(x)$ a.e.\ for $j=1,2$ (as established in Subsection~\ref{ss:point eval}), it suffices to check that
\[
\int_\cE e_x(r) \mu(dr)=\frac{1}{1 + e^\alpha} \sum_{j=1}^2 r_j(x) = 1 \quad \text{a.e.},
\]
which follows directly from the disjointness of $F^+_{\text{max}}$ and $F^{\prime +}_{\text{max}}$ and their union being the whole space $\mathcal{X}$ up to null sets. Note that for $j=1,2$,
\[
\int_{\mathcal{X}} s_{\theta_0}(x) r_j(x) p_{\theta_0}(x) dx
= \int_{\mathcal{X}} s_{\theta_0}(x) p_{\theta_0}(x) dx +
(e^\alpha - 1) \int_{F^{+}_{\text{max}} \text{ or } F^{\prime +}_{\text{max}}} s_{\theta_0}(x) p_{\theta_0}(x) dx,
\]
and the first term is zero by centering of $s_{\theta_0}$. The Fisher information of this mechanism therefore is, by \eqref{eq : Fisher ctn general_preli},
\begin{align*}
	\cI_{\theta_0}(q^{(\mu)} \circ \cP)
	&= \frac{(e^\alpha - 1)^2}{1 + e^\alpha}
	\left[
	\frac{\left( \int_{F^+_{\text{max}}} s_{\theta_0}(x) p_{\theta_0}(x) dx \right)^2}
	{\widetilde{p}_{\theta_0}(r_1)} +
	\frac{\left( \int_{F^{\prime +}_{\text{max}}} s_{\theta_0}(x) p_{\theta_0}(x) dx \right)^2}
	{\widetilde{p}_{\theta_0}(r_2)}
	\right].
\end{align*}
 Moreover, $r_j(x) \le e^\alpha$ implies $\widetilde{p}_{\theta_0}(r_j) \le e^\alpha$ for $j = 1,2$. Since the two integrals in the numerator are equal in absolute value and each equals
\[
\frac{1}{2} \int_{\mathcal{X}} |s_{\theta_0}(x)| p_{\theta_0}(x) dx,
\]
we deduce the lower bound
\begin{equation*}
	\cI_{\theta_0}(q^{(\mu)} \circ \cP) \ge
	\frac{(e^\alpha - 1)^2}{1 + e^\alpha} \cdot \frac{2}{e^\alpha}
	\left( \frac{1}{2} \int_{\mathcal{X}} |s_{\theta_0}(x)| p_{\theta_0}(x) dx \right)^2.
\end{equation*}
This concludes the proof of the lower bound in the theorem.

\end{proof}

\subsubsection{Proof of Proposition \ref{prop : maximisation pb convex}}
\begin{proof}
Since the mapping $r \mapsto j(r)$ is bounded by a constant, we deduce that $J^\star < \infty$. Moreover, the set of Radon sub-probability measures $\mu$ satisfying the constraint in \eqref{eq : optimisation pb ctn J} is non-empty. One example is $\mu$ being the Dirac mass on the function $r_0$ defined by $r_0(x)=1$ for all $x\in \cX$. 
% of such a measure is given in the proof of Theorem \ref{thm : equivalent alpha petit ctn}, where it is shown that $q^{(\mu)} = e_x(r) \mu(dr)$ satisfies \eqref{eq : cond norm mu continue}, with $q^{(\mu)}$ defined as in \eqref{eq: qmu 28.5}. 
Hence, there exists a sequence of sub-probabilities $\mu_n$ on $\mathcal{E}$ satisfying the constraint in \eqref{eq : optimisation pb ctn J} and such that
\[
\int_\mathcal{E} j(r) \mu_n(dr) \xrightarrow{n \to \infty} J^\star.
\]
Since each $\mu_n$ is supported on $\mathcal{E}$, we have $\mu_n(\theconvex) = \mu_n(\mathcal{E}) \leq 1$. As $\theconvex$ is compact for the weak-$\star$ topology, the sequence $(\mu_n)_n$ is tight and thus admits a converging subsequence. Let $k : \mathbb{N} \to \mathbb{N}$ be a strictly increasing function such that $(\mu_{k(n)})_n$ converges weakly to some Radon sub-probability measure $\mu^\star$ on $\theconvex$.

The main difficulty in the proof is that although $\mu_n$ are supported on $\mathcal{E}$, the limit $\mu^\star$ is not necessarily supported on $\mathcal{E}$, since $\mathcal{E}$ is not closed. We will show that the existence of an optimal measure $\overline{\mu}$ supported on $\mathcal{E}$ follows from a convexity argument.

First, we show that $\mu^\star$ inherits properties from the linear constraints in \eqref{eq : optimisation pb ctn J}. Let $h \in L^1(\mathcal{X})$. Then, the map 
\[
r \in \theconvex \mapsto \int_\mathcal{X} h(x) r(x) dx
\]
is continuous for the weak-$\star$ topology and bounded. Hence, the weak convergence of $\mu_{k(n)}$ towards $ \mu^\star$ implies
\begin{equation} \label{eq : proof closeness contraint}
\int_\theconvex \left( \int_\mathcal{X} h(x) r(x) dx \right) \mu_{k(n)}(dr) \xrightarrow{n \to \infty} \int_\theconvex \left( \int_\mathcal{X} h(x) r(x) dx \right) \mu^\star(dr).
\end{equation}
Using that $e_x(r) = r(x)$ a.e., the left-hand side becomes by Fubini theorem,
\[
\int_\mathcal{X} h(x) \left( \int_\theconvex e_x(r) \mu_{k(n)}(dr) \right) dx.
\]
By the constraint $\int_\mathcal{E} e_x(r) \mu_{k(n)}(dr) = 1$ a.e.\ in $x$, this simplifies to $\int_\mathcal{X} h(x) dx$. Similarly, the right-hand side of \eqref{eq : proof closeness contraint} becomes
\[
\int_\mathcal{X} h(x) \left( \int_\theconvex e_x(r) \mu^\star(dr) \right) dx.
\]
Therefore,
\[
\int_\mathcal{X} h(x) dx = \int_\mathcal{X} h(x) \left( \int_\theconvex e_x(r) \mu^\star(dr) \right) dx, \quad \text{for all } h \in L^1(\mathcal{X}),
\]
implying that
\[
\int_\theconvex e_x(r) \mu^\star(dr) = 1 \quad \text{a.e.\ in } x.
\]
Since $r \mapsto j(r)$ is continuous and bounded, we have
\[
J^\star = \lim_{n \to \infty} \int_\theconvex j(r) \mu_{k(n)}(dr) = \int_\theconvex j(r) \mu^\star(dr).
\]
Consequently, $\mu^\star$ is a solution to the optimization problem \eqref{eq : optimisation pb ctn J} when integrals are taken over the convex set $\theconvex$ instead of its extremal points $\mathcal{E}$.

\medskip

We now construct a measure $\overline{\mu}$ supported on $\mathcal{E}$. The proof uses results from the proof of Choquet’s theorem in \cite{Choquet}. Denote
\[
\mathfrak{M}^+ := \{ \mu \mid \text{positive Radon measures on } \theconvex \}.
\]
Following \cite{Choquet}, we endow $\mathfrak{M}^+$ with the partial order: for $\mu_1, \mu_2 \in \mathfrak{M}^+$, we write $\mu_1 \leq \mu_2$ if
\begin{equation} \label{eq : def rel ordre Choquet}
\left\{
\begin{aligned}
&\mu_1(\theconvex) = \mu_2(\theconvex), \\
&\int_\theconvex r \mu_1(dr) = \int_\theconvex r \mu_2(dr), \\
&\int_\theconvex f(r) \mu_1(dr) \leq \int_\theconvex f(r) \mu_2(dr), \quad \text{for all continuous convex } f : \theconvex \to \mathbb{R}_+.
\end{aligned}
\right.
\end{equation}
Lemma 1 in \cite{Choquet} shows that $(\mathfrak{M}^+, \leq)$ is inductive, meaning any chain admits an upper bound. Corollary 1 in \cite{Choquet}, which is a consequence of Zorn's lemma, then implies that any $\mu \in \mathfrak{M}^+$ is upper bounded by a maximal element $\nu \in \mathfrak{M}^+$. Applying this to $\mu^\star$, there exists a maximal element $\overline{\mu} \in \mathfrak{M}^+$ such that $\mu^\star \leq \overline{\mu}$. By the first point in the definition 
 $\mu^\star \leq \overline{\mu}$, we have $1 \geq \mu^\star(\theconvex) = \overline{\mu}(\theconvex)$, so $\overline{\mu}$ is a sub-probability. Moreover,
\[
\int_\theconvex r \mu^\star(dr) = \int_\theconvex r \overline{\mu}(dr),
\]
and thus, applying $e_x$ by Proposition \ref{p: evaluation},
\[
\int_\theconvex e_x(r) \overline{\mu}(dr) = \int_\theconvex e_x(r) \mu^\star(dr) = 1 \quad \text{a.e.\ in } x.
\]
Applying the third property in \eqref{eq : def rel ordre Choquet} with the convex function $f = j$, we obtain
\[
J^\star = \int_\theconvex j(r) \mu^\star(dr) \leq \int_\theconvex j(r) \overline{\mu}(dr).
\]
By Lemma 5 in \cite{Choquet}, any maximal element in $\mathfrak{M}^+$ is supported on the set of extremal points of $\theconvex$, which is $\mathcal{E}$. Consequently,
\[
\int_\mathcal{E} e_x(r) \overline{\mu}(dr) = \int_\theconvex e_x(r) \overline{\mu}(dr) = 1 \quad \text{a.e.\ in } x,
\]
and
\[
J^\star \leq \int_\mathcal{\theconvex} j(r) \overline{\mu}(dr)=\int_\mathcal{E} j(r) \overline{\mu}(dr).
\]
By definition of $J^\star$ as the supremum, we conclude
\[
J^\star = \int_\mathcal{E} j(r) \overline{\mu}(dr).
\]
Hence, $\overline{\mu}$ is a solution to the optimization problem \eqref{eq : optimisation pb ctn J}.
\end{proof}

\subsubsection{Proof of Theorem \ref{thm: main opt cont}}
The proof of the theorem is based on the fact that we can apply Proposition \ref{prop : maximisation pb convex} to the function \( i \) defined as in \eqref{eq : def i r ctn convex}. This is justified by the following lemma.
\begin{lem}\label{lem : i ctn cvx}
The function 
\[
i: \begin{cases} 
& \theconvex \to \R_+ \\ 
&r \mapsto i(r)
\end{cases}
\]
is bounded, continuous, and convex.
\end{lem}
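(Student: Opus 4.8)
The plan is to isolate the linear structure underlying $i$ and then dispatch boundedness, continuity, and convexity in turn. Write $a(r):=\int_{\cX}s_{\theta_0}(x)\,r(x)\,p_{\theta_0}(x)\,dx$ and $b(r):=\int_{\cX}r(x)\,p_{\theta_0}(x)\,dx$, so that $i(r)=a(r)^2/b(r)$ with both $a$ and $b$ \emph{linear} in $r$. Two elementary facts drive the proof. First, $s_{\theta_0}p_{\theta_0}\in L^1(\cX)$ and $p_{\theta_0}\in L^1(\cX)$: DQM at $\theta_0$ makes $\E_{\theta_0}[\abs{s_{\theta_0}(X)}^2]$ finite (see the recap after Definition~\ref{def: DQM}), hence $\E_{\theta_0}[\abs{s_{\theta_0}(X)}]<\infty$ by Cauchy--Schwarz, while $p_{\theta_0}$ integrates to $1$. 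Second, for every $r\in\theconvex$ one has $1\le r(x)\le e^{\alpha}$ a.e., so $b(r)\ge\int_{\cX}p_{\theta_0}(x)\,dx=1$ and $\abs{a(r)}\le e^{\alpha}\,\E_{\theta_0}[\abs{s_{\theta_0}(X)}]$.

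From these two facts, boundedness and continuity are immediate. Boundedness: $0\le i(r)=a(r)^2/b(r)\le a(r)^2\le e^{2\alpha}\,\E_{\theta_0}[\abs{s_{\theta_0}(X)}]^2<\infty$, uniformly in $r\in\theconvex$. Continuity: since $L^1(\cX)$ is the predual of $L^\infty(\cX)$, the very definition of the weak-$\star$ topology (Definition~\ref{def: weakstar top}) makes $r\mapsto\int_{\cX}g(x)\,r(x)\,dx$ continuous on $\theconvex$ for each $g\in L^1(\cX)$; applying this with $g=s_{\theta_0}p_{\theta_0}$ and with $g=p_{\theta_0}$ shows that $a$ and $b$ are weak-$\star$ continuous, and since $b\ge1$ stays bounded away from $0$ on $\theconvex$, the quotient $i=a^2/b$ is continuous for the metric $d_\star$.

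The only point with any content is convexity, which I would obtain from the variational identity
\[
i(r)=\frac{a(r)^2}{b(r)}=\sup_{t\in\R}\bigl(2t\,a(r)-t^2\,b(r)\bigr),
\]
valid because $b(r)>0$ (the parabola $t\mapsto 2t\,a(r)-t^2 b(r)$ is maximized at $t=a(r)/b(r)$, with value $a(r)^2/b(r)$). For each fixed $t$ the map $r\mapsto 2t\,a(r)-t^2 b(r)$ is affine in $r$, hence convex, and a pointwise supremum of convex functions is convex; therefore $i$ is convex on $\theconvex$. Equivalently, one may cite the joint convexity of the perspective function $(a,b)\mapsto a^2/b$ on $\R\times(0,\infty)$ and precompose it with the linear map $r\mapsto(a(r),b(r))$. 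There is no genuine obstruction in any of this; the single spot deserving care is the continuity step, where one must pair numerator and denominator against elements of the \emph{predual} $L^1(\cX)$ --- exactly what the integrability of $s_{\theta_0}p_{\theta_0}$ and $p_{\theta_0}$ supplies --- so that weak-$\star$ continuity, rather than mere norm continuity, is available, this being what the compactness argument of Proposition~\ref{prop : maximisation pb convex} ultimately relies on.
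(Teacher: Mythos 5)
Your proof is correct, and the boundedness and continuity steps coincide with the paper's (same bound $i(r)\le e^{2\alpha}\,\E_{\theta_0}[\abs{s_{\theta_0}(X)}]^2$, same observation that $s_{\theta_0}p_{\theta_0}$ and $p_{\theta_0}$ live in the predual $L^1(\cX)$ so that $a$ and $b$ are weak-$\star$ continuous and $b\ge 1$). Where you diverge is in establishing convexity. The paper argues via the algebraic identities $i(\lambda r)=\lambda\, i(r)$ for $\lambda>0$ (positive $1$-homogeneity) and $i(r_1+r_2)\le i(r_1)+i(r_2)$ (subadditivity, which after clearing denominators reduces to $(a_1 b_2 - a_2 b_1)^2\ge 0$); together these give convexity. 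You instead write $i(r)=\sup_{t\in\R}\bigl(2t\,a(r)-t^2 b(r)\bigr)$ and observe that this is a pointwise supremum of maps affine in $r$, hence convex --- equivalently, you invoke joint convexity of the perspective $(a,b)\mapsto a^2/b$ on $\R\times(0,\infty)$ precomposed with the linear map $r\mapsto(a(r),b(r))$. The two routes are of course equivalent in content (both rest on $a^2/b$ being convex in $(a,b)$), but yours is arguably more transparent since the supremum representation makes the convexity manifest without any computation, whereas the paper's subadditivity inequality requires a small algebraic check; conversely, the paper's route avoids introducing the auxiliary variable $t$. Either is acceptable, and your version is a clean alternative.
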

\begin{proof}
Since \( s_{\theta_0} \in L^2(p_{\theta_0}) \) and \( 1 \le r \le e^{\alpha} \) for \( r \in \theconvex \), we deduce that
\[
i(r) \le e^{2\alpha} \left( \int_\cX \abs{s_{\theta_0}(x)} p_{\theta_0}(x) \, dx \right)^2
\le e^{2\alpha} \norm{s_{\theta_0}}_{ L^2(p_{\theta_0})}^2, ~
\quad \text{for all } r \in \theconvex.
\]
Hence, \( i \) is bounded. Also, the functions \( x \mapsto s_{\theta_0}(x) p_{\theta_0}(x) \) and \( x \mapsto p_{\theta_0}(x) \) belong to \( L^1(\cX) \), thus the mapping \( r \mapsto i(r) \) is continuous in the weak-\(\star\) topology
recalling the expression \eqref{eq : def i r ctn convex}. The convexity of \( i \) follows from the algebraic properties:
\begin{align*}
i(\lambda r) &= \abs{\lambda} \, i(r), \quad \text{for all } \lambda \in \R,\, r \in \theconvex, \\
i(r_1 + r_2) &\le i(r_1) + i(r_2), \quad \text{for all } r_1, r_2 \in \theconvex.
\end{align*}
These properties are verified through straightforward computations using the definition \eqref{eq : def i r ctn convex}.
\end{proof}

\begin{comment}
\begin{prop} \label{prop : existence mu sup pb info ctn}
There exists a Radon sub-probability \( \overline{\mu} \) on \( \cE \) such that
\[
I^* := \int_\cE i(r) \, \overline{\mu}(dr), \quad
\text{and } \quad  \int_\cE e_x(r) \, \overline{\mu}(dr) = 1 \quad \text{for a.e. } dx.
\]
\end{prop}

\begin{proof}
It is a direct application of Lemma \ref{lem : i ctn cvx} and Proposition \ref{prop : maximisation pb convex}.
\end{proof}
\end{comment}

We can now invoke Lemma \ref{lem : i ctn cvx} to prove Theorem \ref{thm: main opt cont}, as detailed below.

\begin{proof}
For any \( q \in \cQ_\alpha \), by the factorization result in Proposition \ref{prop : factor ctn}, we can write \( q = q^{(2)} \circ q^{(\mu)} \). Using the fact that Fisher information is reduced by randomization, we have
\[
\cI_{\theta_0}(q \circ \cP) \le \cI_{\theta_0}(q^{(\mu)} \circ \cP).
\]
Thus,
\[
\cJ^{\max,\alpha}_{\theta_0} \le \sup_{\mu} \cI_{\theta_0}(q^{(\mu)} \circ \cP),
\]
where the supremum is over all Radon sub-probabilities satisfying the condition \eqref{eq : cond norm mu continue}. 
From \eqref{eq : Fisher ctn general_preli}, we know that
\[
\cI_{\theta_0}(q^{(\mu)} \circ \cP) = \int_\cE i(r) \, \mu(dr),
\]
where \( i \) is defined in \eqref{eq : def i r ctn convex}. By Proposition \ref{prop : maximisation pb convex} (applicable by Lemma \ref{lem : i ctn cvx}), there exists a Radon sub-probability \( \overline{\mu} \) on \( \cE \) such that
\[
\sup_{\mu} \int_\cE i(r) \, \mu(dr) = \int_\cE i(r) \, \overline{\mu}(dr) = \cI_{\theta_0}(q^{(\overline{\mu})} \circ \cP).
\]
It follows that
\[
\cJ^{\max,\alpha}_{\theta_0} \le \cI_{\theta_0}(q^{(\overline{\mu})} \circ \cP).
\]

We now show the reverse inequality \( \cI_{\theta_0}(q^{(\overline{\mu})} \circ \cP) \le \cJ^{\max,\alpha}_{\theta_0} \). As the mechanism \( q^{(\overline{\mu})} \) is not valued in a countable space, this is not straightforward. From Lemma \ref{lem : expression tilde p et score ctn}, the experiment \( q^{(\overline{\mu})} \circ \cP \) is DQM with continuous density \( \widetilde{p}_{\theta_0}(r) \) and score function \( t_{\theta_0}(r) \). Let \( \varepsilon > 0 \). Since \( \theconvex \) is compact for the \( d_\star \) distance, we can write
\[
\theconvex = \bigsqcup_{i=1}^N C_i,
\]
where \( C_1, \dots, C_N \) are measurable sets forming a partition with \( d_\star \)-radius at most \( \varepsilon \). Define the map \( T^\varepsilon: \theconvex \to \{1, \dots, N\} \) by
\[
T^\varepsilon(r) = \sum_{i=1}^N \indi{C_i}(r),
\]
which is a randomization. Then set \( q^{(\overline{\mu}), \varepsilon} := T^\varepsilon \circ q^{(\overline{\mu})} \), which defines an \( \alpha \)-LDP randomization from \( \cX \) to $\{1, \dots, N\}$.
By Lemma 3.1 of \cite{steinbergerEfficiencyLocalDifferent}, the experiment 
\[
q^{(\overline{\mu}),\varepsilon} \circ \cP 
= T^\varepsilon \circ (q^{(\overline{\mu})} \circ \cP)
\]
is DQM with density
\[
p^\varepsilon_{\theta_0}(i) = \int_{C_i} \widetilde{p}_{\theta_0}(r) \, \overline{\mu}(dr),
\quad i \in \{1, \dots, N\},
\]
and score function
\[
t^\varepsilon_{\theta_0}(i) = \frac{ \int_{C_i} t_{\theta_0}(r) \widetilde{p}_{\theta_0}(r) \, \overline{\mu}(dr) }{ p^\varepsilon_{\theta_0}(i) }.
\]
Again from Lemma 3.1 in \cite{steinbergerEfficiencyLocalDifferent}, the Fisher information satisfies:
\begin{equation} \label{eq : loss Info quantif}
\cI_{\theta_0}(T^\varepsilon \circ (q^{(\overline{\mu})} \circ \cP)) =
\cI_{\theta_0}(q^{(\overline{\mu})} \circ \cP) -
\int_\cE \left( t_{\theta_0}(r) - t^\varepsilon_{\theta_0}(T^\varepsilon(r)) \right)^2
\widetilde{p}_{\theta_0}(r) \, \overline{\mu}(dr).
\end{equation}
Since \( t_{\theta_0} \) and \( \widetilde{p}_{\theta_0} \) are uniformly continuous on the compact set \( \theconvex \), we can make
\[
\sup_{r \in \cE} \abs{ t_{\theta_0}(r) - t^\varepsilon_{\theta_0}(T^\varepsilon(r)) }
\]
arbitrarily small by choosing \( \varepsilon \) small enough. Consequently, the integral on the RHS of \eqref{eq : loss Info quantif} can be made arbitrarily small.

Thus, \( \cI_{\theta_0}(q^{(\overline{\mu})} \circ \cP) \) can be approximated by the Fisher information of finite-valued \( \alpha \)-LDP mechanisms, implying:
\[
\cI_{\theta_0}(q^{(\overline{\mu})} \circ \cP) \le \sup_{q \in \cQ_\alpha} \cI_{\theta_0}(q \circ \cP) = \cJ^{\max,\alpha}_{\theta_0}.
\]
This concludes the proof.
\end{proof}

\section{Proofs of technical results}{\label{s: proof technical}}
This section presents the detailed proofs of the properties stated in Section \ref{S: tools}, as well as the proof of some technical lemmas needed to prove our main results.
\subsection{Proofs of technical results in functional analysis}
We begin by proving that the set $\theconvex$ is compact with respect to the weak-$\star$ topology, as stated in Lemma \ref{l: B closed}. We then proceed to Lemma \ref{l: B separable}, which asserts that $\widehat{B}$, equipped with the weak-$\star$ topology, is separable. This separability is crucial to ensuring that the integral $\int_{\widehat{B}} f \, \mu(df)$, introduced in Lemma \ref{l: Bochner}, is well-defined in the sense of Bochner. Establishing this will, in turn, allow us to rigorously justify the pointwise evaluation given in Proposition \ref{p: evaluation}. The proof of this proposition, which is the most technically involved part of this section, will conclude our exposition of the technical tools.

\subsubsection{Proof of Lemma \ref{l: B closed}}
\begin{proof}
Let us begin by proving that the set \( \theconvex \) is closed with respect to the weak-\(\star\) topology. Let \( (f_n)_n \subset C \) be a sequence such that \( f_n \xrightharpoonup{\star} f \). Since \( C \subset \widehat{B}=B_{\|\cdot\|_\infty}(0, e^{\alpha}) \), and the closed ball $\widehat{B}$ %\( B_{\|\cdot\|_\infty}(0, e^{\alpha}) \) 
is compact in the weak-\(\star\) topology (by the Banach–Alaoglu Theorem, as stated in Theorem~\ref{th: BanAla}, together with the fact that dilations are continuous mappings in the strong topology, hence also in the weak and weak-\(\star\) topologies), it follows that $ f \in \widehat{B}$. % B_{\|\cdot\|_\infty}(0, e^{\alpha}) \). 
To conclude that \( f \in \theconvex \), it remains to show that \( f(x) \geq 1 \) almost everywhere on \( \mathcal{X} \). 

Recall that, by the definition of weak-\(\star\) convergence, for all \( \varphi \in L^1(\mathcal{X}) \), we have:
\[
\langle \varphi, f_n \rangle_{L^1, L^\infty} \longrightarrow \langle \varphi, f \rangle_{L^1, L^\infty}, \quad \text{as } n \to \infty.
\]
Since each \( f_n \in \theconvex \), we have \( f_n(x) \geq 1 \) almost everywhere, and thus for $\varphi \in L^1(\cX)$, $\varphi \ge0$ :
\[
\int_{\mathcal{X}} \varphi(x) \cdot 1 \, dx \leq \int_{\mathcal{X}} \varphi(x) f_n(x) \, dx.
\]
Passing to the limit, we obtain:
\[
\int_{\mathcal{X}} \varphi(x) \, dx \leq \int_{\mathcal{X}} \varphi(x) f(x) \, dx,
\]
which implies:
\[
\int_{\mathcal{X}} \varphi(x) [f(x) - 1] \, dx \geq 0, \quad \forall \varphi \in L^1(\mathcal{X}) \text{ with $\varphi \ge 0$.}
\]
This inequality shows that \( f(x) \geq 1 \) almost everywhere, as desired.

We conclude that \( f \in \theconvex \), hence \( \theconvex \) is closed in the weak-\(\star\) topology. Since \( \theconvex \) is also contained in a weak-\(\star\) compact set, it follows that \( \theconvex \) is compact in the weak-\(\star\) topology.
% by Banach–Alaoglu, using again that dilations and translations are continuous in the strong topology, and therefore also in the weak and weak-\(\star\) topologies.
\end{proof}

\subsubsection{Proof of Lemma \ref{l: B separable}}
\begin{proof}
The proof of this lemma consists in explicitly constructing a countable sequence that is dense in \( \widehat{B} \). It is well known that \( L^1(\cX) \) is separable, so we may fix a countable dense sequence \( (f_n)_{n \in \mathbb{N}} \subset L^1(\cX) \). Define the truncated functions \( \tilde{f}_n := -e^\alpha \lor f_n \land e^\alpha \), which belong to \( \widehat{B} \). Our goal is to show that the sequence \( (\tilde{f}_n)_{n \in \mathbb{N}} \) is dense in \( \widehat{B} \) with respect to the weak-\(\star\) topology.

To this end, let \( g \in \widehat{B} \). Consider the truncations \( g_M := g \cdot \indi{\{|x| \le M\}} \). It satisfies $(-e^\alpha) \vee g_M \wedge e^\alpha$
$\xrightharpoonup{\star} g$  as \( M \to \infty \). Indeed, the sequence $(-e^\alpha)\vee g_M(x) \wedge e^\alpha$ converges to  $(-e^\alpha) \vee g(x) \wedge e^\alpha=g(x)$ for almost every $x \in \cX$.
%, which satisfy \( g_M \xrightharpoonup{\star} g \) as \( M \to \infty \). 
%Indeed, since both \( g_M \) and \( g \) lie in \( L^\infty(\mathbb{R}^d) \), 
Hence, the dominated convergence theorem can be applied, ensuring that for any \( f \in L^1(\mathcal{X}) \),
\[
\int_{\cX} f(x) (-e^\alpha)\vee g_M(x) \wedge e^\alpha \, dx \to \int_{\cX} f(x) g(x) \, dx \qquad \mbox{ for } M\rightarrow \infty.
\]
Thus, it suffices to approximate each \( (-e^\alpha )\vee g_M \wedge e^\alpha \) by elements of the sequence \( (\tilde{f}_n)_{n \in \mathbb{N}} \).

Observe that \( g_M \in L^1(\cX) \). Since \( (f_n) \) is dense in \( L^1(\cX) \), there exists a strictly increasing map \( j : \mathbb{N} \to \mathbb{N} \) such that \( f_{j(n)} \to g_M \) in \( L^1(\cX) \). Up to passing to a subsequence, we may also assume \( f_{j(n)}(x) \to g_M(x) \) for almost every \( x \in \cX \). %Given that definitively \( g_M \) satisfies \( 1 \le g_M(x) \le e^\alpha \), 
It follows that
\[
\tilde{f}_{j(n)}(x) = (-e^\alpha) \lor f_{j(n)}(x) \land e^\alpha \to (-e^\alpha)\vee g_M(x) \wedge e^\alpha  \quad \text{a.e. in } \cX,
\]
and \( \tilde{f}_{j(n)} \in B \) for all \( n \). Applying the dominated convergence theorem once more, we conclude that for all \( \varphi \in L^1(\cX) \),
\[
\int_{\cX} \varphi(x) \tilde{f}_{j(n)}(x) \, dx \to \int_{\cX} \varphi(x)  \left( (-e^\alpha)\vee g_M(x) \wedge e^\alpha \right) \, dx,
\]
so that \( \tilde{f}_{j(n)} \xrightharpoonup{\star}  (-e^\alpha)\vee g_M \wedge e^\alpha \). This proves the lemma.  
%Since \( g_M \xrightharpoonup{\star} g \), we conclude that the sequence \( (\tilde{f}_n) \) is weak-\(\star\) dense in \( B \), as desired.
\end{proof}

\subsubsection{Proof of Lemma \ref{l: Bochner}}
\begin{proof}
%We prove the claim first for the set \( \hat{B} := B_{\|\cdot\|_\infty}(0, e^{\alpha}) \), and then deduce the result for \( B \) by observing that \( B \subset \hat{B} \). As a preliminary step, we note that the proof of Lemma \ref{l: B separable} applies verbatim to \( \hat{B} \), which immediately implies that \( \hat{B} \), endowed with the weak-* topology, is separable. Moreover, by Lemma \ref{l: B metrizable}, this topology is metrizable with the compatible distance \( d_\star \) defined in \eqref{eq: def dstar}.

Using Lemma \ref{l: B separable}, we let \( (f_n)_{n \in \mathbb{N}} \) be a countable sequence dense in \( \widehat{B} \) for the $d_\star$ distance. Fix \( M \geq 1 \) and define a sequence of subsets of \( \widehat{B} \) as follows:
\[
\begin{aligned}
B_0^{(M)} & := \left\{ f \in \widehat{B} \,:\, d_\star(f, f_0) < \frac{1}{M} \right\}, \\
B_1^{(M)} & := \left\{ f \in \widehat{B} \,:\, d_\star(f, f_1) < \frac{1}{M} \right\} \setminus B_0^{(M)}, \\
& \ \, \vdots \\
B_n^{(M)} & := \left\{ f \in \widehat{B} \,:\, d_\star(f, f_n) < \frac{1}{M} \right\} \setminus \bigcup_{j=0}^{n-1} B_j^{(M)},
\end{aligned}
\]
and so on, for all \( n \in \mathbb{N} \). By construction, these sets are disjoint and, since \( (f_n)_{n \in \mathbb{N}} \) is dense in \( \widehat{B} \), we have \( \bigcup_{n=0}^\infty B_n^{(M)} = \widehat{B} \). 
Moreover, by compactness of $\widehat{B}$, we know that any covering of $\widehat{B}$ by open balls of radius $1/M$ admits a finite covering. Thus, for $n$ large enough, the set $B^{(M)}_n$ are empty. In turn,
the union $\bigcup_{n=0}^\infty B_n^{(M)}$ is finite partition of \( \widehat{B} \), which can be used to approximate integrals on \( \widehat{B} \) via simple functions. 

For each \( f \in \widehat{B} \), define the function
\begin{equation}{\label{eq: Id f}}
I_d^{(M)}(f) := \sum_{n = 0}^\infty f_n \indi{B_n^{(M)}}(f),
\end{equation}
where the above sum is actually on a finite number of indexes, depending on $M>0$.
By construction, \( I_d^{(M)}(f) \in \widehat{B} \), and
\[
d_\star\left( I_d^{(M)}(f), f \right) \le \frac{1}{M}.
\]
Furthermore, the integral of the simple map \(  f \mapsto I_d^{(M)}(f) \) is 
%a Bochner integral, 
given explicitly by a sum :
\begin{equation}{\label{eq: int Id f}}
\int_{\widehat{B}} I_d^{(M)}(f) \, \mu(df) = \sum_{n = 0}^\infty f_n \mu\left(B_n^{(M)}\right).
\end{equation}
Since each \( f_n \in \widehat{B} \), we have
\[
\left\| \int_{\widehat{B}} I_d^{(M)}(f) \, \mu(df) \right\|_\infty \le \sup_n \|f_n\|_\infty \sum_{n = 0}^\infty \mu(B_n^{(M)}) \le e^\alpha,
\]
which implies that \( \int_{\widehat{B}} I_d^{(M)}(f) \, \mu(df) \in \widehat{B} \).

Thus, we have constructed a sequence of simple functions that approximate the integral in the sense of Bochner:
\[
\lim_{M \to \infty} \int_{\widehat{B}} d_\star\left( I_d^{(M)}(f), f \right) \, \mu(df) = 0,
\]
as required by the definition of the Bochner integral. This completes the proof.
\end{proof}
\begin{rem}{\label{rk: Cauchy}}
For $M > 0$ and $H > 0$, we have $d_\star\left( I_d^{(M)}(f), I_d^{(M+H)}(f) \right) \le \frac{2}{M}.$
From the definition of $d_\star$ and the fact that $\mu(\widehat{B}) \le 1$, it is possible to check that
\[
d_\star \left( \int_{\widehat{B}} I_d^{(M)}(f) \, \mu(df), \int_{\widehat{B}} I_d^{(M+H)}(f) \, \mu(df) \right) \le \frac{2}{M},
\]
which shows that the sequence of approximating integrals is Cauchy. Moreover, we obtain
\begin{equation}\label{eq : approx Bochner d star}
d_\star \left( \int_{\widehat{B}} I_d^{(M)}(f) \, \mu(df), \int_{\widehat{B}} f \, \mu(df) \right) \le \frac{2}{M},
\end{equation}
a bound that will be useful in the next proof.
\end{rem}
\subsubsection{Proof of Proposition \ref{p: evaluation}}
\begin{proof}
	To prove the desired result, we proceed through a structured argument split into four key steps, which we outline below and then establish rigorously.
%	We begin by recalling that, according to the second point of Choquet’s theorem, any element \( b \in B \) can be represented as
%	\[
%	b = \int_B f \, d\mu(f),
%	\]
%	where \( \mu \) is a probability measure supported on a subset \( \mathcal{E} \subset B \), and vanishes on \( B \setminus E \). This means that the integral effectively reduces to one over the set \( \mathcal{E} \), and \( b \) can be seen as a Bochner integral of functions in \( B \) thanks to Lemma \ref{l: Bochner}.	
%	Recall also the definition of the evaluation operator
%	$e_x(f) = f(x)$
%	as introduced in \eqref{eq: evaluation}. Since \( b \in B \subset L^\infty(\mathbb{R}^d) \), and almost every point \( x \in \mathbb{R}^d \) is a Lebesgue point of \( b \), it follows that
%	\[
%	b(x) = e_x(b) \quad \text{for a.e.\ } x \in \mathbb{R}^d.
%	\]
%	In order to rigorously establish the pointwise representation of \( b \), we now define the set
%	\[
%	C := \left\{ (f,x) \in B \times \mathbb{R}^d : \limsup_n u_n(f,x) = \liminf_n u_n(f,x) \right\},
%	\]
%	where we have denoted
%	\[
%	u_n(f,x) := \frac{1}{|B(x,1/n)|} \int_{B(x,1/n)} f(\omega) \, d\omega.
%	\]
%	The set \( C \) therefore captures all pairs \( (f,x) \) for which the sequence \( (u_n(f,x))_n \) converges, i.e., where \( x \) is a Lebesgue point for the function \( f \). Using a standard characterization of convergence, we may rewrite \( C \) as
%	\[
%	C = \bigcap_{q=1}^{\infty} \bigcup_{n=1}^{\infty} \bigcap_{p_1,p_2 \geq n} \left\{ (f,x) : |u_{p_1}(f,x) - u_{p_2}(f,x)| \leq \frac{1}{q} \right\}.
%	\]
%	We now present the strategy in four steps:\\
%	
\\
	{Step 1:} We will prove that the map $(f,x) \mapsto e_x(f)$ is bi-measurable, as in Point 1. 
	%We will prove that the set \( C \subset B \times \mathbb{R}^d \) is bi-measurable, i.e., measurable with respect to the product \( \sigma \)-algebra \( \mathcal{B}(B) \otimes \mathcal{B}(\mathbb{R}^d) \). \\
%	{Step 2:} We will use the measurability of \( C \) to apply Fubini’s theorem and construct a full-measure set \( \mathcal{L}_x \subset \mathbb{R}^d \), for which the map \( f \mapsto e_x(f) \) is well-defined and integrable with respect to \( \mu \). \\
	\\
	{Step 2:} We will prove the identity \eqref{eq : evalutation rigorous} for simple functions, specifically for the approximations \( I_d(M)(f) \) used to define the Bochner integral. 
	\\
	{Step 3:} We will pass to the limit in these approximations, applying dominated convergence to conclude the desired pointwise identity, concluding the proof of Point 2. 
	\\
	{Step 4:} We will obtain the negligibility of the set \eqref{eq : neg by Fubini}, as stated in Point 3, by using Fubini theorem.
	
	\textbf{Step 1:} Let us  denote
		\begin{equation} \label{eq : def approx of evaluation}
		u_n(f,x) := \frac{1}{|B(x,1/n)|} \int_{B(x,1/n)\cap\cX} f(y) \, dy.
		\end{equation}
	We have $e_x(f)=\limsup_{n \to \infty} u_n(f)$ for all $(f,x) \in \widehat{B}\times\cX$.  Hence, it is sufficient to prove that $(r,x) \mapsto u_n(f,x)$ is measurable.	
%	 In order to prove the desired bi-measurability of the map
%	\[
%	u_n : B \times X \to \mathbb{R}, \qquad (f,x) \mapsto u_n(f,x) := \frac{u_n^{(1)}(f,x)}{u_n^{(2)}(x)},
%	\]
	We analyze separately the continuity properties of the numerator \( u_n^{(1)}(r,x) \) and the denominator \( u_n^{(2)}(x)= |B(x,1/n)|\). We begin with
	\[
	u_n^{(1)}(f,x) = \int_{B(x, \frac{1}{n})\cap\cX} f(y) \, dy = \left\langle \mathbbm{1}_{B(x, \frac{1}{n})}, r \right\rangle_{L^1, L^\infty}.
	\]
	To show continuity of \( u_n^{(1)} \), consider a sequence \( (f_p, x_p) \to (r,x) \) as \( p \to \infty \), where \( f_p \to f \) in the \( d_\star \)-topology on \( \widehat{B} \), and \( x_p \to x \) in the Euclidean topology on \( \mathbb{R}^d \). We estimate
	\[
	|u_n^{(1)}(f,x) - u_n^{(1)}(f_p,x_p)| 
	\le \int_{\cX} \left| \mathbbm{1}_{B(x, \frac{1}{n})} - \mathbbm{1}_{B(x_p, \frac{1}{n})} \right| |f_p(y)| \, dy + \left| \int_{B(x, \frac{1}{n})\cap\cX} [f(y) - f_p(y)] \, dy \right| 
	= I_1 + I_2.
	\]
	Since \( f_p \xrightharpoonup{\star} f \) in \( L^\infty \), we have
	\[
	I_2 = \left\langle \mathbbm{1}_{B(x, \frac{1}{n})}, f - f_p \right\rangle \longrightarrow 0
	\quad \text{as } p \to \infty.
	\]
	To control \( I_1 \), observe that for all \( p \), \( f_p \in \widehat{B} \) and thus satisfy \( \|f_p\|_{L^\infty} \le e^{\alpha} \). Hence,
	\[
	I_1 \le e^{\alpha} \int_{\mathbb{R}^d} \left| \mathbbm{1}_{B(x, \frac{1}{n})} - \mathbbm{1}_{B(x_p, \frac{1}{n})} \right| \, dy,
	\]
	which tends to zero as \( x_p \to x \), since the symmetric difference of the balls \( B(x, \frac{1}{n}) \) and \( B(x_p, \frac{1}{n}) \) vanishes in measure.
	
	A similar argument applies to \( u_n^{(2)}(x) := |B(x, \frac{1}{n})| \), and we omit the details. We thus conclude that both \( u_n^{(1)} \) and \( u_n^{(2)} \) are continuous, and therefore \( (f,x)\mapsto u_n(f,x) \) is continuous as a quotient of continuous functions (with strictly positive denominator).
	
	Since \( u_n \) is continuous on \( \widehat{B} \times \cX \), it is measurable with respect to the Borel \( \sigma \)-algebra \( \mathcal{B}(\widehat{B} \times \cX) \). As both \( \widehat{B} \) and \( \cX \) are metric spaces, we have
	\[
	\mathcal{B}(\widehat{B} \times \mathbb{R}^d) = \mathcal{B}(\widehat{B}) \otimes \mathcal{B}(\mathbb{R}^d),
	\]
	so \( u_n \) is measurable with respect to the product \( \sigma \)-algebra. 
	This step proves the point 1) of the proposition.
	% This implies that the set \( C \subset B \times \mathbb{R}^d \), defined in terms of convergence of the sequence \( u_n \), is bi-measurable, as required. \\
	\\
	\\
	\textbf{Step 2:}  Let $\mu$ be a sub-probability on the space $\widehat{B}$ and assume that $b=\int_{\widehat{B}} f \mu(df)$. 	
	In this step, we aim to establish \eqref{eq : evalutation rigorous} but with $I_d^{(M)}(f)$, as defined in \eqref{eq: Id f} instead of $f$. 
	%Note that this time we are working on the space \( B \), rather than on \( \hat{B} \), and that \( I_d^{(M)}(f) \) is constructed using a countable dense sequence \( (f_n)_n \subset B \). Observe moreover that, since \( f \in B \), and \( I_d^{(M)}(f) \) is built from elements in \( B \), it follows that \( I_d^{(M)}(f) \in B \) as well.
	
	Recall from \eqref{eq: int Id f} that
	\[
	\int_{\widehat{B}} I_d^{(M)}(f) \, \mu(df) = \sum_{n=0}^\infty f_n \, \mu\left(B_n^{(M)}\right),
	\]
	where the equality holds in \( L^\infty(\cX) \). This implies that for almost every \( x \in \cX \),
	\[
	\left( \int_{\widehat{B}} I_d^{(M)}(f) \, \mu(df) \right)(x) = \sum_{n=0}^\infty f_n(x) \, \mu\left(B_n^{(M)}\right).
	\]
	Moreover, since for almost every \( x \in \cX \) it holds that
	\[
	I_d^{(M)}(f)(x) = \sum_{n=0}^\infty f_n(x) \, \mathbbm{1}_{B_n^{(M)}}(f),
	\]
	we have
	\[
	\int_{\widehat{B}} I_d^{(M)}(f)(x) \, \mu(df) = \sum_{n=0}^\infty f_n(x) \, \mu\left(B_n^{(M)}\right),
	\]
	so that
	\[
	\left( \int_{\widehat{B}} I_d^{(M)}(f) \, \mu(df) \right)(x) = \int_{\widehat{B}} I_d^{(M)}(f)(x) \, \mu(df).
	\]
	This is precisely the result we intended to prove, but for the case of simple functions. The goal now is to extend this identity to the Bochner integral
	\[
	\int_{\widehat{B}} f \, \mu(df),
	\]
	which will be the focus of the next step. %(and final) step. \\
	\\
	\\
	\textbf{Step 3.} Let us introduce the following notations:
	\[
	u^{(M)} := \int_{\widehat{B}} I_d^{(M)}(f) \, \mu(df) ,  \quad u:= \int_{\widehat{B}} f \, \mu(df).
	\]
	From equation \eqref{eq : approx Bochner d star} following the proof of Lemma \ref{l: Bochner}, we have $u^{(M)} \xrightharpoonup{\star} u$ as $M \rightarrow \infty$.
%	
%	\[
%	u^{(M)} := \int_{\hat{B}} I_d^{(M)}(f) \, \mu(df) \xrightharpoonup{\star} \int_{\hat{B}} f \, \mu(df) =: u
%	\quad \text{as } M \rightarrow \infty.
%	\]
%	

Thanks to this convergence, we have, for any \( h \in L^1(\cX) \cap L^\infty(\cX) \),
	\begin{align}\nonumber
		\int_{\cX} h(x) u(x) \, dx 
		&= \lim_{M \rightarrow \infty} \int_{\cX} h(x) u^{(M)}(x) \, dx \\
	\nonumber
		&= \lim_{M \rightarrow \infty} \int_{\cX} h(x) \left( \int_{\widehat{B}} I_d^{(M)}(f) \, \mu(df) \right)(x) \, dx \\
		\label{eq : h scalaire u proof prop eval}
		&= \lim_{M \rightarrow \infty} \left( \int_{\cX} \int_{\widehat{B}} h(x) I_d^{(M)}(f)(x) \, \mu(df)\right) \, dx,
	\end{align}
	where in the last equality we used Step 2.
	
	From the definition of \( I_d^{(M)}(f) \), it follows that the function
	\[
	(x, f) \mapsto h(x) I_d^{(M)}(f)(x) = \sum_{n=1}^\infty h(x) f_n(x) \, \mathbbm{1}_{B_n^{(M)}}(f)
	\]
	is bi-measurable and bounded. Therefore, we can apply Fubini's theorem to obtain from \eqref{eq : h scalaire u proof prop eval}:
	\[
	\int_{\cX} h(x) u(x) \, dx=	\lim_{M \rightarrow \infty} \int_{\widehat{B}} \left( \int_{\cX } h(x) I_d^{(M)}(f)(x) \, dx \right) \mu(df).
	\]
	Now, for each \( f \in \widehat{B} \), we have \( I_d^{(M)}(f) \xrightharpoonup{\star} f \) as \( M \to \infty \), which implies
	\[
	\int_{\cX} h(x) I_d^{(M)}(f)(x) \, dx \longrightarrow \int_{\cX} h(x) f(x) \, dx.
	\]
	Moreover, since \( I_d^{(M)}(f) \in \widehat{B} \), this quantity is uniformly bounded by \( \|h\|_{L^1(\cX)} e^\alpha \). On the other hand, the right-hand side above equals \( \int_{\cX} h(x) e_x(f) \, dx \), because \( f(x) = e_x(f) \) almost everywhere.
	%, and this expression is well-defined by Step 2.
	 Hence, we may apply the dominated convergence theorem to deduce:
	\begin{align*}
		\lim_{M \rightarrow \infty} \int_{\widehat{B}} \left( \int_{\cX} h(x) I_d^{(M)}(f)(x) \, dx \right) \mu(df) 
		&= \int_{\widehat{B}} \left( \int_{\cX} h(x) f(x) \, dx \right) \mu(df) \\
		&= \int_{\widehat{B}} \left( \int_{\cX} h(x) e_x(f) \, dx \right) \mu(df).
	\end{align*}
	Since the map \( (x, f) \mapsto h(x) e_x(f) \) is bi-measurable and bounded, we can again apply Fubini's theorem to conclude:
	\[
	\int_{\cX} h(x) u(x) \, dx = \int_{\cX} h(x) \left( \int_{\widehat{B}} e_x(f) \, \mu(df) \right) dx.
	\]
	As this equality holds for every \( h \in L^1(\cX) \cap 
	L^\infty(\cX) \), we deduce that for almost every \( x \in  \cX \),
	\[
	u(x) = \int_{\widehat{B}} e_x(f) \, \mu(df),
	\]
	which is precisely the pointwise identity we set out to rigorously establish. The point 2) of the proposition is obtained.
	\\
	\\
	\\
	\textbf{Step 4:} In this final step, we show the third point of the proposition.
	%In order to rigorously establish the pointwise representation of \( b \), 
	We define the set
		\[
		R := \left\{ (f,x) \in \widehat{B} \times \cX : \limsup_n u_n(f,x) = \liminf_n u_n(f,x) \right\},
		\]
		where we recall that $u_n$ is defined in \eqref{eq : def approx of evaluation}.
	%	\[
	%	u_n(f,x) := \frac{1}{|B(x,1/n)|} \int_{B(x,1/n)} f(\omega) \, d\omega.
	%	\]
		The set \( R \) therefore captures all pairs \( (f,x) \) for which the sequence \( (u_n(f,x))_n \) converges, which is for instance the case when \( x \) is a Lebesgue point for the function \( f \). Using a standard characterization of convergence, we may rewrite \( R \) as
		\[
		R = \bigcap_{q=1}^{\infty} \bigcup_{n=1}^{\infty} \bigcap_{p_1,p_2 \geq n} \left\{ (f,x) : |u_{p_1}(f,x) - u_{p_2}(f,x)| \leq \frac{1}{q} \right\},
		\]
	which establishes the measurability of the set $R$ from those of $u_n$.
	
	We analyze the integral
	\[
	\int_{\cX} \int_{\widehat{B}} \mathbbm{1}_{R^c}(f,x) \, \mu(df) \, dx.
	\]
	Thanks to the measurability,  we may apply Fubini’s theorem to exchange the order of integration:
	\[
	\int_{\cX} \int_{\widehat{B}} \mathbbm{1}_{R^c}(f,x) \, \mu(df) \, dx = \int_{\widehat{B}} \int_{\cX} \mathbbm{1}_{R^c}(f,x) \, dx \, \mu(df).
	\]
	As already discussed, for every \( f \in \widehat{B} \), almost every \( x \in \cX  \) is a Lebesgue point of \( f \). This implies
	\[
	\int_{\cX} \mathbbm{1}_{R^c}(f,x) \, dx \le \int_{\cX} \mathbbm{1}_{\{x \colon x \text{ is not a Lebesgue point of } f\}} \, dx = 0.
	\]
	Thus, we conclude that
	\[
	\int_{\cX} \int_{\widehat{B}}\mathbbm{1}_{R^c}(f,x) \, \mu(df) \, dx = 0.
	\]
	From the definition of the set $R$, this proves the third point of the proposition.
%	
%	Now, define the function
%	\[
%	n(x) := \int_{\widehat{B}} \mathbbm{1}_{R^c}(f,x) \, \mu(df).
%	\]
%	This function is measurable, non-negative, and satisfies \( \int_{\mathbb{R}^d} n(x) \, dx = 0 \), which implies \( n(x) = 0 \) for almost every \( x \in \mathbb{R}^d \).
%	We then introduce the set
%	\[
%	\mathcal{L}_x := \left\{ x \in \mathbb{R}^d : n(x) = 0 \right\}.
%	\]
%	This set \( \mathcal{L}_x \) is precisely the one referred to at the beginning of the proof, in the description of Step 2. Indeed, for any \( x \in \mathcal{L}_x \), the definition of \( n(x) \) gives
%	\[
%	\mu\left( \left\{ f \in B : (u_n(f,x))_n \text{ does not converge} \right\} \right) = 0.
%	\]
%	That is, for almost every \( f \in B \), the sequence \( u_n(f,x) \) converges.
%	
%	Finally, we verify that for all \( x \in \mathcal{L}_x \), the integral
%	\[
%	\int_B e_x(f) \, \mu(df)
%	\]
%	is well defined. This follows from the fact that the map
%	\[
%	e_x : B \to \mathbb{R}, \qquad f \mapsto e_x(f) = f(x)
%	\]
%	is measurable and bounded on \( B \). Hence, the Bochner integral \( \int_B e_x(f) \, \mu(df) \) exists for all \( x \in \mathcal{L}_x \), concluding Step 2. \\
%	\\
%	
	\medskip
	
	\noindent This completes the proof.
	
\end{proof}

\subsection{Proof of other lemmas}
We conclude by proving Lemma~\ref{lem : determination max fini}, as stated in Section~\ref{s: proof main} within the proofs of the main results.

\subsubsection{Proof Lemma \ref{lem :  determination max fini}}
\begin{proof}
By definition \eqref{eq : def nplus}, we have $n_\beta^+ \in [0,1]$ for all $\beta$, so we can upper and lower bound the denominator of $i_\beta$, obtaining:
\begin{equation} \label{eq : deno not so important fini}
	e^{-\alpha} \left( \int_\cX s_{\theta}(x) \indi{F^+_\beta}(x) p_\theta(x) \mu(dx) \right)^2
	\le i_\beta \le 
	\left( \int_\cX s_{\theta}(x) \indi{F^+_\beta}(x) p_\theta(x) \mu(dx) \right)^2.
\end{equation}
Regarding the numerator of $i_\beta$, let us recall
\[
F_\text{max}^+ = \{x \in \mathcal{X} \mid s_\theta(x) > 0\}, \quad
F_\text{max}^{\prime +} = \{x \in \mathcal{X} \mid s_\theta(x) < 0\}.
\]
Since $s_\theta(x) \neq 0$ for all $x \in \cX$ by assumption, we have 
\[
F_\text{max}^+ \cup F_\text{max}^{\prime +} = \cX.
\]
Recalling that the score function is centered, i.e., 
\[
\int_\cX s_\theta(x) p_\theta(x) \mu(dx) = 0,
\]
we deduce that:
\begin{equation} \label{eq: symmetry 12.5}
	\int_\cX s_{\theta}(x) \indi{F^+_\text{max}}(x) p_\theta(x) \mu(dx)
	= - \int_\cX s_{\theta}(x) \indi{F^{\prime +}_\text{max}}(x) p_\theta(x) \mu(dx)
	= \frac{1}{2} \int_\cX \abs{s_{\theta}(x)} p_\theta(x) \mu(dx).
\end{equation}
Since $\{F_\text{max}^+, F_\text{max}^{\prime +}\}$ is a partition of $\cX$, for any $F \subset \cX$, we can write:
\[
\int_F s_\theta(x) p_\theta(x) \mu(dx) 
= \int_{F \cap F^+_\text{max}} s_\theta(x) p_\theta(x) \mu(dx)
+ \int_{F \cap F^{\prime +}_\text{max}} s_\theta(x) p_\theta(x) \mu(dx).
\]
This implies:
\[
\int_F s_\theta(x) p_\theta(x) \mu(dx) \in 
\left[ -\frac{1}{2} \int_\cX \abs{s_{\theta}(x)} p_\theta(x) \mu(dx), 
        \frac{1}{2} \int_\cX \abs{s_{\theta}(x)} p_\theta(x) \mu(dx) \right],
\]
and consequently:
\begin{equation} \label{eq : iF smaller iFmax}
	\left( \int_F s_\theta(x) p_\theta(x) \mu(dx) \right)^2
	\le \frac{1}{4} \left( \int_\cX \abs{s_{\theta}(x)} p_\theta(x) \mu(dx) \right)^2.
\end{equation}
Moreover, by assumption $s_\theta(x) \neq 0$ for all $x \in \cX$ and from \eqref{eq: symmetry 12.5}, equality in \eqref{eq : iF smaller iFmax} holds if and only if $F = F_\text{max}^+$ or $F = F_\text{max}^{\prime +}$.

Since the collection of sets $\{F^+_\beta\}_{\beta=0,\dots,2^d-1}$ introduced in \eqref{eq: Fbeta+ 4.8} describes all subsets of $\cX$, there exist indices $\beta_\text{max}, \beta'_\text{max}$ such that
\[
F_{\beta_\text{max}}^+ = F_\text{max}^+, \quad 
F_{\beta'_\text{max}}^+ = F_\text{max}^{\prime +}.
\]
Therefore, the function
\[
\beta \mapsto \left( \int_\cX s_\theta(x) \indi{F^+_\beta}(x) p_\theta(x) \mu(dx) \right)^2
\]
reaches its maximum value only at $\beta = \beta_\text{max}$ and $\beta = \beta'_\text{max}$.
As a consequence, using the bounds in \eqref{eq : deno not so important fini}, for $\alpha$ sufficiently small (say, $\alpha < \overline{\alpha}$), the function $\beta \mapsto i_\beta$ reaches its maximum on $\{\beta_\text{max}, \beta'_\text{max}\}$.
An explicit constraint on $\overline{\alpha}$ is given by:
\[
e^{\overline{\alpha}} 
< \sup_{\beta \notin \{\beta_\text{max}, \beta'_\text{max}\}} 
\frac{
	\left( \int_{F^+_{\beta_\text{max}}} s_\theta(x) p_\theta(x) \mu(dx) \right)^2}{
	\left( \int_{F^+_\beta} s_\theta(x) p_\theta(x) \mu(dx) \right)^2}.
\]
Recalling the definition \eqref{eq : def i beta}, and the fact that $n_\beta^+$ appears in the denominator of $i_\beta$, we observe that:
\begin{itemize}
    \item If $n^+_{\beta_{\text{max}}} \leq n^+_{\beta'_{\text{max}}}$, then $i_{\beta_{\text{max}}} \geq i_{\beta'_{\text{max}}}$,
    \item If $n^+_{\beta_{\text{max}}} \geq n^+_{\beta'_{\text{max}}}$, then $i_{\beta_{\text{max}}} \leq i_{\beta'_{\text{max}}}$,
\end{itemize}
since the numerators are equal for both indices. Assume without loss of generality that $n^+_{\beta_{\text{max}}} \le n^+_{\beta'_{\text{max}}}$. Then we have:
\[
i_{\beta_{\text{max}}} \ge i_{\beta'_{\text{max}}} > i_\beta, \quad \forall \beta \notin \{\beta_\text{max}, \beta'_\text{max}\}.
\]
From the definition \eqref{eq : def nplus} of $n_\beta^+$, the explicit expressions for the values at the maxima are:
\begin{equation} \label{eq : explicit i iprime}
	\begin{aligned}
		i_{\beta_{\text{max}}}
		&= \frac{\frac{1}{4} \left( \int_\cX \abs{s_{\theta}(x)} p_\theta(x) \mu(dx) \right)^2}
		{1 + (e^\alpha - 1) \int_{F^+_\text{max}} p_\theta(x) \mu(dx)}, \\
		i_{\beta'_{\text{max}}}
		&= \frac{\frac{1}{4} \left( \int_\cX \abs{s_{\theta}(x)} p_\theta(x) \mu(dx) \right)^2}
		{1 + (e^\alpha - 1) \int_{F^{\prime +}_\text{max}} p_\theta(x) \mu(dx)}.
	\end{aligned}
\end{equation}
 
 Now, the rest of the proof consists in showing that the maximization problem \eqref{eq : optimization pb finite} has a maximum that is attained for a sub-probability measure $(\omega_\beta)_{\beta}$ supported on $\{\beta_\text{max},\beta'_\text{max}\}$.

Let us denote by $R$ the staircase pattern matrix of size $d \times 2^d$ as defined in Definition~\ref{def: staircase matrix}. It satisfies
\[
R_{j,\beta} = (r_\beta)_j = e^\alpha \indi{F^+_\beta}(x_j) + \indi{F^-_\beta}(x_j).
\]
This matrix has columns given by the vectors defined in \eqref{eq : R matrix by slice}. Then, the optimization problem \eqref{eq : optimization pb finite} can be rewritten as
\[
M^* = \max_{\omega \in \R^{2^d}_+} \omega^T i \quad \text{subject to} \quad R \omega = \bm{1},
\]
where $i = (i_\beta)_\beta$ and $\bm{1} = [1, \dots, 1]^T \in \R^d$. This problem admits a solution since the constraint defines a compact set. Let $\overline{\omega}$ be a maximizing solution. Define 
\[
q = \#\left\{ \beta \in \{0, \dots, 2^d - 1\} \mid \overline{\omega}_\beta > 0 \right\}.
\]
We distinguish three cases depending on the value of $q$.

\paragraph{$\bullet$ Case $q = 1$.}
In this case, 
%the constraint $R\overline{\omega} = \bm{1}$ implies that
 $\overline{\omega}$ is
 supported on a single index $\bar{\beta}$. The constraint becomes:
\begin{equation}\label{eq: system q=1}
\begin{cases}
1 = (r_{\bar{\beta}})_1 \bar{\omega}_{\bar{\beta}} \\
\vdots \\
1 = (r_{\bar{\beta}})_d \bar{\omega}_{\bar{\beta}}.
\end{cases}
\end{equation}
From the definition of $r_\beta$, for $\bar{\beta} = 0$ we have $(r_{\bar{\beta}})_j = 1$ for all $j$, and for $\bar{\beta} = 2^d - 1$, we have $(r_{\bar{\beta}})_j = e^\alpha$ for all $j$. For other values of $\bar{\beta}$, the vector $r_{\bar{\beta}}$ has mixed entries in $\{1, e^\alpha\}$. Thus, the system~\eqref{eq: system q=1} has a solution only if $\bar{\beta} \in \{0, 2^d - 1\}$. However, for these values, the objective coefficients vanish:
\[
i_\beta = \frac{\left( \int_{F^+_\beta} s_{\theta}(x) p_\theta(x)\mu(dx)\right)^2}{1 + (e^\alpha - 1) n^+_\beta} = 0,
\]
since $F^+_0 = \emptyset$ and $F^+_{2^d - 1} = \mathcal{X}$, and the score function integrates to $0$ over the full space. Hence, $M^* = 0$. We will show that for $q = 2$, $M^* > 0$, so the maximum cannot be attained with $q = 1$.

\paragraph{$\bullet$ Case $q = 2$.}
Assume there exist $0 \le \beta_1 < \beta_2 \le 2^d - 1$ such that $\overline{\omega}_{\beta_1}, \overline{\omega}_{\beta_2} > 0$ and $\overline{\omega}_\beta = 0$ for all other $\beta$. Exclude the case $\beta_1 = 0$, $\beta_2 = 2^d - 1$, since that would again imply $M^* = 0$.

Because $r_{\beta_1} \ne r_{\beta_2}$, there exists $x \in \mathcal{X}$ such that $R_{x,\beta_1} \ne R_{x,\beta_2}$. Without loss of generality, assume $R_{x,\beta_1} = 1$, $R_{x,\beta_2} = e^\alpha$. Since $\beta_1 \ne 0$, we can find $y \in \mathcal{X}$ such that $R_{y,\beta_1} = e^\alpha$ (as $r_0$ is the only vector with all entries equal to $1$). The constraint $R \overline{\omega} = \bm{1}$ yields:
\begin{equation}\label{eq: system 14.5}
\left\{
\begin{aligned}
\overline{\omega}_{\beta_1} + e^\alpha \overline{\omega}_{\beta_2} &= 1 \\
e^\alpha \overline{\omega}_{\beta_1} + R_{y,\beta_2} \overline{\omega}_{\beta_2} &= 1.
\end{aligned}
\right.
\end{equation}
To ensure a solution with both $\overline{\omega}_{\beta_1}, \overline{\omega}_{\beta_2} > 0$, it must be that $R_{y,\beta_2} = 1$. Then the solution is
\[
\overline{\omega}_{\beta_1} = \overline{\omega}_{\beta_2} = \frac{1}{1 + e^\alpha},
\]
and the objective function yields
\[
M^* = \frac{i_{\beta_1} + i_{\beta_2}}{1 + e^\alpha}.
\]
This value is maximal when $i_{\beta_1}$ and $i_{\beta_2}$ are the two largest among the $(i_\beta)_\beta$, hence:
\begin{equation}\label{eq : maximume q is 2}
M^* = \frac{i_{\beta_{\max}} + i_{\beta'_{\max}}}{1 + e^\alpha}.
\end{equation}
Using the explicit form \eqref{eq : explicit i iprime} and the identity
\[
\int_{F^{\prime +}_{\max}} p_\theta(x)\mu(dx) = 1 - \int_{F^{+}_\text{max}} p_\theta(x)\mu(dx),
\]
we deduce that
\[
i_{\beta_{\max}} + i_{\beta'_{\max}} = \frac{1}{4} \mathbb{E}[|s_\theta(x)|]^2 \cdot \frac{1 + e^\alpha}{\left(1 + (e^\alpha - 1)n_{\max}\right)\left(1 + (e^\alpha - 1)(1 - n_{\max})\right)}.
\]
Hence,
\[
M^* = \frac{1}{4} \mathbb{E}[|s_\theta(x)|]^2 \cdot \frac{1}{\left(1 + (e^\alpha - 1)n_{\max}\right)\left(1 + (e^\alpha - 1)(1 - n_{\max})\right)},
\]
which gives the desired expression \eqref{eq : def M star}, noting that
\[
1 + (e^\alpha - 1)(1 - n_{\max}) = e^\alpha(1 - n_{\max}) + n_{\max}.
\]

\paragraph{$\bullet$ Case $q \ge 3$.}
We will show that if $\alpha$ is small enough, then the maximum cannot be attained, leading to a contradiction and implying that the assumption $q \ge 3$ is false.
Let $\beta_{1}, \dots, \beta_{q} \in \{0,\dots,2^d-1\}$ be the indices such that $\overline{\omega}_{\beta_{l}} > 0$ for $l = 1,\dots,q$.

We divide the analysis into several subcases:

\subparagraph{Subcase 1:} Assume that $\{\beta_{\max}, \beta'_{\max}\} \cap \{\beta_{1},\dots,\beta_{q}\} = \emptyset$.  
That is, the support of $\overline{\omega}$ is disjoint from the two indices where $i_\beta$ is maximal. Define
\[
i_{(3)} := \max\left\{ i_{\beta} \mid \beta \in \{0,\dots,2^d-1\} \setminus \{\beta_{\max}, \beta'_{\max} \} \right\}.
\]
Since $i_{\beta_{\max}} \ge i_{\beta'_{\max}} > i_{(3)}$, we obtain
\[
\overline{\omega}^T i = \sum_{l=1}^q \overline{\omega}_{\beta_l} i_{\beta_l} \le i_{(3)} \sum_{l=1}^q \overline{\omega}_{\beta_l} \le i_{(3)},
\]
where we used that $\sum_{\beta=0}^{2^d-1} \overline{\omega}_\beta \le 1$ due to the constraint $R\overline{\omega} = \bm{1}$.  
If $\alpha$ is small enough so that
\[
i_{(3)} \le \frac{i_{\beta_{\max}} + i_{\beta'_{\max}}}{1 + e^\alpha},
\]
then this contradicts the optimality of this subcase, as the value is strictly less than the value achieved in the case $q = 2$ (see \eqref{eq : maximume q is 2}).

\subparagraph{Subcase 2:} Assume $i_{\beta_{\max}} > i_{\beta'_{\max}}$ and $\beta_{\max} \notin \{\beta_1, \dots, \beta_q\}$.  
Then all $\beta_l$ satisfy $i_{\beta_l} \le i_{\beta'_{\max}}$, since the only one potentially exceeding it is excluded. Thus,
\[
\overline{\omega}^T i \le i_{\beta'_{\max}} \sum_{l=1}^q \overline{\omega}_{\beta_l} \le i_{\beta'_{\max}}.
\]
We compare with the maximum value for $q = 2$:
\[
i_{\beta'_{\max}} - \frac{i_{\beta_{\max}} + i_{\beta'_{\max}}}{1 + e^\alpha} 
= \frac{e^\alpha i_{\beta'_{\max}} - i_{\beta_{\max}}}{1 + e^\alpha},
\]
which is negative when $\alpha < \log\left( \frac{i_{\beta_{\max}}}{i_{\beta'_{\max}}} \right)$.  
Hence, for such small $\alpha$, the maximum cannot be attained in this subcase.

\subparagraph{Subcase 3:} Assume that both $\beta_{\max}$ and $\beta'_{\max}$ belong to $\{\beta_1, \dots, \beta_q\}$.  
Without loss of generality, relabel such that $\beta_1 = \beta_{\max}$ and $\beta_2 = \beta'_{\max}$.  

Since $F^+_{\max} \cup F^{\prime +}_{\max} = \mathcal{X}$, there exist points $x \in F^+_{\max} = F^+_{\beta_1}$ and $y \in F^{\prime +}_{\max} = F^+_{\beta_2} = \mathcal{X} \setminus F^+_{\beta_1}$.  
From the definition of the sets $F^+_\beta$, we have:
\[
R_{x,\beta_1} = e^\alpha, \quad R_{x,\beta_2} = 1, \quad R_{y,\beta_1} = 1, \quad R_{y,\beta_2} = e^\alpha.
\]
Writing the corresponding rows of $R \overline{\omega} = \bm{1}$ at points $x$ and $y$ gives:
\begin{equation} \label{eq : syst 2x2 subcase3}
\left\{
\begin{aligned}
e^\alpha \overline{\omega}_{\beta_1} + \overline{\omega}_{\beta_2} + \sum_{l=3}^q R_{x,\beta_l}\overline{\omega}_{\beta_l} &= 1, \\
\overline{\omega}_{\beta_1} + e^\alpha \overline{\omega}_{\beta_2} + \sum_{l=3}^q R_{y,\beta_l}\overline{\omega}_{\beta_l} &= 1.
\end{aligned}
\right.
\end{equation}

Define $s_x := \sum_{l=3}^q R_{x,\beta_l}\overline{\omega}_{\beta_l}$ and $s_y := \sum_{l=3}^q R_{y,\beta_l}\overline{\omega}_{\beta_l}$.  
Solving \eqref{eq : syst 2x2 subcase3} for $\overline{\omega}_{\beta_1}$ and $\overline{\omega}_{\beta_2}$ gives:
\begin{equation} \label{eq : expression omega subcase 3}
\overline{\omega}_{\beta_1} = \frac{1}{1 + e^\alpha} - \frac{s_x e^\alpha - s_y}{e^{2\alpha} - 1}, \quad
\overline{\omega}_{\beta_2} = \frac{1}{1 + e^\alpha} - \frac{s_y e^\alpha - s_x}{e^{2\alpha} - 1}.
\end{equation}

Define $\sigma := \sum_{l=3}^q \overline{\omega}_{\beta_l} > 0$.  
Since all entries of $R$ are either $1$ or $e^\alpha$, we have:
\[
\sigma \le s_x \le e^\alpha \sigma, \quad \sigma \le s_y \le e^\alpha \sigma.
\]
In particular, both $s_x e^\alpha - s_y \ge 0$ and $s_y e^\alpha - s_x \ge 0$. We now compute:
\begin{align*}
\overline{\omega}^T i &= \overline{\omega}_{\beta_1} i_{\beta_1} + \overline{\omega}_{\beta_2} i_{\beta_2} + \sum_{l=3}^q \overline{\omega}_{\beta_l} i_{\beta_l} \\
&= \overline{\omega}_{\beta_1} i_{\beta_{\max}} + \overline{\omega}_{\beta_2} i_{\beta'_{\max}} + \sum_{l=3}^q \overline{\omega}_{\beta_l} i_{\beta_l} \\
&= \frac{i_{\beta_{\max}} + i_{\beta'_{\max}}}{1 + e^\alpha}
- \frac{s_x e^\alpha - s_y}{e^{2\alpha} - 1} i_{\beta_{\max}} 
- \frac{s_y e^\alpha - s_x}{e^{2\alpha} - 1} i_{\beta'_{\max}} 
+ \sum_{l=3}^q \overline{\omega}_{\beta_l} i_{\beta_l} \\
&\le \frac{i_{\beta_{\max}} + i_{\beta'_{\max}}}{1 + e^\alpha}
- \frac{s_x + s_y}{e^\alpha + 1} i_{\beta'_{\max}} 
+ \sum_{l=3}^q \overline{\omega}_{\beta_l} i_{\beta_l}, \quad 
\text{as $i_{\beta_{\max}} \ge i_{\beta'_{\max}}$,} \\
&\le \frac{i_{\beta_{\max}} + i_{\beta'_{\max}}}{1 + e^\alpha}
+ \sigma \left( i_{(3)} - 2 \frac{i_{\beta'_{\max}}}{e^\alpha + 1} \right),
\end{align*}
where we used $s_x + s_y \ge 2 \sigma$ and $i_{\beta_l} \le i_{(3)}$ for $l \ge 3$.

As $i_{(3)} < i_{\beta'_{\max}}$, choosing $\alpha$ small enough ensures that 
\[
i_{(3)} - 2 \frac{i_{\beta'_{\max}}}{e^\alpha + 1} < 0,
\]
thus the final term is negative and the value of $\overline{\omega}^T i$ is strictly less than $\frac{i_{\beta_{\max}} + i_{\beta'_{\max}}}{1 + e^\alpha}$, hence the maximum is not attained in this subcase.

\subparagraph{Subcase 4:} The complement of the previous subcases.  
By exclusion of the subcases 1 and 3, exactly one of $\beta_{\max}$ or $\beta'_{\max}$ appears among $\{\beta_1, \dots, \beta_q\}$.
%Assume that exactly one of $\beta_{\max}$ or $\beta'_{\max}$ appears among $\{\beta_1, \dots, \beta_q\}$ and the assumptions of Subcases 1–3 do not hold.  
From exclusion of the subcase 2, there exists an index $l_1$ such that
\[
i_{\beta_{l_1}} = \max_{\beta} i_\beta = i_{\beta_{\max}}, \quad \text{and } i_{\beta_l} \le i_{(3)} \text{ for all } l \neq l_1.
\]

The proof proceeds analogously to Subcase 3. One extracts a $2 \times 2$ system from $R \overline{\omega} = \bm{1}$ involving $\beta_{l_1}$ and some $\beta_{l_2} \neq \beta_{l_1}$, provided the geometric condition
\begin{equation} \label{eq : subcase 4 finite cond on F}
F^+_{\beta_{l_1}} \cap \left(F^+_{\beta_{l_2}}\right)^c \neq \emptyset, \quad 
\left(F^+_{\beta_{l_1}}\right)^c \cap F^+_{\beta_{l_2}} \neq \emptyset,
\end{equation}
holds. This is stated and proven in Lemma~\ref{lem : for subcase 4 finite cond on F} below.

In particular, let $x \in \left( F^{+}_{\beta_{l_2}} \right)^c \cap F^+_{\beta_{l_1}}$ and $y \in F^+_{\beta_{l_2}} \cap \left( F^{+}_{\beta_{l_1}} \right)^c$.  
By the definition of the sets $F^+_\beta$, we have:
\[
R_{x,\beta_{l_1}} = e^\alpha, \quad R_{x,\beta_{l_2}} = 1, \quad R_{y,\beta_{l_1}} = 1, \quad R_{y,\beta_{l_2}} = e^\alpha.
\]

We now write the two equations of the system $R\overline{\omega} = \bm{1}$ corresponding to the rows indexed by $x$ and $y$. Similarly to \eqref{eq : syst 2x2 subcase3}, we obtain:
\begin{equation*}
\left\{
\begin{aligned}
e^\alpha \, \overline{\omega}_{\beta_{l_1}} + \overline{\omega}_{\beta_{l_2}} 
+ \sum_{\substack{l=1 \\ l \notin \{l_1,l_2\}}}^q R_{x,\beta_l} \, \overline{\omega}_{\beta_l} &= 1, \\
\overline{\omega}_{\beta_{l_1}} + e^\alpha \, \overline{\omega}_{\beta_{l_2}} 
+ \sum_{\substack{l=1 \\ l \notin \{l_1,l_2\}}}^q R_{y,\beta_l} \, \overline{\omega}_{\beta_l} &= 1.
\end{aligned}
\right.
\end{equation*}

Let us denote the remaining sums by:
\[
\sigma_x := \sum_{\substack{l=1 \\ l \notin \{l_1,l_2\}}}^q R_{x,\beta_l} \, \overline{\omega}_{\beta_l}, \qquad 
\sigma_y := \sum_{\substack{l=1 \\ l \notin \{l_1,l_2\}}}^q R_{y,\beta_l} \, \overline{\omega}_{\beta_l}.
\]

Then, as in \eqref{eq : expression omega subcase 3}, we deduce:
\begin{equation*}
\overline{\omega}_{\beta_{l_1}} = \frac{1}{1 + e^\alpha} - \frac{\sigma_x e^\alpha - \sigma_y}{e^{2\alpha} - 1}, \qquad
\overline{\omega}_{\beta_{l_2}} = \frac{1}{1 + e^\alpha} - \frac{\sigma_y e^\alpha - \sigma_x}{e^{2\alpha} - 1}.
\end{equation*}

We now compute:
\begin{align*}
\overline{\omega}^T i &= \overline{\omega}_{\beta_{l_1}} i_{\beta_{l_1}} + \overline{\omega}_{\beta_{l_2}} i_{\beta_{l_2}} 
+ \sum_{\substack{l=1 \\ l \notin \{l_1,l_2\}}}^q \overline{\omega}_{\beta_l} i_{\beta_l} \\
&\le \overline{\omega}_{\beta_{l_1}} i_{\beta_{\max}} + \overline{\omega}_{\beta_{l_2}} i_{\beta'_{\max}} 
+ \sum_{\substack{l=1 \\ l \notin \{l_1,l_2\}}}^q \overline{\omega}_{\beta_l} i_{\beta_l},
\end{align*}
where we used that $i_{\beta_{l_1}} = i_{\beta_{\max}}$ and $i_{\beta_{l_2}} \le i_{(3)} \le i_{\beta'_{\max}}$. Then, exactly as in Subcase~3, we deduce that
\[
\overline{\omega}^T i < \frac{i_{\beta_{\max}} + i_{\beta'_{\max}}}{1 + e^\alpha}.
\]
Consequently, the maximum $M^*$ cannot be attained in Subcase~4.
\medskip 

In all cases, we obtain a contradiction to the assumption that $q \ge 3$ yields the maximum. Hence, the maximum is reached only when $q = 2$.

\medskip

This concludes the proof, up to showing that the geometric condition \eqref{eq : subcase 4 finite cond on F} is satisfied for some $\beta_{l_2}$.  
This is verified in Lemma~\ref{lem : for subcase 4 finite cond on F} below.

\end{proof}

\begin{lem}\label{lem : for subcase 4 finite cond on F}
	There exists some $l_2 \in \{1,\dots,q\}$, $l_2\neq l_1$ such that,
	\begin{equation*} 
		\left( F^{+}_{\beta_{l_2}}\right)^c \cap  F^+_{\beta_{l_1}}  \neq \emptyset \quad \text{and} \quad 	F^+_{\beta_{l_2}} \cap \left( F^{+}_{\beta_{l_1}}\right)^c \neq \emptyset.
	\end{equation*}
\end{lem}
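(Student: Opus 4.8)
The plan is to argue by contradiction. Suppose no admissible $l_2$ exists; then for every $l_2 \in \{1,\dots,q\}$ with $l_2 \neq l_1$ the sets $F^+_{\beta_{l_1}}$ and $F^+_{\beta_{l_2}}$ are comparable for inclusion. Since the map $\beta \mapsto F^+_\beta$ enumerates all subsets of $\cX$ bijectively (as noted after Definition~\ref{def: staircase matrix}), the sets $F^+_{\beta_1},\dots,F^+_{\beta_q}$ are pairwise distinct, so each $l_2\neq l_1$ lies in exactly one of
\[
A := \{ l_2 \neq l_1 : F^+_{\beta_{l_1}} \subsetneq F^+_{\beta_{l_2}} \}, \qquad
B := \{ l_2 \neq l_1 : F^+_{\beta_{l_2}} \subsetneq F^+_{\beta_{l_1}} \},
\]
which yields a partition $\{1,\dots,q\} = \{l_1\} \sqcup A \sqcup B$.

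The decisive observation I would then exploit is a conservation law built into the feasibility constraint $R\overline{\omega} = \bm{1}$. Writing $(r_\beta)_x = 1 + (e^\alpha - 1)\indi{F^+_\beta}(x)$ and recalling that $\overline{\omega}$ is supported on $\{\beta_1,\dots,\beta_q\}$, the row of $R\overline{\omega}=\bm{1}$ indexed by $x\in\cX$ reads $\sum_{l=1}^q \overline{\omega}_{\beta_l} + (e^\alpha - 1)\sum_{l\,:\,x\in F^+_{\beta_l}} \overline{\omega}_{\beta_l} = 1$; hence the weighted overlap count $\Phi(x) := \sum_{l\,:\,x\in F^+_{\beta_l}} \overline{\omega}_{\beta_l}$ does not depend on $x$. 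In Subcase~4 we have $\beta_{l_1}\in\{\beta_{\max},\beta'_{\max}\}$, so $F^+_{\beta_{l_1}} \in \{F^+_{\max}, F^{\prime +}_{\max}\}$, and since $n_{\max}\in(0,1)$ by \eqref{eq: def nmax} both of these are nonempty proper subsets of $\cX$. I may therefore pick $x_0 \in F^+_{\beta_{l_1}}$ and $y_0 \in \cX\setminus F^+_{\beta_{l_1}}$ and write down $\Phi(x_0)=\Phi(y_0)$.

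Evaluating the two sides: for $x_0$, the index $l_1$ contributes (since $x_0\in F^+_{\beta_{l_1}}$), every $l_2\in A$ contributes (since $x_0\in F^+_{\beta_{l_1}}\subseteq F^+_{\beta_{l_2}}$), and possibly some $l_2\in B$, so $\Phi(x_0)\ge \overline{\omega}_{\beta_{l_1}} + \sum_{l_2\in A}\overline{\omega}_{\beta_{l_2}}$; for $y_0$, the index $l_1$ does not contribute, no $l_2\in B$ contributes (since $y_0\notin F^+_{\beta_{l_1}}\supseteq F^+_{\beta_{l_2}}$), and only some subset of $A$ may contribute, so $\Phi(y_0)\le \sum_{l_2\in A}\overline{\omega}_{\beta_{l_2}}$. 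Combining $\Phi(x_0)=\Phi(y_0)$ gives $\overline{\omega}_{\beta_{l_1}}\le 0$, contradicting $\overline{\omega}_{\beta_{l_1}}>0$. This contradiction proves the lemma.

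As for the main obstacle: there is no genuinely hard computation here. The crux is recognizing the invariant $\Phi(x)\equiv\mathrm{const}$ forced by $R\overline{\omega}=\bm{1}$, and then verifying that $F^+_{\beta_{l_1}}$ is a proper nonempty subset so that both $x_0$ and $y_0$ can be selected — for which I would lean on the identification $\beta_{l_1}\in\{\beta_{\max},\beta'_{\max}\}$ established in the proof of Lemma~\ref{lem : determination max fini} together with $n_{\max}\in(0,1)$. Once these two points are secured, the inclusion bookkeeping for $A$ and $B$ delivers the contradiction at once; I would just be careful that "comparable and distinct" is correctly read as "strict inclusion in one direction or the other."
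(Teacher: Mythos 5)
Your argument is correct and is essentially the paper's own proof in a slightly different dress: the invariant $\Phi(x)=\sum_{l\,:\,x\in F^+_{\beta_l}}\overline{\omega}_{\beta_l}$ is just an affine reparametrization of the weighted row sum $\sum_l R_{x,\beta_l}\overline{\omega}_{\beta_l}$, and your partition $A\sqcup B$ matches the paper's two inclusion cases; the paper derives the same contradiction by comparing $R_{x,\cdot}$ and $R_{y,\cdot}$ entrywise (with strict inequality only at $l_1$) rather than through $\Phi$, and it secures $\emptyset\neq F^+_{\beta_{l_1}}\subsetneq\cX$ via $M^*>0$ instead of $n_{\max}\in(0,1)$, but both justifications are equivalent here.
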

\begin{proof}
We proceed by contradiction. Suppose the statement does not hold. Then, for all $l \neq l_1$, we must have:
\begin{equation*} 
    \left( F^{+}_{\beta_l} \right)^c \cap F^+_{\beta_{l_1}} = \emptyset 
    \quad \text{or} \quad 
    F^+_{\beta_l} \cap \left( F^+_{\beta_{l_1}} \right)^c = \emptyset,
\end{equation*}
which implies:
\begin{equation} \label{eq: F subset 18.5}
    F^+_{\beta_{l_1}} \subset F^+_{\beta_l}
    \quad \text{or} \quad
    F^+_{\beta_l} \subset F^+_{\beta_{l_1}}.
\end{equation}
Let us choose some $x \in F^+_{\beta_{l_1}}$ and $y \in \left( F^+_{\beta_{l_1}} \right)^c$.  
This is possible because $F^+_{\beta_{l_1}} \neq \emptyset$ and $F^+_{\beta_{l_1}} \neq \cX$,  
otherwise we would have $i_{\beta_{\max}} = i_{\beta_{l_1}} = 0$, which contradicts $M^* > 0$.

By definition of the matrix $R$, we know that:
\[
R_{x,\beta_{l_1}} = e^\alpha, \qquad R_{y,\beta_{l_1}} = 1.
\]
Now fix any $l \in \{1, \dots, q\} \setminus \{l_1\}$. Then, one of the two alternatives in \eqref{eq: F subset 18.5} must occur:

- Case 1: If $F^+_{\beta_{l_1}} \subset F^+_{\beta_l}$, then since $x \in F^+_{\beta_{l_1}}$, we also have $x \in F^+_{\beta_l}$, hence:
\[
R_{x,\beta_l} = e^\alpha \ge R_{y,\beta_l} \in \{1, e^\alpha\}.
\]

- Case 2: If $F^+_{\beta_l} \subset F^+_{\beta_{l_1}}$, then since $y \notin F^+_{\beta_{l_1}}$, it follows that $y \notin F^+_{\beta_l}$, hence:
\[
R_{y,\beta_l} = 1 \le R_{x,\beta_l} \in \{1, e^\alpha\}.
\]
In both cases, we conclude:
\[
R_{y,\beta_l} \le R_{x,\beta_l} \quad \text{for all } l \in \{1, \dots, q\} \setminus \{l_1\}, \qquad
R_{y,\beta_{l_1}} = 1 < e^\alpha = R_{x,\beta_{l_1}}.
\]
This leads to a contradiction with the strict positivity of the weights $\overline{\omega}_{\beta_l}$ and the fact that:
\begin{equation*}
    \sum_{l=1}^q R_{y,\beta_l} \, \overline{\omega}_{\beta_l} = 1 = 
    \sum_{l=1}^q R_{x,\beta_l} \, \overline{\omega}_{\beta_l}.
\end{equation*}
The proof is therefore complete.

\end{proof}

\appendix

\end{document}